\title{Exchange graphs and Ext quivers}
\author{Alastair King}
\address{Mathematical Sciences, University of Bath,
  Bath BA2 7AY, U.K.}
\email{a.d.king@bath.ac.uk}
\author{Yu Qiu}
\address{Institutt for matematiske fag, NTNU,
 N-7491 Trondheim, Norway.}
\email{yu.qiu@bath.edu}
\newcommand{\New}[1]{{#1}}
\theoremstyle{plain}
\newtheorem{theorem}{Theorem}[section]
\newtheorem{lemma}[theorem]{Lemma}
\newtheorem{corollary}[theorem]{Corollary}
\newtheorem{proposition}[theorem]{Proposition}
\theoremstyle{definition}
\newtheorem{definition}[theorem]{Definition}
\newtheorem{example}[theorem]{Example}
\newtheorem{remark}[theorem]{Remark}
\numberwithin{equation}{section}
\newcommand{\StartEnum}{\begin{enumerate}[label=\arabic*{$^\circ$}.]}
\newcommand{\StopEnum}{\end{enumerate}}
\newcommand{\NN}{\mathbb{N}}
\newcommand{\ZZ}{\mathbb{Z}}
\newcommand{\QQ}{\mathbb{Q}}
\newcommand{\CC}{\mathbb{C}}
\newcommand{\hua}{\mathcal}
\renewcommand{\>}{\rangle}
\newcommand{\leftperp}[1]{{{}^\perp{#1}}}
\renewcommand{\mod}{\operatorname{mod}}
\newcommand{\Add}{\operatorname{Add}}
\newcommand{\Aut}{\operatorname{Aut}}
\newcommand{\Ind}{\operatorname{Ind}}
\newcommand{\Sim}{\operatorname{Sim}}
\newcommand{\Hom}{\operatorname{Hom}}
\newcommand{\End}{\operatorname{End}}
\newcommand{\Ext}{\operatorname{Ext}}
\newcommand{\Irr}{\operatorname{Irr}}
\newcommand{\diff}{\operatorname{d}}
\newcommand{\Br}{\operatorname{Br}}
\newcommand{\Proj}{\operatorname{Proj}}
\newcommand{\Cone}{\operatorname{Cone}}
\newcommand{\per}{\operatorname{per}}
\newcommand\Sph{\operatorname{Sph}}
\newcommand\FG{\operatorname{FG}}
\newcommand\gr{\operatorname{gr}}
\newcommand{\PSL}[2]{\operatorname{PSL}_{#1}(#2)}
\newcommand{\Grot}{K} 
\newcommand{\EG}{\operatorname{EG}}   
\newcommand{\EGp}{\operatorname{EG}^\circ}   
\newcommand{\C}[2]{\operatorname{\hua{C}}_{#1}(#2)}   
\newcommand{\shift}[1]{\operatorname{\Sigma}_{#1}}   
\newcommand{\CEG}[2]{\operatorname{CEG}_{#1}(#2)}   
\newcommand{\CEGun}[1]{\operatorname{CEG}^*(#1)}   
\newcommand{\D}{\operatorname{\hua{D}}}   
\newcommand{\twists}{\operatorname{Tw}}
\newcommand{\h}{\operatorname{\hua{H}}}   
\newcommand{\nh}{\operatorname{\widehat{\hua{H}}}}
\renewcommand{\k}{\mathbf{k}}
\newcommand{\Ho}[1]{\operatorname{\bf H}_{#1}}
\newcommand{\tilt}[3]{{#1}^{#2}_{#3}}
\newcommand{\torpr}[2]{\langle{#1},{#2}\rangle}
\newcommand{\qq}[1]{\operatorname{\Gamma}_{#1}Q}
\newcommand{\cc}[1]{\overline{#1}}
\newcommand{\shifts}[1]{#1[\ZZ]}
\newcommand{\zero}{\hua{H}_\Gamma}
\newcommand{\nzero}{\hua{H}_Q}
\newcommand{\cluster}{\pi}
\newcommand{\clu}{\cluster}
\newcommand{\cts}{Y}
\newcommand{\CTS}{\mathbf{Y}}
\newcommand{\fundom}[1]{\hua{S}_{#1}}
\newcommand{\Tri}{\bigtriangleup}
\newcommand{\imm}{\hua{L}}
\newcommand{\com}{\hua{J}}
\newcommand{\cov}{\alpha}
\newcommand{\covtil}{\widetilde{\cov}}
\newcommand{\Q}[1]{\mathcal{Q}(#1)}
\newcommand{\Qaug}[1]{\mathcal{Q}^+(#1)}
\newcommand{\Qext}[1]{\mathcal{Q}^{\varepsilon}(#1)}
\newcommand{\double}[2]{\Pi_{#2}(#1)}
\newcommand{\CY}[1]{CY-$#1$}
\begin{document}
\begin{abstract}
We study the oriented exchange graph
$\textrm{EG}^\circ(\Gamma_{N}\,Q)$
of reachable hearts in the finite-dimensional derived category
$\mathcal{D}(\Gamma_{N}\,Q)$
of the CY-$N$ Ginzburg algebra $\Gamma_{N}Q$
associated to an acyclic quiver $Q$.
We show that any such heart is induced from some heart in
the bounded derived category $\mathcal{D}(Q)$
via some `Lagrangian immersion'
$\mathcal{L}:\mathcal{D}(Q)\to\mathcal{D}(\Gamma_{N}\,Q)$.
We build on this to show that the quotient of
$\textrm{EG}^\circ(\Gamma_{N}\,Q)$
by the Seidel-Thomas braid group is the exchange graph
$\textrm{CEG}_{N-1}(Q)$ of cluster tilting sets in
the (higher) cluster category $\mathcal{C}_{N-1}(Q)$.
As an application, we interpret Buan-Thomas' coloured quiver
for a cluster tilting set in terms of the Ext quiver of
any corresponding heart in $\mathcal{D}(\Gamma_{N}\,Q)$.

    \vskip .3cm
    {\parindent =0pt
    \it Key words:} exchange graph, t-structure, Calabi-Yau category, higher cluster theory
\end{abstract}

\maketitle
\section{Introduction}

In this paper, we study three closely related triangulated categories
associated to an acyclic quiver $Q$.
These are the bounded derived category $\D(Q)$,
and, for any $N\geq 3$, the finite-dimensional derived category $\D(\qq{N})$ of
the \CY{N} Ginzburg algebra $\qq{N}$ (\cite{G})
and the cluster category $\C{N-1}{Q}$.
The later is defined as a quotient of $\D(Q)$ by a certain
cluster shift (see Section~\ref{sec:cluster} for details),
but may also be regarded as a quotient of
the perfect derived category $\per(\qq{N})$ by $\D(\qq{N})$,
using the natural \CY{N} version of Amiot's construction
(\cite{A},\cite{Guo1}).
In the case $N=3$, categories like $\D(\qq{N})$,
more generally associated to quivers with potential,
originally arose in studying the local geometry of
Calabi-Yau 3-folds,
hence the CY label.
They also arise on the other side of mirror symmetry;
for example, when $Q$ is of type $A_n$,
Khovanov, Seidel and Thomas (\cite{KS},\cite{ST}) identified
$\D(\qq{N})$ inside the derived Fukaya category of
the Milnor fibre of an $A_n$ singularity.

One key to understanding triangulated categories such as $\D(Q)$ and $\D(\qq{N})$
is to understand the collection of their (bounded) t-structures.
Each (bounded) t-structure carries an abelian category sitting inside it,
known as its heart.
The hearts of nearby t-structures are related by
tilting, in the sense of Happel-Reiten-Smal\o,
and the most elementary tilt is with respect to a single
rigid simple object in the heart (see Section~\ref{sec:tilting} for more details).
The relationship of simple tilting gives the set of
t-structures/hearts in $\D$ the structure of an oriented graph,
which we call the exchange graph $\EG(\D)$.
The operation of repeated tilting with respect to the same
simple (up to shift) gives the graph a sort of linear structure,
which will play an important role in this paper.

Typically, the set of all t-structures is unmanageably large and so one can restrict
attention to those hearts which are `finite', that is,
generated by finitely many simple objects.
This is a full subgraph of $\EG(\D)$, but it is typically not known to be connected
and so we further restrict attention to the principal component $\EGp(\D)$,
containing those hearts which are `reachable' from a canonical finite heart.
This exists for our examples of interest,
namely $\EGp(\D(Q))$ and $\EGp(\D(\qq{N}))$,
which we abbreviate as $\EGp(Q)$ and $\EGp(\qq{N})$.
We denote their canonical hearts by $\nzero$ and $\zero$.

We start by showing that $\EGp(Q)$ has a particularly uniform structure.
More precisely, we prove (Theorem~\ref{ppthm:psp}) that
all the simples in any heart in $\EGp(Q)$ are rigid and so can be tilted.
We do this by also showing
that each such heart has a dual set of projectives,
which is a silting set (see Remark~\ref{rem:silting-set})
and which mutates when the heart tilts.
The rigidity then follows by relating this silting set to a cluster tilting set.

Recall that, in (higher) cluster theory,
there also arises an exchange graph $\CEG{N-1}{Q}$
of cluster tilting sets in the cluster category $\C{N-1}{Q}$,
related by mutation.
Using the quotient functor
$\D(Q)\to \C{N-1}{Q}$, it is possible (cf. \cite[Thm.~2.4]{BRT}) to
identify cluster tilting sets with certain silting sets in $\D(Q)$.
Thus, by associating to a heart $\h$ its (silting) set of projectives $\Proj\h$,
we can define a map
\[
 \com\colon \EGp_N(Q,\nzero) \to \CEG{N-1}{Q}
\]
on a subgraph $\EGp_N(Q,\nzero) \subset \EGp(Q)$
\New{consisting of hearts between $\nzero$ and its shift $\nzero[N-2]$
(see Definition~\ref{def:eg} for precise details).}
We can then show that this map
induces a graph isomorphism (Theorem~\ref{thm:comparison})
\begin{equation} \label{eq:EGQisoCEG}
  \cc{\com}\colon\cc{\EGp_N}(Q,\nzero) \cong \CEG{N-1}{Q},
\end{equation}
where the domain is the `cyclic completion' (Definition~\ref{def:convex})
of $\EGp_N(Q,\nzero)$,
defined using the linear structure mentioned earlier.

On the other hand, via Amiot's construction,
we can also associate to every heart $\h\in\EGp(\qq{N})$ the
silting set $\Proj\h$
in $\per(\qq{N})$, whose image under the quotient functor
$\per(\qq{N})\to \C{N-1}{Q}$
is a cluster tilting set.
Thus we obtain a map
\[
\cov\colon \EGp(\qq{N}) \to \CEG{N-1}{Q}
\]
Our first main result (Theorem~\ref{thm:main}) is that the map $\cov$ is
invariant under the action 
of the Seidel-Thomas braid group, generated by the spherical twist functors,
\[
  \Br \subset\Aut\D(\qq{N})
\]
and that this map induces an isomorphism
\begin{equation} \label{eq:EGqqisoCEG}
 \covtil\colon \EGp(\qq{N})/\Br \cong \CEG{N-1}{Q}.
\end{equation}
In the \CY{3} case, the isomorphism \eqref{eq:EGqqisoCEG} is due to
Keller-Nicol\'{a}s in the more general context of quivers with potential
(see \cite[Thm.~5.6]{K6}).

We prove \eqref{eq:EGqqisoCEG} using \eqref{eq:EGQisoCEG},
by first proving (Theorem~\ref{thm:inducing}) that the analogous subgraph
\[
  \EGp_N(\qq{N},\zero) \subset \EGp(\qq{N})
\]
is isomorphic to $\EGp_N(Q,\nzero)$ via a canonical functor
\[
  \hua{I}\colon \D(Q)\to\D(\qq{N})
\]
which is a `Lagrangian immersion' (Definition~\ref{def:L-imm})
in the following sense.
The `tangent algebra', i.e. the derived endomorphism algebra,
$\Hom_Q^\bullet(X,X)$ of any object $X\in\D(Q)$
is identified with a subspace of
$\Hom_{\qq{N}}^\bullet\bigl(\hua{I}(X),\hua{I}(X)\bigr)$
whose quotient is dual to it (up to a shift).
Note that a heart $\h$ in $\EGp(\qq{N})$ is `induced' by
$\nh\in \EGp(Q)$, and we write $\h=\hua{I}_*(\nh)$,
when $\hua{I}$ maps the simples of $\nh$ to the simples of $\h$.
Note also that the isomorphism
\[
  \hua{I}_*\colon \EGp_N(Q,\nzero) \cong \EGp_N(\qq{N},\zero)
\]
preserves the linear structures and so it induces an isomorphism between
their cyclic completions.
Thus, in Theorem~\ref{thm:main}, we actually prove that
$\EGp_N(\qq{N},\zero)$ is a fundamental domain
for the action of $\Br$ on $\EGp(\qq{N})$,
in such a way that
\[
  \cc{\EGp_N}(\qq{N},\zero) \cong \EGp(\qq{N})/\Br
\]
and, in the process, we obtain a commutative diagram of graph isomorphisms
\[
\xymatrix@C=3pc{
    \cc{\EGp_N}(Q, \nzero) \ar[d]^{\cc{\hua{I}_*}}
        \ar[r]^{\cc{\com}}
        & \CEG{N-1}{Q}\\
    \cc{\EGp_N}(\qq{N}, \zero) \ar@{->}[r]
        & \EGp(\qq{N})/\Br \ar@{->}[u]^{\covtil}}
\]
The fact that the $\Br$-translates of $\EGp_N(\qq{N},\zero)$ cover
$\EGp(\qq{N})$ means (Corollary~\ref{cor:all-induced}) that
every heart in $\EGp(\qq{N})$ is induced by some Lagrangian
immersion $\imm=\varphi\circ \hua{I}\colon \D(Q)\to\D(\qq{N})$,
for some $\varphi\in\Br$.

For our second main result (Theorem~\ref{thm:quiver}),
we exploit this circle of identifications to interpret
the coloured quiver of Buan-Thomas \cite{BT},
which is associated to any cluster tilting set $\CTS=\cov(\h)$
in $\CEG{N-1}{Q}$,
in terms of the Ext quiver of any heart $\h$ in
the corresponding $\Br$-orbit in $\EGp(\qq{N})$,
or equivalently in the fundamental domain $\EGp_N(\qq{N},\zero)$.

Note that the Ext quiver $\Qext{\h}$ of a finite heart $\h$
is the (positively) graded quiver
whose vertices correspond to the simples $\{S_i\}$ of $\h$ and
whose graded edges $i\to j$ correspond to a basis of
$\Hom^\bullet(S_i, S_j)$.

The Ext quivers of a heart $\nh$ in $\EGp_N(Q, \nzero)$
and the corresponding induced heart $\h=\hua{I}_*(\nh)$ in $\EGp_N(\qq{N},\zero)$
may be easily related by the fact that $\hua{I}$
is a Lagrangian immersion.
More precisely we see (Proposition~\ref{pp:mono2}) that
\[
 \Qext{\h}=\double{\Qext{\nh}}{N}
\]
where the \CY{N} double $\double{\hua{Q}}{N}$
is the quiver obtained from $\hua{Q}$
by adding an arrow $j\to i$ of degree $N-k$ for each arrow $i\to j$ of degree $k$
and adding a loop of degree $N$ at each vertex.

Suppose that $\CTS=\cov(\h)=\com(\nh)$ is the corresponding
cluster tilting set in $\C{N-1}{Q}$. Then we define a
graded quiver $\Qaug{\CTS}$, which is a minor modification of
Buan-Thomas' coloured quiver, in that its arrows are graded $1,\ldots,N-1$
rather than $0,\ldots,N-2$ and it contains an extra loop (graded $N$)
at each vertex.
This `augmented' graded quiver $\Qaug{\CTS}$ has the expected symmetry
of the Ext quiver of a heart in a \CY{N} category,
and our key step (Theorem~\ref{thm:mono1}) is to prove that
\[
  \Qaug{\com(\nh)} = \double{\Qext{\nh}}{N},
\]
and so deduce that $\Qext{\h}=\Qaug{\cov(\h)}$, providing the promised interpretation
of the coloured quiver.

The \CY{3} case
\New{is much more studied (as surveyed in \cite{BY}) and}
is in many ways more uniform.
Indeed, we go on to show (Theorem~\ref{thm:egx}) that $\EGp_3(\qq{3},\h)$
is a fundamental domain for the $\Br$-action,
for \emph{any} heart $\h\in\EGp(\qq{3})$.
Since the oriented exchange graph $\CEG{2}{Q}$ is obtained
from the original unoriented cluster exchange graph $\CEGun{Q}$
by replacing each edge by a two-cycle, we may therefore consider that
$\EGp_3(\qq{3},\h)$ is an oriented version of $\CEGun{Q}$
and thus $\EGp(\qq{3})$ is covered by many such oriented versions.

To illustrate several of the main ideas, we conclude the paper
by explicitly describing the quotient of the
exchange graph $\EGp(\Gamma_ N Q)$ by the shift functor,
for a quiver $Q$ of type $A_2$,
and show how it is a rough combinatorial dual to the Farey graph.
This relationship has been made more geometric, in the \CY{3} case,
by Sutherland \cite{S1},
who shows that the hyperbolic disc, in which the Farey graph lives,
is naturally (the $\CC$-quotient of) the space of Bridgeland stability conditions
for $\D(\Gamma_3 Q)$.

This paper is part of the second author's PhD thesis \cite{Qy},
which also provides several other applications of exchange graphs,
such as to spaces of stability conditions and to quantum dilogarithm identities
(cf. \cite{Q2}).

\subsection*{Acknowledgements}
The second author's Ph.D studies were supervised by the first author and supported
by a U.K. Overseas Research Studentship.
The second author would also like to thank Bernhard Keller for
an invitation to Paris 7 in March 2011 and
for sharing his insight and expertise on cluster theory,
which helped to substantially improve an early version of this work.
The visit to Paris was supported by the U.K.-France network RepNet.

\section{Preliminaries}

For simplicity, let $\k$ be a fixed algebraically-closed field.
Let $Q$ be an acyclic quiver,
that is, a directed graph without oriented cycles.
The path algebra $\k Q$ is then finite dimensional.
We denote by $\mod\k Q$ the category of finite dimensional $\k Q$-modules and
let $\hua{D}(Q)=\hua{D}^b(\mod\k Q)$ be its bounded derived category,
which is a triangulated category.
Note that $\mod\k Q$ is hereditary, i.e. $\Ext^2(M,N)=0$ for all modules $M,N$,
and hence \cite{H}
\begin{equation} \label{eq:Happ}
  \Ind\D(Q)=\bigcup_{m\in\ZZ}\Ind(\mod\k Q)[m],
\end{equation}
where $\Ind\hua{C}$ denotes a complete set of indecomposables in an
additive category $\hua{C}$,
\New{that is, one indecomposable object from each isomorphism class}.
In addition, $\hua{D}(Q)$ has Auslander-Reiten (or Serre) duality,
i.e. a functor $\tau\colon\hua{D}(Q)\to\hua{D}(Q)$ with a natural duality
\begin{equation} \label{eq:AR}
  \Ext^1(X,Y)\cong \Hom(Y,\tau X)^*.
\end{equation}
for all objects $X,Y$ in $\hua{D}(Q)$.
Note: in any triangulated category $\D$,
we will write either $\Hom^k(X,Y)$ or $\Ext^k(X,Y)$ for $\Hom_{\D}(X,Y[k])$.

Recall (e.g. \cite[\S3]{B1}) that a \emph{t-structure}
on a triangulated category $\hua{D}$ is
a full subcategory $\hua{P} \subset \hua{D}$
with $\hua{P}[1] \subset \hua{P}$ and such that,
for every object $E\in\hua{D}$, there is a
\New{(necessarily unique)}
triangle $F \to E \to G\to F[1]$ in $\hua{D}$
with $F\in\hua{P}$ and $G\in\hua{P}^{\perp}$, where
\[
  \hua{P}^{\perp}=\{ G\in\hua{D}: \Hom_{\hua{D}}(F,G)=0,
  \forall F\in\hua{P}  \}.
\]
It follows immediately that we also have $\hua{P}^{\perp}[-1]\subset \hua{P}^{\perp}$ and
\[
  \hua{P} = \New{\leftperp{(\hua{P}^\perp)}} = \{ F\in\hua{D}: \Hom_{\hua{D}}(F,G)=0,
  \forall G\in\hua{P}^\perp  \}
\]
\New{and thus the $t$-structure is also determined by $\hua{P}^\perp$.}
Any t-structure is closed under sums and summands
and hence it is determined by the indecomposables in it.

A t-structure $\hua{P}$ is \emph{bounded} if
\[
  \hua{D}= \displaystyle\bigcup_{i,j \in \ZZ} \hua{P}^\perp[i] \cap \hua{P}[j],
\]
or equivalently if, for every object $M$,
the shifts $M[k]$ are in $\hua{P}$ for $k\gg0$ and in $\hua{P}^{\perp}$ for $k\ll0$.
The \emph{heart} of a t-structure $\hua{P}$ is the full subcategory
\[
  \h=  \hua{P}^\perp[1]\cap\hua{P}
\]
and any bounded t-structure is determined by its heart.
More precisely, any bounded t-structure $\hua{P}$
with heart $\h$ determines, for each $M$ in $\hua{D}$,
a canonical filtration (\cite[Lem.~3.2]{B1})
\begin{equation} \label{eq:canonfilt}
\xymatrix@C=0,5pc{
  0=M_0 \ar[rr] && M_1 \ar[dl] \ar[rr] &&  \cdots\ar[rr] && M_{m-1}
        \ar[rr] && M_m=M \ar[dl] \\
  & H_1[k_1] \ar@{-->}[ul]  && && && H_m[k_m] \ar@{-->}[ul]
  }
\end{equation}
where $H_i \in \h$ and $k_1 > \ldots > k_m$ are integers.
Using this filtration,
one can define the $k$-th homology of $M$, with respect to $\h$,
to be
\begin{equation}\label{eq:homology}
 \Ho{k}(M)=
 \begin{cases}
   H_i & \text{if $k=k_i$} \\
   0 & \text{otherwise.}
 \end{cases}
\end{equation}
Then $\hua{P}$ consists of those objects
with no (nonzero) negative homology,
$\hua{P}^\perp$ those with only negative homology
and $\h$ those with homology only in degree 0.

In this paper, we only consider bounded t-structures and their hearts,
and use the phrase `a triangulated category $\hua{D}$ with heart $\h$'
to mean that $\h$ is the heart of a bounded t-structure on $\hua{D}$.
Furthermore, all categories will be implicitly assumed to be $\k$-linear.

\begin{definition}\label{def:proj}
Let $\h$ be a heart in a triangulated category $\D$,
with corresponding t-structure $\hua{P}$.
We say that an object $P\in\D$ is a \emph{projective of} $\h$
if
\[
\text{for all $M\in\h$ and all $k\neq0$,}\;
  \Hom^k(P,M)=0;
\]
or equivalently if $P\in\hua{P}$ and $\Hom^1(P,L)=0$, for all $L\in\hua{P}$.

We denote by $\Proj\h$ a complete set of indecomposable projectives of $\h$.
\end{definition}

Note that a projective \emph{of} $\h$ is not necessary \emph{in} $\h$.
When $\hua{D}=\D(Q)$, we see more explicitly,
using Auslander-Reiten duality \eqref{eq:AR},
that $P\in\D(Q)$ is a projective of a heart $\h$ if and only if
\begin{equation}\label{eq:defproj}
    P\in\hua{P}\cap\tau^{-1}\hua{P}^\perp.
\end{equation}

\begin{proposition}\label{pp:proj}
Let $\h$ be a heart in a triangulated category $\D$
and $P$ a projective of $\h$.
Then, for any $k\in\ZZ$,
\begin{equation}\label{eq:projhom}
    \Hom^{-k}(P, M)=\Hom(P,\Ho{k}(M))
\end{equation}
where $\Ho{\bullet}$ is homology with respect to $\h$, as in \eqref{eq:homology}.

Hence, in particular, $\Proj\h$ is a partial silting set, that is,
\begin{equation}\label{eq:partial-silting}
 \text{for all $P_i,P_j\in\Proj\h$ and all $k>0$,}\quad
 \Hom^k(P_i,P_j)=0.
\end{equation}

Suppose further that $\Proj\h$ `spans' $\h$,
in the sense that, for any $M\in \h$,
\[
 \text{if $\Hom(P, M)=0$ for all $P\in \Proj\h$, then $M=0$.}
\]
Then $\Proj\h$ determines $\h$, by
\[
   M\in\h \;\iff\;
   \text{for all $P\in \Proj\h$ and all $k\neq0$,}\quad
   \Hom^k(P,M)=0.
\]
\end{proposition}

\begin{proof}
Suppose $M$ has a filtration as in \eqref{eq:canonfilt},
so that $M\in \hua{P}[k_m]\cap \hua{P}^\perp[k_1+1]$.
Then, by the definition of projective, we have
\[
   \Hom^{\geq1}(P, M)=0=\Hom^{\leq0}(P, L)
\]
for any $M\in\hua{P}$ and $L\in\hua{P}^\perp$.
Thus $\Hom^{-k}(P, M)=0$, for $k>k_1$ and $k<k_m$,
and $\Ho{k}(M)=0$ for the same range of $k$.
Now, applying $\Hom(P, -)$ to the triangle
\[
    M'[-1] \to \Ho{k_1}(M)[k_1] \to M \to M',
\]
gives $\Hom^{-k_1}(P, M)=\Hom(P,\Ho{k_1}(M))$,
because $M'\in\hua{P}^\perp[k_1]$.
But also
\[
  \Hom^{-k}(P, M)=\Hom^{-k}(P, M'),\;\forall k<k_1,
\]
because $\Ho{k_1}(M)\in\hua{P}^\perp[1]$.
Thus \eqref{eq:projhom} follows by induction.

Then \eqref{eq:partial-silting} is immediate,
because $P_j\in\hua{P}$, so $\Ho{k}(P_j)=0$, for all $k<0$.
The last part is also straightforward,
because, when $\Proj\h$ spans, the RHS implies that
$\Ho{k}(M)=0$, for all $k\neq0$,
which is equivalent to $M\in\h$.
\end{proof}

\begin{remark}\label{rem:silting-set}
In keeping with more recent usage (e.g. \cite[Sec.~2.1]{BRT}),
we use the term ``partial silting set'' for any set satisfying
\eqref{eq:partial-silting} and reserve ``silting set'' for a set which
is also maximal with respect to this property.
This is different from the original usage in \cite{KV},
where a `silting set' is simply required to satisfy \eqref{eq:partial-silting}.
\end{remark}

\New{We easily see that, if $\h_1$ and $\h_2$ are hearts with
$\h_1\subset\h_2$, then $\h_1=\h_2$.
However, there is an alternative, more interesting}
partial order on hearts given by inclusion of their corresponding t-structures.
More precisely, for two hearts $\h_1$ and $\h_2$ in $\hua{D}$,
with t-structures $\hua{P}_1$ and $\hua{P}_2$,
we say
\begin{equation}\label{def:ineq}
  \h_1 \leq \h_2
\end{equation}
if and only if $\hua{P}_2\subset\hua{P}_1$ ,
or equivalently $\h_2\subset \hua{P}_1$,
or equivalently $\hua{P}^\perp_1\subset\hua{P}^\perp_2$,
or equivalently $\h_1\subset \hua{P}^\perp_2[1]$.

A useful elementary observation is the following.
\begin{lemma}\label{lem:3H}
Given hearts $\h_1 \leq \h_2 \leq \h_3$,
any object $T$ in $\h_1$ and $\h_3$ is also in $\h_2$.
\end{lemma}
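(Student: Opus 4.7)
The statement is essentially a direct unwinding of the definitions, so my plan is simply to chase through the equivalent reformulations of $\h_1 \leq \h_2 \leq \h_3$ given in \eqref{def:ineq}.

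Write $\hua{P}_1,\hua{P}_2,\hua{P}_3$ for the bounded t-structures corresponding to $\h_1,\h_2,\h_3$. From the hypothesis and the equivalences recorded just before the lemma, I get the two containments
\[
  \hua{P}_3 \subset \hua{P}_2 \subset \hua{P}_1
  \quad\text{and hence}\quad
  \hua{P}_1^{\perp}[1] \subset \hua{P}_2^{\perp}[1] \subset \hua{P}_3^{\perp}[1].
\]
Now recall that the heart is $\h_i = \hua{P}_i \cap \hua{P}_i^{\perp}[1]$.

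The plan is then to split the requirement $T \in \h_2$ into its two defining conditions and verify each separately. For the first, $T \in \h_3$ gives $T \in \hua{P}_3 \subset \hua{P}_2$. For the second, $T \in \h_1$ gives $T \in \hua{P}_1^{\perp}[1] \subset \hua{P}_2^{\perp}[1]$. Combining, $T \in \hua{P}_2 \cap \hua{P}_2^{\perp}[1] = \h_2$.

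There is no real obstacle here; the only thing to be careful about is to use the correct direction of the inequality (the ordering on hearts is reverse inclusion of the $\hua{P}_i$ but covariant on the $\hua{P}_i^{\perp}[1]$), so that the two conditions defining $\h_2$ come from the two different endpoints $\h_3$ and $\h_1$ of the chain rather than both from the same side.
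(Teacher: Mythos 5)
Your proof is correct and is essentially identical to the paper's: both extract $T\in\hua{P}_3\subset\hua{P}_2$ from $T\in\h_3$ and $T\in\hua{P}_1^\perp[1]\subset\hua{P}_2^\perp[1]$ from $T\in\h_1$, and conclude $T\in\hua{P}_2\cap\hua{P}_2^\perp[1]=\h_2$. Nothing further is needed.
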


\begin{proof}
By assumption $T\in\hua{P}_3\subset\hua{P}_2$
and $T\in \hua{P}^\perp_1[1]\subset\hua{P}^\perp_2[1]$.
\end{proof}

\begin{remark}\label{rem:stdt}
Note that the heart $\h$ of a t-structure on $\hua{D}$
is always an abelian category,
but $\hua{D}$ is not necessarily equivalent to the derived category of $\h$.
On the other hand, any abelian category $\hua{C}$ is the heart
of a canonical t-structure on $\hua{D}(\hua{C})$.
Indeed, any object in $\hua{D}(\hua{C})$ may be considered as a complex in $\hua{C}$
and its ordinary homology objects are the factors
of the filtration \eqref{eq:canonfilt} associated to this canonical t-structure.
Moreover, in such cases the projectives of $\hua{C}$ coincide with the normal definition.
For instance, $\D(Q)$ has a canonical heart $\mod \k Q$,
which we will write as $\nzero$ from now on.
\end{remark}

\section{Tilting Theory} \label{sec:tilting}

A \New{parallel} notion to a t-structure on a triangulated category
is a torsion pair in an abelian category.
Tilting with respect to a torsion pair in the heart of a t-structure
provides a way to pass between different t-structures.

\begin{definition}
A \emph{torsion pair} in an abelian category $\hua{C}$ is an \New{ordered} pair of
full subcategories $\torpr{\hua{F}}{\hua{T}}$, 
such that $\Hom_{\hua{C}}(\hua{T},\hua{F})=0$ and, for 
every $E \in \hua{C}$, there is a short exact sequence
$0 \to E^{\hua{T}} \to E \to E^{\hua{F}} \to 0$,
for some $E^{\hua{T}} \in \hua{T}$ and $E^{\hua{F}} \in \hua{F}$.
\end{definition}

\New{It follows immediately that the short exact sequence is unique
and that, for any torsion pair $\torpr{\hua{F}}{\hua{T}}$,
we have $\hua{F}=\hua{T}^\perp$ and $\hua{T}=\leftperp{\hua{F}}$,
that is, either part of the pair determines the other.}

\begin{proposition} [Happel, Reiten, Smal\o \cite{HRS}] \label{pp:HRS}
Let $\h$ be a heart in a triangulated category $\hua{D}$.
Suppose further that $\torpr{\hua{F}}{\hua{T}}$ is a torsion pair in $\h$.
Then the full subcategory
\[
    \h^\sharp
    =\{ E \in \hua{D}:\Ho1(E) \in \hua{F}, \Ho0(E) \in \hua{T}
        \mbox{ and } \Ho{i}(E)=0 \mbox{ otherwise} \}
\]
is also a heart in $\hua{D}$, as is
\[
    \h^\flat
    =\{ E \in \hua{D}:\Ho{0}(E) \in \hua{F}, \Ho{-1}(E) \in \hua{T}
        \mbox{ and } \Ho{i}(E)=0 \mbox{ otherwise} \},
\]
where $\Ho{\bullet}$ is homology with respect to $\h$, as in \eqref{eq:homology}.
\end{proposition}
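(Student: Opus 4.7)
My plan is to construct the t-structures underlying $\h^\sharp$ and $\h^\flat$ explicitly. I first observe that $\h^\flat = \h^\sharp[-1]$: applying the shift rule $\Ho{k}(E[1]) = \Ho{k-1}(E)$ to the defining conditions converts the requirements `$\Ho{0}(E) \in \hua{F}$ and $\Ho{-1}(E) \in \hua{T}$' into `$\Ho{1}(E[1]) \in \hua{F}$ and $\Ho{0}(E[1]) \in \hua{T}$', so that $\h^\flat$ will be the heart of the shifted t-structure once $\h^\sharp$ is. Hence it suffices to handle $\h^\sharp$.

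For $\h^\sharp$, I propose the candidate t-structure
\[
\hua{P}^\sharp = \{E \in \hua{D} : \Ho{k}(E)=0 \ \text{for all} \ k<0, \ \text{and} \ \Ho{0}(E) \in \hua{T}\},
\]
and expect its right orthogonal to be
\[
(\hua{P}^\sharp)^\perp = \{E \in \hua{D} : \Ho{k}(E)=0 \ \text{for all} \ k>0, \ \text{and} \ \Ho{0}(E) \in \hua{F}\}.
\]
Closure under shift, $\hua{P}^\sharp[1] \subset \hua{P}^\sharp$, is immediate from the shift rule. For the orthogonality $\Hom(\hua{P}^\sharp, (\hua{P}^\sharp)^\perp) = 0$, I will use the canonical filtration \eqref{eq:canonfilt} of $F \in \hua{P}^\sharp$ and $G \in (\hua{P}^\sharp)^\perp$ by shifted objects of $\h$ to reduce to the vanishing of $\Hom(H[k], H'[k'])$ for filtration pieces $H, H' \in \h$ with $k \geq 0$ (and $H \in \hua{T}$ when $k = 0$) and $k' \leq 0$ (and $H' \in \hua{F}$ when $k' = 0$). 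Such a Hom vanishes either because $k > k'$ forces a negative-degree Ext in $\h$, or because $k = k' = 0$ and $\Hom(\hua{T}, \hua{F}) = 0$ by the torsion-pair hypothesis.

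The main step is the construction of a truncation triangle $F \to E \to G$ for arbitrary $E \in \hua{D}$, with $F \in \hua{P}^\sharp$ and $G \in (\hua{P}^\sharp)^\perp$. I will start from the canonical truncation triangle $A \to E \to B$ for the t-structure $\hua{P}[1]$, so that $A$ has $\h$-homology in degrees $\geq 1$ and $B$ in degrees $\leq 0$, together with the further triangle $H \to B \to C$ extracting the top $\h$-homology $H = \Ho{0}(E)$ (so $C$ has homology only in degrees $\leq -1$). Feeding in the torsion-pair sequence $H^{\hua{T}} \to H \to H^{\hua{F}}$ and applying the octahedral axiom to the composition $H^{\hua{T}} \to H \to B$, I obtain triangles $H^{\hua{T}} \to B \to G$ and $H^{\hua{F}} \to G \to C$; the long exact homology sequence forces $G \in (\hua{P}^\sharp)^\perp$. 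A second octahedron, now applied to the composition $E \to B \to G$, produces the desired triangle $F \to E \to G$ together with an auxiliary triangle $A \to F \to H^{\hua{T}}$, from which a homology calculation yields $F \in \hua{P}^\sharp$. Intersecting the descriptions of $\hua{P}^\sharp$ and $(\hua{P}^\sharp)^\perp[1]$ then recovers $\h^\sharp$ exactly as stated, and boundedness is inherited from $\hua{P}$. The main obstacle will be the octahedral bookkeeping: although each step is formal, one must carefully chase several triangles through the long exact homology sequence to confirm that the cones land in the prescribed torsion classes; once this is in place, the remaining checks are routine.
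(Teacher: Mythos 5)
Your proposal is correct, but note that the paper offers no proof of this proposition to compare against: it is quoted as a known result of Happel--Reiten--Smal\o\ \cite{HRS}. What you outline is essentially the standard proof of the HRS tilting theorem: take the tilted aisle $\hua{P}^\sharp=\{E\in\hua{P}:\Ho{0}(E)\in\hua{T}\}$, check shift-closure and orthogonality by d\'evissage along the canonical filtration \eqref{eq:canonfilt} (negative-degree Homs between objects of $\h$ vanish, and the only remaining case $k=k'=0$ is exactly $\Hom(\hua{T},\hua{F})=0$), and construct the truncation triangle by two octahedra threaded through the $\hua{P}[1]$-truncation, the degree-zero homology, and the torsion-pair sequence; your homology bookkeeping giving $G$ with $\Ho{\leq 0}$ only and $\Ho{0}(G)\cong H^{\hua{F}}$, and $F$ with $\Ho{\geq 0}$ only and $\Ho{0}(F)\cong H^{\hua{T}}$, is right. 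Two points deserve an extra line in a full write-up: the paper's definition of a t-structure asks for a \emph{unique} triangle, which follows from the orthogonality you establish by the usual argument; and to conclude that the right orthogonal of $\hua{P}^\sharp$ \emph{equals} your candidate class (rather than merely contains it), apply your truncation triangle to an arbitrary $G\in(\hua{P}^\sharp)^\perp$ and use that the candidate class is closed under direct summands (torsion and torsion-free classes are). With these remarks, the identification of the heart as $\h^\sharp$, the reduction of $\h^\flat$ via $\h^\flat=\h^\sharp[-1]$, and boundedness all go through as you say.
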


We call $\h^\sharp$ the \emph{forward tilt} of $\h$
with respect to $\torpr{\hua{F}}{\hua{T}}$
and $\h^\flat$ the \emph{backward tilt}. 
Note that $\h^\flat=\h^\sharp[-1]$.
Furthermore, $\h^\sharp$ has a torsion pair $\torpr{\hua{T}}{\hua{F}[1]}$
with respect to which the forward and backward tilts are
$\bigl(\h^\sharp\bigr)^\sharp=\h[1]$ and $\bigl(\h^\sharp\bigr)^\flat=\h$.
Similarly with respect to the torsion pair $\torpr{\hua{T}[-1]}{\hua{F}}$ in $\h^\flat$,
we have $\bigl(\h^\flat\bigr)^\sharp=\h$, $\bigl(\h^\flat\bigr)^\flat=\h[-1]$.

\begin{remark}\label{rem:mult-free}
It is immediate from Proposition~\ref{pp:HRS} that the forward tilt $\h^\sharp$
or the backward tilt $\h^\flat$, together with $\h$, determines
the corresponding torsion pair $\torpr{\hua{F}}{\hua{T}}$ by
\begin{equation*}
\hua{F}=\h^\flat\cap \h = \h^\sharp[-1]\cap\h,
\qquad
\hua{T}=\h^\sharp\cap\h = \h^\flat[1] \cap \h.
\end{equation*}
\end{remark}

\begin{proposition}\label{pp:filtration}
Let M be an indecomposable in $\hua{D}$ with
canonical filtration with respect to a heart $\h$,
as in \eqref{eq:canonfilt}.
Given a torsion pair $\torpr{\hua{F}}{\hua{T}}$ in $\h$,
the short exact sequences
\[
  0 \to H_i^{\hua{T}} \to H_i \to H_i^{\hua{F}} \to 0,
\]
can be used to refine the canonical filtration of $M$
to a finer one with factors
\begin{equation}\label{eq:filt 1}
\left(H_1^{\hua{T}}[k_1],H_1^{\hua{F}}[k_1],\,\ldots\,,
    H_m^{\hua{T}}[k_m],H_m^{\hua{F}}[k_m]\right),
\end{equation}
Furthermore, if we take the canonical filtration of $M$
with respect to the heart $\h^\sharp$ and refine it
using the torsion pair $\torpr{\hua{T}}{\hua{F}[1]}$,
then we obtain essentially the same filtration
\begin{equation}\label{eq:filt 2}
\Big(H_1^{\hua{T}}[k_1],\tilde{H}_1^{\hua{F}}[k_1-1],\,\ldots\,,
    H_m^{\hua{T}}[k_m],\tilde{H}_m^{\hua{F}}[k_m-1]\Big),
\end{equation}
where $\tilde{H}_i=H_i[1]$.
\end{proposition}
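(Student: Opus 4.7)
The plan is to construct the refined filtration \eqref{eq:filt 1} by successive application of the octahedral axiom to the canonical $\h$-filtration, and then to recognise it as the canonical $\h^\sharp$-filtration refined by the torsion pair $\<\hua{T},\hua{F}[1]\>$.

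First, for each $i$ the short exact sequence $0\to H_i^{\hua{T}}\to H_i\to H_i^{\hua{F}}\to 0$ in $\h$ shifts to a triangle $H_i^{\hua{T}}[k_i]\to H_i[k_i]\to H_i^{\hua{F}}[k_i]$ in $\hua{D}$. Combined with the filtration triangle $M_{i-1}\to M_i\to H_i[k_i]$ via the octahedral axiom, this produces an intermediate $N_i$ fitting in triangles $M_{i-1}\to N_i\to H_i^{\hua{T}}[k_i]$ and $N_i\to M_i\to H_i^{\hua{F}}[k_i]$. Splicing these replacements into the canonical $\h$-filtration for every $i$ yields a filtration of $M$ with the factors of \eqref{eq:filt 1}.

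Next, to identify \eqref{eq:filt 2} with \eqref{eq:filt 1}, I would reinterpret each factor from the $\h^\sharp$-viewpoint: $H_i^{\hua{T}}[k_i]$ is an $\h^\sharp$-object at shift $k_i$ (since $\hua{T}\subset\h^\sharp$), while $H_i^{\hua{F}}[k_i]=\tilde H_i^{\hua{F}}[k_i-1]$ with $\tilde H_i^{\hua{F}}:=H_i^{\hua{F}}[1]\in\hua{F}[1]\subset\h^\sharp$ is an $\h^\sharp$-object at shift $k_i-1$. Thus \eqref{eq:filt 1} is a filtration by $\h^\sharp$-objects at the weakly decreasing shifts $k_1,k_1-1,\ldots,k_m,k_m-1$. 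Grouping consecutive factors of equal $\h^\sharp$-shift gives a filtration with strictly decreasing shifts and factors in $\h^\sharp$; by uniqueness of the canonical filtration this must be the canonical $\h^\sharp$-filtration of $M$. Refining each grouped factor back through the torsion pair $\<\hua{T},\hua{F}[1]\>$ in $\h^\sharp$ then returns \eqref{eq:filt 2}, which is just \eqref{eq:filt 1} after the relabelling $H_i^{\hua{F}}[k_i]=\tilde H_i^{\hua{F}}[k_i-1]$.

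The delicate point is the regrouping step when $k_{i+1}=k_i-1$, where $\tilde H_i^{\hua{F}}[k_i-1]$ and $H_{i+1}^{\hua{T}}[k_i-1]$ share an $\h^\sharp$-shift and merge into a single $\h^\sharp$-factor $E$ fitting in $0\to\tilde H_i^{\hua{F}}\to E\to H_{i+1}^{\hua{T}}\to 0$. One needs to check that this is precisely the torsion sequence for $\<\hua{T},\hua{F}[1]\>$, in which $\hua{F}[1]$ is the torsion subobject and $\hua{T}$ the torsion-free quotient (as follows from $\Hom(\hua{F}[1],\hua{T})=0$), so that the ungrouping step reproduces the two factors in the order of \eqref{eq:filt 2}. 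With this in hand, uniqueness of the canonical $\h^\sharp$-filtration closes the argument without further computation.
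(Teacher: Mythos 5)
Your construction of \eqref{eq:filt 1} is exactly the paper's argument: the paper's entire proof is the one-line remark that both filtrations follow by repeated use of the Octahedral Axiom, and your octahedron interpolating $N_i$ between $M_{i-1}$ and $M_i$ is precisely that repeated use. Where you differ is in the second half: rather than also producing \eqref{eq:filt 2} directly by octahedra, you obtain it from \eqref{eq:filt 1} by reading the factors as $\h^\sharp$-objects at shifts $k_i$ and $k_i-1$, grouping the (at most two) consecutive factors of equal $\h^\sharp$-shift, and invoking uniqueness of the canonical filtration to identify the grouped filtration with the canonical $\h^\sharp$-filtration; ungrouping via the torsion pair $\<\hua{T},\hua{F}[1]\>$ then returns \eqref{eq:filt 2}. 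This buys something the paper leaves implicit: it actually proves that the two refinements are ``essentially the same'' filtration, not merely that each exists. Your treatment of the delicate case $k_{i+1}=k_i-1$ is also sound: a short exact sequence in $\h^\sharp$ with sub in the torsion class $\hua{F}[1]$ and quotient in the torsion-free class $\hua{T}$ is automatically the torsion decomposition, so the merged factor ungroups in the order stated in \eqref{eq:filt 2}. The only caveat worth flagging is cosmetic: some $H_i^{\hua{T}}$ or $H_i^{\hua{F}}$ may vanish, so the canonical $\h^\sharp$-filtration omits those factors, which is exactly the ``essentially'' in the statement.
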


\begin{proof}
The existence of the filtrations \eqref{eq:filt 1} and \eqref{eq:filt 2}
follows by repeated use of the Octahedral Axiom.
\end{proof}

We observe how tilting relates to the partial ordering of hearts defined in \eqref{def:ineq}.

\begin{lemma}\label{lem:tiltorder}
Let $\h$ be a heart in $\hua{D}(Q)$.
Then $\h<\h[m]$ for $m>0$.
For any forward tilt $\h^\sharp$
and backward tilt $\h^\flat$,
we have $\h[-1] \leq \h^\flat \leq \h \leq \h^\sharp \leq \h[1]$.
\end{lemma}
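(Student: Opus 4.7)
The plan is to unpack the partial order via its equivalent form ``$\h_1 \le \h_2$ if and only if $\h_2 \subset \hua{P}_1$'' (with $\hua{P}_i$ the aisle of $\h_i$), verify each inequality directly, and use $\h^\flat = \h^\sharp[-1]$ (with respect to the same torsion pair) to reduce the $\h^\flat$ statements to the $\h^\sharp$ ones by shifting by $-1$. Thus it suffices to establish the strict inequality $\h < \h[m]$ for $m>0$ together with $\h \le \h^\sharp \le \h[1]$.

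For $\h \le \h[m]$, iterating $\hua{P}[1] \subset \hua{P}$ immediately yields $\hua{P}[m] \subset \hua{P}$. The strictness $\h \ne \h[m]$ is the most delicate point, and here I would invoke boundedness: if $\hua{P} = \hua{P}[m]$, then $\hua{P} = \hua{P}[jm]$ for every $j \in \ZZ$, so for a nonzero $M$ with $M[k_0] \in \hua{P}$ the whole progression $\{M[k_0-jm]\}_j$ lies in $\hua{P}$. For $j \gg 0$ this entry is also in $\hua{P}^\perp$ by boundedness; but any $X \in \hua{P} \cap \hua{P}^\perp$ satisfies $\Hom(X,X) = 0$ and hence is zero, forcing $M = 0$, a contradiction.

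For $\h \le \h^\sharp$, I would verify $\h^\sharp \subset \hua{P}$: every $E \in \h^\sharp$ has $\h$-homology concentrated in nonnegative degrees, so the canonical filtration \eqref{eq:canonfilt} exhibits $E$ as an iterated extension of objects of $\h$ and $\h[1]$, both contained in $\hua{P}$, and extension-closure of the aisle then yields $\hua{P}^\sharp \subset \hua{P}$. For $\h^\sharp \le \h[1]$, it suffices to show $\h[1] \subset \hua{P}^\sharp$, because $\hua{P}[1]$ is generated under extensions by the subcategories $\h[k]$ with $k \ge 1$, and $\hua{P}^\sharp$ is closed under both extensions and positive shifts. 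Given $H \in \h$ with torsion sequence $0 \to H^\hua{T} \to H \to H^\hua{F} \to 0$, shifting produces a triangle $H^\hua{T}[1] \to H[1] \to H^\hua{F}[1] \to H^\hua{T}[2]$; since $H^\hua{T}[1] \in \hua{T}[1] \subset \h^\sharp[1] \subset \hua{P}^\sharp$ and $H^\hua{F}[1] \in \h^\sharp \subset \hua{P}^\sharp$, the extension $H[1]$ lies in $\hua{P}^\sharp$ as required. I do not anticipate any serious obstacle beyond the boundedness step used for strictness; the remainder amounts to careful bookkeeping of the definitions.
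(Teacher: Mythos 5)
Your proof is correct, and in spirit it follows the same strategy as the paper: reduce everything to inclusions of aisles and handle the backward tilt by symmetry/shift. The differences are minor but worth noting. For $\h\leq\h^\sharp$ the paper quotes Proposition~\ref{pp:filtration} to get $\hua{P}^\sharp\subset\hua{P}$, while you verify the same inclusion directly from the canonical filtration and extension-closure of the aisle; for $\h^\sharp\leq\h[1]$ the paper invokes the identity $\bigl(\h^\sharp\bigr)^\sharp=\h[1]$ (with respect to the torsion pair $\<\hua{T},\hua{F}[1]\>$) and applies the previous step, whereas you prove $\h[1]\subset\hua{P}^\sharp$ directly from the shifted torsion triangle $H^{\hua{T}}[1]\to H[1]\to H^{\hua{F}}[1]$ --- a self-contained alternative that avoids relying on the double-tilt identity. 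Finally, the paper simply asserts $\hua{P}\supsetneq\hua{P}[1]$ for the strictness $\h<\h[m]$, while your boundedness argument (an object of $\hua{P}\cap\hua{P}^\perp$ is zero, so $\hua{P}=\hua{P}[m]$ would force all objects to vanish) supplies the justification the paper leaves implicit; this is a genuine, if small, improvement in completeness.
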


\begin{proof}
Since $\hua{P}\supsetneq\hua{P}[1]$,
we have $\h<\h[m]$ for $m>0$.
By Proposition~\ref{pp:filtration} we have $\hua{P} \supset \hua{P}^\sharp$,
hence $\h \leq \h^\sharp$.
Noticing that $\left(\h^\sharp\right)^\sharp=\h[1]$ with respect to
the torsion pair $\torpr{\hua{T}}{\hua{F}[1]}$, we have $\h^\sharp \leq \h[1]$.
Similarly, $\h[-1] \leq \h^\flat \leq \h$.
\end{proof}

In fact, the forward tilts $\h^\sharp$ can be characterised as
precisely the hearts between $\h$ and $\h[1]$ (cf. \cite{HRS}).
The backward tilts $\h^\flat$ are similarly those between $\h[-1]$ and $\h$.


Now, recall that an object in an abelian category is \emph{simple}
if it has no proper subobjects, or equivalently
it is not the middle term of any (non-trivial) short exact sequence.
An object $M$ is \emph{rigid} if $\Ext^1(M,M)=0$.

\begin{lemma}\label{lem:simpletilt}
Let $S$ be a rigid simple object in a Hom-finite abelian category $\hua{C}$.
Then $\hua{C}$ admits a torsion pair $\torpr{\hua{F}}{\hua{T}}$
such that $\hua{F}=\<S\>$.
More precisely, for any $M\in\h$, in the corresponding short exact sequence
\begin{equation}\label{eq:g_0}
    0\to M^{\hua{T}}\to M \to M^{\hua{F}} \to 0
\end{equation}
we have
$M^{\hua{F}}=S\otimes\Hom(M,S)^*$.
Similarly, there is also a torsion pair with the torsion part $\hua{T}=\<S\>$,
obtained by setting $M^\hua{T}=S\otimes\Hom(S,M)$.
\end{lemma}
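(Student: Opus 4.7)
The plan is to propose as torsion pair the categories $\hua{F} = \<S\>$ (finite direct sums of $S$) and $\hua{T} = \{M \in \hua{C} : \Hom(M, S) = 0\}$, for which the vanishing $\Hom(\hua{T}, \hua{F}) = 0$ is immediate. The substantive task is to construct, for each $M \in \hua{C}$, a short exact sequence of the required form. The natural candidate is the evaluation morphism $\operatorname{ev}: M \to S \otimes V^*$, where $V = \Hom(M, S)$, which is finite-dimensional by Hom-finiteness; picking a basis $f_1, \ldots, f_n$ of $V$ this is simply $(f_1, \ldots, f_n): M \to S^{\oplus n}$. I will take $M^\hua{F} = \operatorname{Im}(\operatorname{ev})$ and $M^\hua{T} = \ker(\operatorname{ev})$.

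Two steps then remain: (i) that $\operatorname{ev}$ is surjective, and (ii) that $\ker(\operatorname{ev})$ lies in $\hua{T}$. Step (ii) is straightforward once (i) is established: applying $\Hom(-, S)$ to $0 \to \ker(\operatorname{ev}) \to M \to S^{\oplus n} \to 0$, rigidity kills $\Ext^1(S^{\oplus n}, S)$, while the induced map $\operatorname{ev}^*: \Hom(S^{\oplus n}, S) \to \Hom(M, S)$ sends $\pi_i \mapsto f_i$ and is thus an isomorphism by the choice of basis; the long exact sequence then forces $\Hom(\ker(\operatorname{ev}), S) = 0$.

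The main obstacle is surjectivity (i), for which the key lemma is that every subobject $N$ of $S^{\oplus n}$ is isomorphic to $S^{\oplus k}$ for some $k \leq n$. I would prove this by induction on $n$: writing $\pi_1: S^{\oplus n} \to S$ for the first projection, the intersection $N_1 = N \cap \ker(\pi_1)$ sits in $S^{\oplus n-1}$, so $N_1 \cong S^{\oplus k}$ by induction, while $N/N_1$ embeds into $S$ and hence equals $0$ or $S$ by simplicity; the resulting extension $0 \to N_1 \to N \to N/N_1 \to 0$ splits by rigidity, since $\Ext^1(S, S^{\oplus k}) = 0$.

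Granted this structural lemma, set $I = \operatorname{Im}(\operatorname{ev}) \cong S^{\oplus k}$ with $k \leq n$, and factor $\operatorname{ev} = \iota \circ p$ through $I$. The composite $\operatorname{ev}^* = p^* \circ \iota^*$ sends $\pi_i \mapsto f_i$, hence is injective by linear independence of $\{f_i\}$; in particular $\iota^*: \Hom(S^{\oplus n}, S) \to \Hom(I, S)$ is injective, which reads as an injection $\k^n \hookrightarrow \k^k$, giving $n \leq k$ and therefore $k = n$. Finally, the dual half of the statement (with $\hua{T} = \<S\>$ and $M^\hua{T} = S \otimes \Hom(S, M)$) follows by the exactly dual argument, using the coevaluation map $\operatorname{coev}: S \otimes \Hom(S, M) \to M$, the functor $\Hom(S, -)$, and the same rigidity $\Ext^1(S, S) = 0$.
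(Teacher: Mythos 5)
Your proof is correct and follows essentially the same route as the paper's: the same torsion pair $\<\hua{F},\hua{T}\>$ with $\hua{F}=\<S\>$ and $\hua{T}={}^{\perp}S$, the same evaluation sequence $0\to\ker(\operatorname{ev})\to M\to S\otimes\Hom(M,S)^*\to 0$, and the same application of $\Hom(-,S)$ with $\Ext^1(S,S)=0$ to show the kernel lies in $\hua{T}$; the only difference is that the paper asserts the surjectivity of the evaluation map as a ``canonical surjection'', whereas you actually prove it via the lemma that subobjects of $S^{\oplus n}$ are of the form $S^{\oplus k}$. One line should be added at the end of that argument: $k=n$ only gives $\operatorname{Im}(\operatorname{ev})\cong S^{\oplus n}$ abstractly, so conclude either by additivity of length (a subobject of $S^{\oplus n}$ of length $n$ is all of it) or by noting that injectivity of $\operatorname{ev}^*$ gives $\Hom(\operatorname{coker}(\operatorname{ev}),S)=0$, which forces $\operatorname{coker}(\operatorname{ev})=0$ since any nonzero quotient of $S^{\oplus n}$ admits a nonzero map to $S$.
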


\begin{proof}
If we define $M^{\hua{F}}$ as in the lemma, then
there is a canonical surjection $M\to M^{\hua{F}}$,
whose kernel we may define to be $M^{\hua{T}}$,
yielding the short exact sequence \eqref{eq:g_0}.

Applying $\Hom(-,S)$ to \eqref{eq:g_0}, we get
\[
    0 \to \Hom(M^{\hua{F}}, S) \to \Hom(M, S)
    \to \Hom(M^{\hua{T}}, S) \to \Ext^1(M^{\hua{F}}, S) \to\cdots .
\]
But
\begin{gather*}
    \Hom(M^{\hua{F}}, S)=\Hom( S\otimes\Hom(M,S)^*, S) \cong \Hom(M, S),\\
    \Ext^1(M^{\hua{F}}, S)=\Ext^1( S\otimes\Hom(M,S)^*, S)=0,
\end{gather*}
so we have $\Hom(M^{\hua{T}}, S)=0$ and hence
$\Hom(M^{\hua{T}}, M^{\hua{F}})=0$ as required.
The proof of the second statement is similar.
\end{proof}

\begin{definition}\label{def:simpletilt}
A forward tilt of a heart $\h$ is \emph{simple},
if, in the corresponding torsion \New{pair} $\torpr{\hua{F}}{\hua{T}}$, the 
part $\hua{F}$ is generated by a single rigid simple $S$
\New{and thus $\hua{T}=\leftperp{S}$.}
We denote the new heart by $\tilt{\h}{\sharp}{S}$.
Similarly, a backward tilt of $\h$ is simple if 
$\hua{T}$ is generated by a rigid simple $S$
\New{and thus $\hua{F}=S^\perp$.}
The new heart is denoted by $\tilt{\h}{\flat}{S}$.
\end{definition}

For the canonical heart
$\nzero$ in $\D(Q)$,
an APR tilt (\cite[p.~201]{ASS1}),
which reverses all arrows at a sink/source of $Q$,
is an example of a simple (forward/backward) tilt.

We now prove a basic result about simple tilting
for hearts in an arbitrary triangulated category $\D$,
which will play a key role in Section~\ref{sec:cy3}
in the special case of $\D(\qq{3})$.

\begin{lemma} \label{pp:either-or}
Let $\h,\h_0$ be hearts in $\D$ with $\h_0[-1]\leq \h\leq\h_0$
and let $S$ be a rigid simple object in $\h$.
Then
\StartEnum
\item
  $\tilt{\h}{\sharp}{S} \leq \h_0$
  if and only if $S\in \h_0[-1]$ if and only if $S\notin \h_0$,
\item
  $\h_0[-1]\leq\tilt{\h}{\flat}{S}$
  if and only if $S\in \h_0$ if and only if $S\notin \h_0[-1]$.
\StopEnum
In particular, this means that $S$ must be in one of $\h_0$ or $\h_0[-1]$.
\end{lemma}

\begin{proof}
In each case, the second `only if' is immediate and the first `only if' follows from Lemma~\ref{lem:3H},
since, for $1^\circ$, we also have $\h_0\leq \h[1]$ and $S[1]$ is in both $\tilt{\h}{\sharp}{S}$ and $\h[1]$,
while, for $2^\circ$, we also have $\h[-1]\leq\h_0[-1]$ and $S[-1]$ is in both $\h[-1]$ and $\tilt{\h}{\flat}{S}$.
Thus we only need to show, in each case, that the last condition implies the first.

For $1^\circ$,
note that $S\in\hua{P}^\perp[1]\subset\hua{P}_0^\perp$,
so that $S\notin \h_0$ implies $S\notin \hua{P}_0$.
Suppose, for contradiction, that there is an object $M\in\hua{P}_0$,
but with $M \notin \tilt{\hua{P}}{\sharp}{S}$.
Consider the filtrations \eqref{eq:filt 1} and \eqref{eq:filt 2} of $M$,
with respect to $\h$, and the torsion pair corresponding to $\tilt{\h}{\sharp}{S}$.
Since $M\in\hua{P}_0\subset\hua{P}$, we have $k_m\geq 0$.
But $M \notin \tilt{\hua{P}}{\sharp}{S}$ forces $k_m=0$ and $H_m^\hua{F}=S^t\neq 0$.
In this case, there is a triangle
$M' \to M \to S^t \to M'[1]$ with $M'\in\hua{P}$.
Hence we have $M'[1]\in \hua{P}[1] \subset \hua{P}_0$ and, as $M \in \hua{P}_0$,
this implies $S\in\hua{P}_0$,
contradicting the initial observation.
Thus $\hua{P}_0 \subset \tilt{\hua{P}}{\sharp}{S}$,
that is, $\tilt{\h}{\sharp}{S} \leq \h_0$.

Similarly, for $2^\circ$,
we have $S\in\hua{P}\subset \hua{P}_0[-1]$,
so $S\notin \h_0[-1]$ implies $S\notin\hua{P}_0^\perp$.
If there is an object $M\notin\tilt{\hua{P}}{\flat}{S}[1]^\perp$, but with
$M\in\hua{P}_0^\perp \subset \hua{P}^\perp[1]$,
we deduce as before that $k_1=0$ with $H^{\hua{T}}_1=S^t\neq0$ in \eqref{eq:filt 2}.
Hence there is a triangle $M'[-1]\to S^t\to M \to M'$ with
$M'[-1]\in\hua{P}^\perp\subset\hua{P}_0^\perp$,
which implies $S\in\hua{P}_0^\perp$, contradicting the initial observation.
Thus $\hua{P}^\perp_0 \subset \tilt{\hua{P}}{\flat}{S}[1]^\perp$,
that is, $\h_0\leq \tilt{\h}{\flat}{S}[1]$.
\end{proof}

\section{Cluster theory} \label{sec:cluster}

We review some notions from (higher) cluster theory and, in particular,
describe the relationship between hearts and $m$-cluster tilting sets.
Note: here the `classical' case is $m=2$, although it would be $m=1$
for some authors' indexing.

\begin{definition} [cf. \cite{BMRRT, BT, IY, ZZ}] \label{def:cluster}
For any integer $m\geq 2$, the \emph{$m$-cluster shift} is the auto-equivalence of $\D(Q)$ given by
$\shift{m}=\tau^{-1}\circ[m-1]$.
\begin{itemize}
\item
    The \emph{$m$-cluster category} $\C{m}{Q}$ is
    the orbit category $\D(Q)/\shift{m}$ (cf. \cite{K5}), that is,
\[\begin{array}{rll}
        \Ext^k_{\C{m}{Q}}(M,L)
        &=\Hom_{\C{m}{Q}}(M,L[k])\\
        &=\bigoplus_{t\in\ZZ} \Hom_{\D(Q)}(M, \shift{m}^t L[k]).
\end{array}\]
\item
  An \emph{$m$-cluster tilting set} $\{\cts_j\}_{j=1}^n$ in $\C{m}{Q}$ is
  an Ext-configuration, i.e. a maximal collection
  of non-isomorphic indecomposables with
  $\Ext^k_{\C{m}{Q}}(\cts_i, \cts_j)=0$, for all $1\leq k\leq m-1$.
  The sum $\CTS=\bigoplus_{i=1}^n \cts_i$ is then an
  \emph{$m$-cluster tilting object}, and is equivalent information.
  (We will often omit the ``$m$-'' from ``$m$-cluster''
  when it is clear from the context.)
  Note that a maximal set necessarily has $n=\# Q_0$ \New{elements}
 (\cite[Thm.~3.3]{ZZ}).
  An \emph{almost complete cluster tilting set} in $\C{m}{Q}$ is
  a subset of a cluster tilting set with $n-1$ elements.
\item
    The \emph{forward mutation} $\mu_i$ at the $i$-th object
    acts on an $m$-cluster tilting set $\{\cts_j\}_{j=1}^n$,
    by replacing $\cts_i$ by 
    \begin{equation}\label{eq:mutate1}
        \cts_i^\sharp = \Cone(\cts_i \to \bigoplus_{j\neq i} \Irr(\cts_i,\cts_j)^*\otimes \cts_j),
    \end{equation}
    where $\Irr(\cts_i,\cts_j)$ is a space of irreducible maps $\cts_i\to \cts_j$,
    in the additive subcategory $\Add \CTS$ of $\C{m}{Q}$.
    When $Q$ is acyclic, we have $\Irr(\cts_i,\cts_i)=0$,
    that is, the Gabriel quiver of $\End(\CTS)$ has no loops (cf. \cite{BT}).
    Furthermore, the \emph{backward mutation} $\mu_i^{-1}$ replaces $\cts_i$ by
    \begin{equation}\label{eq:mutate2}
      \cts_i^\flat =  \Cone(\bigoplus_{j\neq i} \Irr(\cts_j,\cts_i)\otimes \cts_j \to \cts_i)[-1].
    \end{equation}
\item
    The \emph{exchange graph} $\CEG{m}{Q}$ of $m$-clusters is
    the oriented graph whose vertices are $m$-cluster tilting sets
    and whose edges are the forward mutations.
    Note that $\CEG{m}{Q}$ is connected (\cite[Prop.~7.1]{BRT}).
\end{itemize}
\end{definition}

In the case $m=2$, the exchange graph is usually presented as an unoriented graph
$\CEGun{Q}$, from which $\CEG{2}{Q}$ is obtained by replacing
each unoriented edge by an oriented two-cycle.
For instance, for $Q$ of type $A_3$, $\CEGun{Q}$ is the underlying
unoriented graph of Figure~\ref{fig:1} (cf. \cite[Fig.~4]{BMRRT}).
We will explain why this should be the case in Section~\ref{sec:cy3}.

To relate hearts in $\D(Q)$ and cluster tilting sets in $\C{m}{Q}$,
we consider the restriction of the quotient functor $\cluster_m\colon \D(Q)\to\C{m}{Q}$
to a fundamental domain $\fundom{m}$, defined (as in \cite[Sec.~2.2]{BRT3}, \cite[Prop.~2.2]{Z})
by
\begin{equation}\label{eq:fundom}
 \Ind \fundom{m} = \Proj\nzero [m-1] \cup \bigcup_{j=0}^{m-2} \Ind\nzero[j],
\end{equation}
where we recall that $\nzero=\mod \k Q$ is the canonical heart in $\D(Q)$
and that $\Proj\h$ is a complete set of indecomposable projectives for a heart $\h$,
while $\Ind\hua{C}$ is a complete set of indecomposables in an additive category $\hua{C}$.
Thus we obtain a bijection
\begin{equation}\label{eq:cluster}
   \cluster_m \colon \Ind \fundom{m} \cong \Ind\C{m}{Q}.
\end{equation}

\begin{lemma}\label{lem:inj}
Suppose that $\h$ is a heart in $\D(Q)$ with $\nzero\leq \h\leq \nzero[M]$.

If $m\geq M+1$, then $\Proj\h \subset \fundom{m}$ and
thus $\cluster_m\bigl(\Proj\h\bigr)$ is a partial $m$-cluster tilting set
and $\#\Proj\h\leq\#Q_0$.
If actually $\#\Proj\h=\#Q_0$, then $\cluster_m\bigl(\Proj\h\bigr)$
is an $m$-cluster tilting set and $\Proj\h$ is a silting set.

If $m\geq M+2$, then, for $\mathbf{P}=\bigoplus\Proj\h$, we have
\begin{equation}\label{eq:DC}
    \End_{\D(Q)}(\mathbf{P})
    =\End_{\C{m}{Q}}(\cluster_m(\mathbf{P})).
\end{equation}
\end{lemma}

\begin{proof}
By \eqref{eq:defproj} and the bounds on $\h$, we have
\begin{equation}\label{eq:where-proj}
 \Proj\h \;\subset\; \hua{P}\cap \tau^{-1} \hua{P}^\perp
 \;\subset\;  \hua{P}_Q\cap \tau^{-1} \hua{P}_Q^\perp[M] = \fundom{M+1},
\end{equation}
while $\fundom{M+1} \subset \fundom{m}$, when $m\geq M+1$.
This means in particular that (the images of) the projectives in $\Proj\h$ are distinct,
i.e. non-isomorphic, in $\C{m}{Q}$.

Since $\Proj\h$ is a partial silting set (see \eqref{eq:partial-silting}
in Proposition~\ref{pp:proj}),
we deduce (see \cite[Lem.~1.1]{W}) that
$\Ext_{\C{m}{Q}}^k(P_i,P_j)=0$, for all $1\leq k\leq m$,
and so $\cluster_m\bigl(\Proj\h\bigr)$ is a partial
$m$-cluster tilting set. In particular, $\#\Proj\h \leq\#Q_0$.
The claim in the case of equality follows from
\cite[Sec.~2]{BRT3} or \cite[Sec.~3]{ZZ}.

To prove \eqref{eq:DC}, we must show that,
for all $P_i,P_j\in\Proj\h$ and all $t\neq0$,
we have $\Hom(\shift{m}^t P_i, P_j)=0$.
Suppose first that $t\geq1$. Then \eqref{eq:where-proj} gives
\begin{equation}\label{eq:leminjcalc1}
    \shift{m}^t P_i
    \;\in\; \shift{m}^t \hua{P}_Q
    \;\subset\; \shift{m}^{t-1} \tau^{-1} \hua{P}_Q [m-1]
    \;\subset\; \tau^{-1} \hua{P}_Q [m-1]
\end{equation}
and also $P_j\in \tau^{-1} \hua{P}_Q^\perp[M]$.
But if $m-1\geq M$, then $\Hom(\hua{P}_Q[m-1],\hua{P}_Q^\perp[M])=0$
and so $\Hom(\shift{m}^t P_i, P_j)=0$.
On the other hand, suppose that $t\leq -1$.
Then
\begin{equation}\label{eq:leminjcalc2}
    \shift{m}^{t} P_i
    \;\in\;     \shift{m}^{t}  \tau^{-1}\hua{P}_Q^\perp[M]
    \;\subset\; \shift{m}^{t+1} \hua{P}_Q^\perp[M-m+1]
    \;\subset\; \hua{P}_Q^\perp[M-m+1]
\end{equation}
Thus $\shift{m}^{t} P_i \in \hua{P}_Q^\perp[-1]$,
when $m\geq M+2$, while $P_j\in\hua{P}_Q$.
But $\nzero$ is hereditary, so $\Hom(\hua{P}_Q^\perp[-1],\hua{P}_Q)=0$,
and so $\Hom(\shift{m}^{t} P_i, P_j)=0$
in this case as well.
\end{proof}

\begin{remark}\label{rem:silting}
Any heart $\h$ in $\D(Q)$ can be shifted to lie between
$\nzero$ and $\nzero[M]$ for some sufficiently large $M$ and so
we can deduce, from Lemma~\ref{lem:inj},
that $\#\Proj\h\leq\#Q_0$ and that $\Proj\h$ is a silting set if and only if $\#\Proj\h=\#Q_0$.

In the case that $\Proj\h$ is a silting set, i.e. $\mathbf{P}=\bigoplus\Proj\h$ is a silting object,
we can also then use \eqref{eq:DC} to deduce that the
Gabriel quiver of $\End_{\D(Q)}(\mathbf{P})$ has no loops.
This is because we know, by \cite[Sec.~2]{BT},
that this holds for the Gabriel quiver of $\End_{\C{m}{Q}}(\cluster_m(\mathbf{P}))$,
since $\cluster_m(\mathbf{P})$ is a cluster tilting object.
\end{remark}

\section{Exchange graphs}

The relationship of simple tilting (Definition~\ref{def:simpletilt}) gives a natural
graph structure on the collection of all hearts in a triangulated category.
We use this and the partial order \eqref{def:ineq} to define the main objects of study of the paper.

\begin{definition}\label{def:eg}
The \emph{total exchange graph} $\EG(\D)$ of a triangulated category $\D$
is the oriented graph
whose vertices are all hearts in $\D$
and whose edges correspond to simple forward tiltings between them.
When $\D$ has a canonical heart $\h_{\D}$,
the \emph{principal component} $\EGp(\D)$ is the connected component
of $\EG(\D)$ that contains $\h_{\D}$.
It contains precisely those hearts `reachable' by tilting from $\h_{\D}$.

For any heart $\h_0$ in $\D$ and any $N\geq 3$,
the \emph{interval} of length $N-2$ at $\h_0$
is the full subgraph of $\EG(\D)$ given by
\begin{equation}\label{eq:EGinterval}
 \EG_N(\D, \h_0) =  \bigl\{\h\in\EG(\D) \mid
        \h_0 \leq\h\leq\h_0[N-2] \bigr\},
\end{equation}
and we define the \emph{based exchange graph} $\EGp_N(\D, \h_0)$ with base $\h_0$
to be the principal component of the interval,
that is, the connected component that contains $\h_0$.
\end{definition}

We label each edge in $\EG(\D)$ (and its subgraphs) by the simple object of the tilting,
i.e. the edge from $\h$ to $\tilt{\h}{\sharp}{S}$ is labelled by $S$.
By Lemma~\ref{lem:tiltorder}, we have $\h<\tilt{\h}{\sharp}{S}$ for any simple tilting,
which implies that there are no loops or oriented cycles in the exchange graph.
By Remark~\ref{rem:mult-free}, if two hearts are related by a simple tilt,
then the simple is uniquely determined, so the exchange graph also has no multiple edges.

Note that $\EGp_N(\D, \h_0)$ is the principal component of the interval
and not the interval in the principal component, that is, $\EGp(\D)\cap\EG_N(\D,\h_0)$,
which is not necessary connected.
Indeed, even if $\h_0\in\EGp(\D)$, then we only know \emph{a priori} that
\begin{equation}\label{eq:inclusion}
  \EGp_N(\D, \h_0) \subset \EGp(\D)\cap\EG_N(\D,\h_0).
\end{equation}
In the special case when $\D=\D(Q)$ and $\h_0=\nzero$,
we will find that this is actually an equality: see \eqref{eq:prin}.

Although the definition of $\EGp_N(\D, \h_0)$ favours
$\h_0$ asymmetrically, we will see that, in cases of interest,
$\EGp_N(\D, \h_0)$ does usually contain $\h_0[N-2]$,
and indeed $\h_0[j]$ for $0\leq j\leq N-2$:
see Corollary~\ref{cor:tilt-shift}, Remark~\ref{rem:induced} and Corollary~\ref{cor:sink-source}.

When $\D=\D(Q)$, for a quiver $Q$, we will shorten $\D(Q)$ to $Q$
in the notation for exchange graphs,
e.g. write $\EGp(Q)$ for the principal component $\EGp(\D(Q))$ containing the canonical heart $\nzero$.
Note that, even for an acyclic quiver, $\EGp(Q)$ is usually a proper subgraph of $\EG(Q)$.
However, if $Q$ is a Dynkin quiver, then the two are equal, by a result
of Keller-Vossieck \cite{KV};
an alternative proof can be found in \cite[Appendix~A]{Q2}.

\begin{example}\label{ex:flaw}
Let Q be the quiver of type $A_3$ with straight orientation
and simple modules $X,Y,Z$.
A piece of the Auslander-Reiten quiver of $\D(Q)$ is as follows
\[
\xymatrix@R=.7pc@C=.7pc{
   & Z_0 \ar[dr] && W_1 \ar[dr] && X_2 \ar[dr] && Y_2 \ar[dr] && Z_2\\
   \cdots & & U_1 \ar[ur]\ar[dr] && V_1 \ar[ur]\ar[dr] && U_2 \ar[ur]\ar[dr]
   && V_2 \ar[ur]\ar[dr] && \cdots \\
   & X_1 \ar[ur] && Y_1 \ar[ur] && Z_1 \ar[ur] && W_2 \ar[ur] && X_3}
\]
where $M_i=M[i-1]$ for $M\in\Ind\nzero$.
Figure~\ref{fig:1} is the exchange graph $\EGp_3(Q, \nzero)$,
where we denote each heart by a complete set of simples.

\begin{figure}\centering
\begin{tikzpicture}[scale=1.3, rotate=-117, xscale=-1,
 arrow/.style={->,>=stealth,thick},
 midlabel/.style={midway,fill=white}]
\path (0,0) node (x1)  {$_{X_1 Y_1 Z_2}$};
\path (2,1) node (x2)  {$_{X_1 Y_1 Z_1}$};
\path (-2,1) node (x3) {$_{X_2 U_1 Z_2}$};
\path (0,2) node (x4)  {$_{X_2 U_2 Z_1}$};
\path (0,4) node (x5)  {$_{W_1 Y_1 U_2}$};
\path (4,5) node (x6)  {$_{X_1 Y_2 V_1}$};
\path (2,5) node (x7)  {$_{X_2 Y_2 W_1}$};
\path (-2,5) node (x8) {$_{W_2 Y_1 Z_1}$};
\path (-4,5) node (x9) {$_{U_2 Y_1 Z_2}$};
\path (0,6) node  (x10)  {$_{V_1 Y_2 W_2}$};
\path (0,8) node  (x11) {$_{X_2 V_2 Z_1}$};
\path (2,9) node  (x12) {$_{X_1 V_2 Z_1}$};
\path (-2,9) node (x13) {$_{X_2 Y_2 Z_2}$};
\path (0,10) node  (x14){$_{X_1 Y_2 Z_2}$};

\draw[arrow] (x2)
    edge node[midlabel] {\tiny{$Z_1$}} (x1)
    edge node[midlabel] {\tiny{$X_1$}} (x4)
    edge node[midlabel] {\tiny{$Y_1$}} (x6);
\draw[arrow] (x1)
    edge node[midlabel] {\tiny{$X_1$}} (x3)
    edge [bend right=13, dashed] node[midlabel] {\tiny{$Y_1$}} (x14);
\draw[arrow] (x4)
    edge node[midlabel] {\tiny{$U_1$}} (x5)
    edge node[midlabel] {\tiny{$Z_1$}} (x3);
\draw[arrow] (x3)
    edge node[midlabel] {\tiny{$U_1$}} (x9);
\draw[arrow] (x5)
    edge node[midlabel] {\tiny{$Y_1$}} (x7)
    edge node[midlabel] {\tiny{$W_1$}} (x8);
\draw[arrow] (x6)
    edge node[midlabel] {\tiny{$X_1$}} (x7)
    edge node[midlabel] {\tiny{$V_1$}} (x12);
\draw[arrow] (x7)
    edge node[midlabel] {\tiny{$W_1$}} (x10);
\draw[arrow] (x8)
    edge node[midlabel] {\tiny{$Y_1$}} (x10)
    edge node[midlabel] {\tiny{$Z_1$}} (x9);
\draw[arrow] (x9)
    edge node[midlabel] {\tiny{$Y_1$}} (x13);
\draw[arrow] (x10)
    edge node[midlabel] {\tiny{$V_1$}} (x11);
\draw[arrow] (x11)
    edge node[midlabel] {\tiny{$Z_1$}} (x13);
\draw[arrow] (x14)
    edge node[midlabel] {\tiny{$X_1$}} (x13);
\draw[arrow] (x12)
    edge node[midlabel] {\tiny{$X_1$}} (x11)
    edge node[midlabel] {\tiny{$Z_1$}} (x14);
\end{tikzpicture}
\caption{The exchange graph $\EGp_3(Q, \nzero)$ for $Q$ of type $A_3$.}
\label{fig:1}
\end{figure}
\end{example}

\subsection{Finite hearts}

For any heart $\h$ in a triangulated category $\D$, we denote by
$\Sim\h$ a complete set of non-isomorphic simples in $\h$.

\begin{definition}
We say that a heart $\h$ is
\begin{itemize}
\item \emph{finite},
  if $\Sim\h$ is a finite set which generates $\h$ by means of extensions,
  i.e. every object $M$ in $\h$ has a finite filtration with simple factors.
  Note that, by the Jordan-H\"{o}lder Theorem, these factors are uniquely
  determined up to reordering.
\item \emph{rigid} if every simple $S$ in $\h$ is rigid,
 i.e. $\Ext^1(S,S)=0$.
\end{itemize}
Note that, if $\h$ is finite, then the classes of the simples in $\Sim\h$
form a basis of the Grothendieck group $\Grot(\h)=\Grot(\D)$.
\end{definition}

For example, when $Q$ is acyclic,
the canonical heart $\nzero$ in $\EGp(Q)$ is finite and rigid.
Our main interest is in the part of the exchange graph that contains finite hearts.
We also know, from Lemma~\ref{lem:simpletilt}, that we can tilt with respect
rigid simples, so a rigid heart is one for which we can tilt with respect to all simples.
Thus, to see how adjacent hearts in the exchange graph are related, we begin by determining how the simples of a finite heart change under simple tilting.

\begin{proposition}\label{pp:fini}
In any triangulated category $\D$,
let $S$ be a rigid simple in a finite heart $\h$.
Then after a forward or backward simple tilt
(Definition~\ref{def:simpletilt}) the new simples are
\begin{eqnarray}
  \Sim\tilt{\h}{\sharp}{S} &=&
  \{S[1]\}\;\cup\;\{\tilt{\psi}{\sharp}{S}(X)\mid X\in \Sim\h,X\neq S\},
\label{eq:psp-1} \\
  \Sim\tilt{\h}{\flat}{S} &=&
  \{S[-1]\}\;\cup\;\{\tilt{\psi}{\flat}{S}(X)\mid X\in \Sim\h,X\neq S\},
\label{eq:psp-2}
\end{eqnarray}
where
\begin{eqnarray}
\label{eq:psi+}
  \tilt{\psi}{\sharp}{S}(X)
&=&\Cone\left(X\to S[1]\otimes\Ext^1(X, S)^* \right) [-1],
\\ \label{eq:psi-}
  \tilt{\psi}{\flat}{S}(X)
&=&\Cone \left(S[-1]  \otimes  \Ext^1(S, X)\to X \right).
\end{eqnarray}
Thus $\tilt{\h}{\sharp}{S}$ and $\tilt{\h}{\flat}{S}$ are also finite, with
$\# \Sim\tilt{\h}{\sharp}{S} = \#\Sim\h = \#\Sim\tilt{\h}{\flat}{S}$.
\end{proposition}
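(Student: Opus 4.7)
I will give the plan for the forward case; the backward one is dual. The core strategy is to leverage the torsion pair structures from Lemma~\ref{lem:simpletilt} and Proposition~\ref{pp:HRS}. Rigidity of $S$ gives $\<S\>=\Add(S)$ and a torsion pair $\<\Add(S),\hua{T}\>$ in $\h$; then $\tilt{\h}{\sharp}{S}$ carries the torsion pair $\<\hua{T},\Add(S)[1]\>$, so every object of $\tilt{\h}{\sharp}{S}$ sits in a short exact sequence $0 \to S[1]^a \to E \to T \to 0$ with $T \in \hua{T}$.

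For the candidate simples: $S[1]$ is simple because its subobjects lie in the semisimple torsion part $\Add(S[1])$. For each $X \in \Sim\h$ with $X\neq S$, taking $\h$-cohomology of the defining triangle $Y_X \to X \to S[1]\otimes V \to Y_X[1]$ (with $V=\Ext^1(X,S)^*$) yields the universal extension $0 \to S\otimes V \to Y_X \to X \to 0$ in $\h$; applying $\Hom(-,S)$, the universal property makes the connecting map $V^* = \Hom(S\otimes V,S) \to \Ext^1(X,S)$ the canonical identification, so combined with $\Ext^1(S,S)=0$ this gives $\Hom(Y_X,S)=0$ and $\Ext^1(Y_X,S)=0$, placing $Y_X$ in $\hua{T}\subset\tilt{\h}{\sharp}{S}$. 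Simplicity of $Y_X$ follows because any proper subobject in $\tilt{\h}{\sharp}{S}$ lies in $\hua{T}$, and analyzing $\h$-subs via the $S$-socle $S\otimes V$, such a sub either lies in $\Add(S)\cap\hua{T}=0$ or corresponds to a subspace $W\subsetneq V$, but the universal property forces the pullback extension class to be a section of the inclusion $W\hookrightarrow V$, forcing $W=V$.

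For exhaustion, a simple $T$ of $\tilt{\h}{\sharp}{S}$ lies either in the torsion part (giving $S[1]$) or in $\hua{T}$. Given $T\in\hua{T}$ simple, distinct-simples considerations force $\Ext^1_\h(T,S)=\Hom_{\tilt{\h}{\sharp}{S}}(T,S[1])=0$. Pick any top $\h$-quotient $T\twoheadrightarrow X$ (necessarily $X\neq S$) with kernel $T_1$; the $\Hom(-,S)$ long exact sequence gives an isomorphism $\Hom(T_1,S)\cong\Ext^1(X,S)$, so the maximal $\hua{F}$-quotient $T_1\twoheadrightarrow S\otimes V$ has the correct dimension, and the pushout $T/T_1^\hua{T}$ has extension class equal to the identity in $\End(V)$, yielding $T/T_1^\hua{T}\cong Y_X$. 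This gives a short exact sequence $0\to T_1^\hua{T} \to T \to Y_X \to 0$ in $\tilt{\h}{\sharp}{S}$; simplicity of $T$ forces $T_1^\hua{T}=0$, so $T\cong Y_X$. The same construction inductively filters any $T\in\hua{T}$ by $Y_X$'s (since $T_1^\hua{T}$ has strictly smaller $\h$-length), so together with the $\Add(S[1])$-part this shows $\tilt{\h}{\sharp}{S}$ is generated by the claimed simples under extension, hence finite.

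The main obstacle is the pushout identification $T/T_1^\hua{T} \cong Y_X$: it requires carefully tracing the extension class through the pushout along the canonical quotient $T_1 \to S\otimes V$ and recognizing it as $\mathrm{id}_V\in\End(V)$ via the connecting-map isomorphism $\Hom(T_1,S)\cong\Ext^1(X,S)$. This single identification underpins both the exhaustion argument and the finiteness of $\tilt{\h}{\sharp}{S}$.
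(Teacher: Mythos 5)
Your identification of the simples in both directions is sound, and your exhaustion step takes a genuinely different route from the paper: the paper, after checking that the $n$ candidates are simple, concludes completeness by a Grothendieck group argument (the candidate classes form a basis of $\hua{K}(\D)\cong\hua{K}(\tilt{\h}{\sharp}{S})\cong\ZZ^n$), whereas you argue directly that any simple $T$ of $\tilt{\h}{\sharp}{S}$ lying in $\hua{T}$ is isomorphic to some $\tilt{\psi}{\sharp}{S}(X)$, using $\Ext^1(T,S)=\Hom_{\tilt{\h}{\sharp}{S}}(T,S[1])=0$ to make the connecting map $\Hom(T_1,S)\to\Ext^1(X,S)$ an isomorphism and identifying the pushout $T/T_1^{\hua{T}}$ with the universal extension. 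That identification is correct and is in fact the mirror image of the paper's middle step (the paper identifies a simple $\tilt{\h}{\sharp}{S}$-subobject of $X$ with $\tilt{\psi}{\sharp}{S}(X)$; you run the same computation from a simple top of $T$), and it buys you completeness without appealing to $\hua{K}$-theory.

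However, the final finiteness step has a genuine gap as written. You claim that ``the same construction inductively filters any $T\in\hua{T}$ by $Y_X$'s''. This fails twice. First, for non-simple $T\in\hua{T}$ the vanishing $\Ext^1(T,S)=0$ (which you deduced from simplicity of $T$) is not available, so the connecting map $\Hom(T_1,S)\to\Ext^1(X,S)$ is only injective and $T/T_1^{\hua{T}}$ is in general a proper quotient $Y_X/(S\otimes K)$ of the universal extension, not $Y_X$. Second, no filtration of objects of $\hua{T}$ by the $Y_X$'s alone can exist: whenever $\Ext^1(X,S)\neq0$ the object $X$ itself lies in $\hua{T}$ and sits in the short exact sequence $0\to Y_X\to X\to S[1]\otimes V\to 0$ in $\tilt{\h}{\sharp}{S}$, so $S[1]$ occurs among its composition factors. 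The induction is repairable: one has $0\to Y_X\to Y_X/(S\otimes K)\to S[1]\otimes K\to 0$ in $\tilt{\h}{\sharp}{S}$, so $T$ is filtered by $T_1^{\hua{T}}$ (of smaller $\h$-length), $Y_X$ and copies of $S[1]$; but this must be said, since the step as stated is false. A smaller point of the same kind: in your simplicity argument for $Y_X$ you treat an $\tilt{\h}{\sharp}{S}$-subobject $A\subseteq Y_X$ (which lies in $\hua{T}$) as an $\h$-subobject, whereas the $\h$-kernel of $A\to Y_X$ can a priori be a nonzero sum of copies of $S$; ruling out the corresponding $\Add(S[1])$-part of the $\tilt{\h}{\sharp}{S}$-cokernel uses exactly the vanishing $\Ext^1(Y_X,S)=0$ that you established, so the patch is available but the reduction should be made explicit.
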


\begin{proof}
We only deal with the case for forward tilting;
the backwards case is similar.
Let $\torpr{\hua{F}}{\hua{T}}$ be the torsion pair in $\h$ whose forward tilt
yields $\tilt{\h}{\sharp}{S}$.
Any simple in $\tilt{\h}{\sharp}{S}$ is either in $\hua{T}$ or $\hua{F}[1]$.
Since $S$ has no self extension,
we have $\hua{F}=\{S^m \mid m\in \NN\}$.
Furthermore, choose any simple quotient $S_0$ of $S[1]$ in $\tilt{\h}{\sharp}{S}$.
$S_0$ cannot be in $\hua{T}$ since $\Hom( \hua{F}[1], \hua{T})=0$.
Thus $S_0\in \hua{F}[1]$ which implies $S[1]=S_0$,
i.e. $S[1]\in\Sim\tilt{\h}{\sharp}{S}$.

Let $X\ncong S$ be any other simple in $\h$,
so that $X\in\hua{T}$.
Let $T$ be a simple \New{subobject} of $X$ in $\tilt{\h}{\sharp}{S}$ and
$f: T \to X$ be a non-zero map.
Since $\Hom( S[1], X)=0$,
\New{we know $T\not\in\hua{F}[1]$, so $T\in\hua{T}$.}
Because \New{$X$} is simple in $\h$
and $T$ is simple in $\tilt{\h}{\sharp}{S}$,
there are short exact sequences
\begin{equation}\label{eq:sesL}
    0 \to L \to T \xrightarrow{f} X \to 0
\qquad\text{and}\qquad
    0 \to T \xrightarrow{f} X \xrightarrow{g} M \to 0
\end{equation}
in $\h$ and $\tilt{\h}{\sharp}{S}$ respectively.
Thus $L=M[-1]$.
On the other hand $\h^\sharp[-1] \cap \h =\hua{F}$,
which implies $L\in \hua{F}$ \New{and $M\in \hua{F}[1]$ and so, canonically,}
\begin{equation}\label{eq:canon}
L \cong S \otimes \Hom(L, S)^*
\qquad\text{and}\qquad
 \New{M \cong S[1] \otimes \Hom^1(M, S)^*.}
\end{equation}

Applying $\Hom(-,S)$ to \eqref{eq:sesL}
\New{and noting that $T\in \hua{T}= \leftperp{S}$,
while $T$ and $S[1]$ are non-isomorphic simples in $\tilt{\h}{\sharp}{S}$,}
we get
\begin{equation}\label{eq:g*}
    0=\Hom(T, S) \to \Hom^1(M, S) \xrightarrow{g^*} \Hom^1(X, S) \to \Hom^1(T, S)=0
\end{equation}
and so $g^*$ is an isomorphism.
\New{By naturality of the (horizontal) universal maps, the following square commutes
\begin{equation}\label{eq:universal}
    \xymatrix@C=2.7pc{
    X \ar[r]\ar[d]^g&  S[1]\otimes\Hom^1(X, S)^* \ar[d]^{g_*}\\
    M \ar[r]^{\cong\qquad\qquad}& S[1]\otimes\Hom^1(M, S)^*      }
%
%
\end{equation}
and, since $g_*$ is induced by $g^*$ in \eqref{eq:g*}, it is also an isomorphism.
Thus we have identified $g\colon X\to M$ with the
universal map $X\to S[1]\otimes \Hom^1(X, S)^* $ and hence
$\tilt{\psi}{\sharp}{S}(X)$ is identified with $\Cone(g)[-1]=T$,
which is simple, as required.}

Now, if $\h$ is finite, with $\#\Sim\h=n$,
then the RHS of \eqref{eq:psp-1} contains
$n$ simples in $\tilt{\h}{\sharp}{S}$,
whose classes form a new basis of the Grothendieck group
$\Grot(\D)\cong\Grot(\h)\cong\ZZ^n$.
Hence these new simples of $\tilt{\h}{\sharp}{S}$
are non-isomorphic and must be a complete set of simples
as also $\Grot(\D)\cong \Grot(\tilt{\h}{\sharp}{S})$.
\end{proof}

A first useful consequence is the following.

\begin{corollary}\label{cor:tilt-shift}
 If $Q$ is an acyclic quiver,
 then there is a sequence of forward tilts from $\nzero$ to $\nzero[1]$.
 Hence
 $\nzero[k]\in\EGp_N(Q, \nzero)$, for $0\leq k\leq N-2$, and
 $\nzero[k]\in\EGp(Q)$, for all $k\in\ZZ$.
 \end{corollary}

\begin{proof}
Since $Q$ is acyclic, we can write
$\Sim\nzero=\{S_1,\ldots,S_n\}$, ordered so that
\begin{equation}\label{eq:ordered}
  \Ext^1(S_j,S_i)=0,\quad\text{for $1\leq i<j\leq n$.}
\end{equation}
Then Proposition~\ref{pp:fini} implies that forward tilting $\nzero$ by $S_1$,
gives a heart $\h$ with $\Sim\h=\{S_2,\ldots,S_n,S_1[1]\}$,
whose simples still satisfy \eqref{eq:ordered} in this new order.
Hence by iterated forward tilting $\nzero$
with respect to $S_1,\ldots,S_n$ we obtain $\nzero[1]$, as claimed.

Since this is a sequence of forward tilts, all the hearts in the sequence lie between
$\nzero$ and $\nzero[1]$, i.e. in the interval $\EG_3(Q, \nzero)$,
so $\nzero[1]$ is in its principal component $\EGp_3(Q, \nzero)$,
giving the second claim in the case $N=3$ (the case $N=2$ being trivial).

By shifting the sequence, we can get from $\nzero[1]$ to $\nzero[2]$, and so on,
by forward tilts, to get the second claim for general $N$.
We also get from $\nzero$ to $\nzero[-1]$, and so on, by backwards tilts,
to get the last claim for all $k\in\ZZ$.
\end{proof}

We next identify, again for a general triangulated category $\D$,
elementary criteria for when a heart
$\h$ is in the interval $\EG_N(\D,\h_0)$,
that is, $\h_0\leq\h\leq\h_0[N-2]$ (Definition~\ref{def:eg}),
and also when its forward and backward tilts remain in this interval.
Note that, for the later, the case $N=3$ is already covered by Lemma~\ref{pp:either-or}.

\begin{lemma}\label{lem:parallel}
Let $\h_0, \h\in \EG(\D)$ be finite hearts.
Then $\h\in\EG_N(\D,\h_0)$
if and only if, for all $S\in\Sim\h$,
\[
  \Ho{m}(S)=0,\quad\text{for $m\notin[0,N-2]$},
\]
where the homology $\Ho{\bullet}$ is with respect to $\h_0$.
Moreover,
\New{if $\h\in\EG_N(\D,\h_0)$, then,}
for any rigid $S\in\Sim\h$, 
\StartEnum
\item $\tilt{\h}{\flat}{S}\in\EG_N(\D,\h_0)$
  if and only if $\Ho{0}(S)=0$,
\item $\tilt{\h}{\sharp}{S}\in\EG_N(\D,\h_0)$
  if and only if $\Ho{N-2}(S)=0$.
\StopEnum
\end{lemma}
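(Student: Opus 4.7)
The plan is first to translate the partial-order condition $\h_0[1]\leq\h\leq\h_0[N-1]$ into a homological vanishing condition. Let $\hua{P}_0$ denote the t-structure of $\h_0$. From the canonical filtration \eqref{eq:canonfilt}, an object $M$ lies in $\hua{P}_0[k]$ iff $\Ho{m}(M)=0$ for $m<k$, and in $\hua{P}_0^\perp[k]$ iff $\Ho{m}(M)=0$ for $m\geq k$. Since $\h_0[1]\leq\h$ is equivalent to $\h\subset\hua{P}_0[1]$, and $\h\leq\h_0[N-1]$ is equivalent to $\h\subset\hua{P}_0^\perp[N]$, the condition $\h\in\EG_N(\D,\h_0)$ says precisely that $\Ho{m}(E)=0$ for all $E\in\h$ and all $m\notin(0,N)$. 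For the first statement it then suffices to note that, since $\h$ is finite, every $E\in\h$ is an iterated extension of simples, and the $\Ho{m}$ are homological functors, so long exact sequences reduce this vanishing on $\h$ to the same condition on $\Sim\h$.

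For the forward tilt, Lemma~\ref{lem:tiltorder} gives $\h\leq\tilt{\h}{\sharp}{S}$, so $\h_0[1]\leq\tilt{\h}{\sharp}{S}$ is automatic, and only the upper bound $\tilt{\h}{\sharp}{S}\subset\hua{P}_0^\perp[N]$ needs attention. The \emph{only if} direction is then immediate from $S[1]\in\Sim\tilt{\h}{\sharp}{S}$ (Proposition~\ref{pp:fini}), which forces $\Ho{N}(S[1])=\Ho{N-1}(S)=0$. For \emph{if}, rather than computing the homology of each new simple $\tilt{\psi}{\sharp}{S}(X)$ directly, I would invoke the HRS description (Proposition~\ref{pp:HRS}): every $E\in\tilt{\h}{\sharp}{S}$ fits in a triangle $S^m[1]\to E\to T'\to S^m[2]$ with $T'\in\hua{T}\subset\h$. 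Here $T'\in\hua{P}_0^\perp[N]$ because $\h\in\EG_N(\D,\h_0)$, and $S[1]\in\hua{P}_0^\perp[N]$ because the hypothesis $\Ho{N-1}(S)=0$, together with the already-known $\Ho{m}(S)=0$ for $m\geq N$, gives $\Ho{m}(S[1])=\Ho{m-1}(S)=0$ for $m\geq N$; the long exact sequence of the triangle then puts $E\in\hua{P}_0^\perp[N]$.

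The backward tilt is dual: Lemma~\ref{lem:tiltorder} gives $\tilt{\h}{\flat}{S}\leq\h$, so the upper bound is automatic, and one tests $\tilt{\h}{\flat}{S}\subset\hua{P}_0[1]$. The \emph{only if} uses $S[-1]\in\Sim\tilt{\h}{\flat}{S}$ and $\Ho{0}(S[-1])=\Ho{1}(S)=0$, while for \emph{if} the HRS description places every $E\in\tilt{\h}{\flat}{S}$ in a triangle $S^m[-1]\to E\to F'\to S^m$ with $F'\in\hua{F}\subset\h\subset\hua{P}_0[1]$, and $\Ho{1}(S)=0$ puts $S[-1]\in\hua{P}_0[1]$, so the long exact sequence again concludes. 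The step most deserving of care is recognising that the HRS description of the tilted heart is the right tool for the \emph{if} directions---it sidesteps the awkward task of controlling the homology of each new simple $\tilt{\psi}{\sharp}{S}(X)$ produced by Proposition~\ref{pp:fini} one by one.
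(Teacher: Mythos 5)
Your argument is correct, and the first statement together with both \emph{only if} directions is handled exactly as in the paper: translate $\h_0[1]\leq\h\leq\h_0[N-1]$ into $\h\subset\hua{P}_0[1]\cap\hua{P}_0^\perp[N]$, reduce to simples by extension-closure, and read off necessity from $S[1]$ (resp.\ $S[-1]$) being a simple of the tilted heart. For sufficiency you genuinely depart from the paper: there, the change-of-simples formulae \eqref{eq:psp-1} and \eqref{eq:psp-2} of Proposition~\ref{pp:fini} are invoked, each new simple $\tilt{\psi}{\sharp}{S}(X)$ being an extension of $X$ by copies of $S$ via \eqref{eq:psi+}, so that the first-part criterion applies directly to the tilted heart; you instead bound the whole tilted heart at once through the HRS description of Proposition~\ref{pp:HRS}, writing each object as an extension of an object of $\h$ and copies of $S[\pm1]$ and using that the shifted t-structures are extension-closed. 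Both routes are short; yours bypasses the explicit new simples (and does not need finiteness of the tilted heart for this step), while the paper's stays at the level of simples, which is the form it uses later. Two minor remarks: in the backward case the canonical sequence in $\tilt{\h}{\flat}{S}$ (torsion pair $\<\hua{T}[-1],\hua{F}\>$) has the $\hua{F}$-part as subobject and $S^m[-1]$ as quotient, the opposite order to the triangle you wrote --- harmless, since you only use extension-closure; and, exactly as in the paper's own proof, your \emph{moreover} arguments implicitly assume $\h\in\EG_N(\D,\h_0)$ (needed both for the automatic bound on one side and for the vanishing $\Ho{m}(S)=0$ outside $(0,N)$), which is indeed how the lemma is applied in the sequel.
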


\begin{proof}
The condition that $\Ho{m}(S)=0$, for $m< 0$, is equivalent to
$S\in \hua{P}_0$ and as $\h$ is generated by its simples,
this is equivalent to $\h\subset \hua{P}_0$, i.e. $\h_0\leq\h$.
The other inequality is similar.
The necessity in the second assertions is then immediate.

As $\h$ is finite,
\eqref{eq:psp-1} and \eqref{eq:psp-2} in Proposition~\ref{pp:fini}
determine the simples in
$\tilt{\h}{\sharp}{S}$ and $\tilt{\h}{\flat}{S}$
and thus the sufficiency in the second assertions also
follows from the first one.
\end{proof}

\begin{corollary}\label{cor:ss}
Let $\h_0$ and $\h$ be finite hearts in $\EG(\D)$,
\New{with $\h\in\EG_N(\D,\h_0)$.}
Then, for any rigid $S\in\Sim\h$, we have
\StartEnum
\item $\tilt{\h}{\flat}{S}\in\EG_N(\D,\h_0)$
  if and only if $\Hom(S, T)=0$, for every $T\in\Sim\h_0$,
\item $\tilt{\h}{\sharp}{S}\in\EG_N(\D,\h_0)$
  if and only if $\Hom(T, S)=0$, for every $T\in\Sim\h_0[N-2]$.
\StopEnum
\end{corollary}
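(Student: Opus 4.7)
The plan is to reduce to Lemma~\ref{lem:parallel} and translate its homology conditions into the stated $\Hom$-vanishings. That lemma already equates $\tilt{\h}{\flat}{S}\in\EG_N(\D,\h_0)$ with $\Ho{1}(S)=0$ and $\tilt{\h}{\sharp}{S}\in\EG_N(\D,\h_0)$ with $\Ho{N-1}(S)=0$, so it suffices to prove the two equivalences
\[
  \Ho{1}(S)=0 \iff \Hom(S,S_0[1])=0 \text{ for every }S_0\in\Sim\h_0,
\]
\[
  \Ho{N-1}(S)=0 \iff \Hom(S_0[N-1],S)=0 \text{ for every }S_0\in\Sim\h_0.
\]
As both statements implicitly assume $\h\in\EG_N(\D,\h_0)$ (so that Lemma~\ref{lem:parallel} applies), I may also use that $S$ has homology concentrated in degrees $1,\dots,N-1$.

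For the first equivalence, I would consider the canonical triangle of $S$ with respect to the t-structure $\hua{P}_0[2]$,
\[
  S^{[2]}\to S\to S^{\perp[2]}\to S^{[2]}[1],
\]
with $S^{[2]}\in\hua{P}_0[2]$ and $S^{\perp[2]}\in\hua{P}_0^\perp[2]$. Since the homology of $S$ lives in degrees $1,\dots,N-1$, the complex $S^{\perp[2]}$ retains only the degree-$1$ part, and hence $S^{\perp[2]}\cong\Ho{1}(S)[1]$. Setting $T=S_0[1]$, both $T$ and $T[-1]=S_0$ lie in $\hua{P}_0^\perp[2]$ while $S^{[2]}\in\hua{P}_0[2]$, so the two outer terms of the long exact sequence obtained by applying $\Hom(-,T)$ vanish by t-structure orthogonality, leaving
\[
  \Hom(S,T)\cong\Hom\bigl(\Ho{1}(S)[1],S_0[1]\bigr)=\Hom\bigl(\Ho{1}(S),S_0\bigr).
\]
The forward direction is then immediate; for the reverse I would use the finiteness of $\h_0$, since a nonzero $\Ho{1}(S)\in\h_0$ admits a simple quotient $S_0\in\Sim\h_0$, witnessing a nonzero element in $\Hom(\Ho{1}(S),S_0)$ and hence in $\Hom(S,S_0[1])$.

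For the second equivalence, I would run the dual argument: apply $\Hom(T,-)$ with $T=S_0[N-1]$ to the canonical triangle $S^{[N-1]}\to S\to S^{\perp[N-1]}$ for the t-structure $\hua{P}_0[N-1]$. The same homology bound forces $S^{[N-1]}\cong\Ho{N-1}(S)[N-1]$, and t-structure orthogonality again collapses the long exact sequence to an isomorphism $\Hom(S_0[N-1],S)\cong\Hom(S_0,\Ho{N-1}(S))$. The reverse direction then closes by extracting a simple subobject of $\Ho{N-1}(S)$ in $\h_0$, once again using its finiteness. The only real technical point throughout is verifying that the `perp' (respectively `non-perp') factor of each canonical triangle is concentrated in a single degree, but this is an immediate consequence of the homology bounds on $S$ inherited from $\h\in\EG_N(\D,\h_0)$; everything else is bookkeeping with t-structure orthogonalities.
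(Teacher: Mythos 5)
Your proposal is correct and follows essentially the same route as the paper: its triangle $H\to S'\to S\to H[1]$ is exactly your truncation triangle for the shifted t-structure $\hua{P}_0[2]$ (resp.\ $\hua{P}_0[N-1]$), and the identification $\Hom(S,T)\cong\Hom(\Ho{1}(S),S_0)$ (resp.\ $\Hom(T,S)\cong\Hom(S_0,\Ho{N-1}(S))$) followed by Lemma~\ref{lem:parallel} is the same argument. You merely make explicit two points the paper leaves implicit, namely the standing hypothesis $\h\in\EG_N(\D,\h_0)$ and the use of finiteness of $\h_0$ to extract a simple quotient/subobject in the reverse directions.
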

\begin{proof}
By the first part of Lemma~\ref{lem:parallel}, we have
$\Ho{m}(S)=0$ for $m\notin[0,N-2]$,
where $\Ho{\bullet}$ is with respect to $\h_0$.
Let $\Ho{0}(S)=H\in \h_0$. Then there is a triangle
\[
  H[-1] \to S' \to S \to H,
\]
where $\Ho{\leq0}S'=0$, i.e. $S'\in \hua{P}_0[1]$.
Hence, for any $T\in\Sim\h_0\subset \hua{P}^\perp_0[1]$,
we have $\Hom^{\leq0}(S', T)=0$ and so $\Hom(S, T)=\Hom(H, T)$.
This implies that $\Hom(S, T)=0$ for any $T\in\Sim\h_0$ if and only if $H=0$.
Then the first claim follows from
the second part of Lemma~\ref{lem:parallel}.

For the second claim, let $\Ho{N-2}(S)=H\in \h_0$.
Then there is a triangle
\[
  H[N-2] \to S \to S' \to H[N-1].
\]
Now, for any $T\in\Sim\h_0[N-2]$, we have
$\Hom^{\leq0}(T,S')=0$ and so $\Hom(T, S)=\Hom(T,H[N-2])$
and the claim follows as before.
\end{proof}

\subsection{Tilting and mutation of projectives}

Given a finite heart $\h$ with simples $\Sim\h=\{S_1,\ldots,S_n\}$,
it is natural to seek a set of projectives $\{P_1,\ldots,P_n\}$
\New{of $\h$ (as in Definition~\ref{def:proj}),}
which is \emph{dual} to $\Sim\h$, in the sense that
\begin{equation}\label{eq:dualPS}
    \dim\Hom(P_i, S_j)=\delta_{ij}.
\end{equation}
Furthermore, we expect that these are a complete set of indecomposable projectives.
This property holds, of course, for the canonical heart $\nzero$ in $\D(Q)$
and we will show that it is preserved under simple tilting.
In this way, we see in particular that, for all $\h\in\EGp(Q)$, the set
$\Proj\h$ has the right number of elements to make it a silting set,
by Lemma~\ref{lem:inj}.

We first show that this duality implies the familiar relationship
(e.g. in a module category)
between irreducible maps of projectives and extensions of simples.

\begin{proposition}\label{pp:irr=ext}
Let $\h$ be a finite heart in a triangulated category $\D$,
with simples $\Sim\h=\{S_1,\ldots,S_n\}$
and a dual set of projectives $\{P_1,\ldots,P_n\}$,
in the sense of \eqref{eq:dualPS}.
Then the $P_j$ are non-isomorphic, indecomposable
and span $\h$.
Furthermore
\begin{equation}\label{eq:irr=ext}
    \Irr(P_i, P_j) \cong \Ext^1(S_j,S_i)^*,
\end{equation}
where $\Irr(P_i, P_j)$ are the irreducible maps in the additive subcategory
$\Add\{P_1,\ldots,P_n\}$.
\end{proposition}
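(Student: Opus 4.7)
The plan is to split the statement into three pieces. The preliminary assertions (non-isomorphism, indecomposability, spanning) follow from the duality $\dim\Hom(P_i,S_j)=\delta_{ij}$ together with the exactness of $\Hom(P_i,-)$ on $\h$ (immediate from Proposition~\ref{pp:proj}). Since $\h$ is finite, every $M\in\h$ has a composition series by simples, so $\dim\Hom(P_i,M)$ equals the multiplicity of $S_i$ in that series; spanning then follows from Proposition~\ref{pp:proj} and non-isomorphism is clear. For indecomposability, a splitting $P_i=A\oplus B$ forces one summand, say $A$, to have $\Hom(A,S_j)=0$ for every $j$; by exactness this extends to $\Hom(A,M)=0$ for all $M\in\h$, and then Proposition~\ref{pp:proj} applied to the projective summand $A$ gives $\Hom^{-k}(A,N)=\Hom(A,\Ho{k}(N))=0$ for every $N\in\D$ and every $k$; setting $N=A$ forces $\id_A=0$, hence $A=0$.

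For the $\Irr$-$\Ext^1$ identification I would first transport the problem into $\h$. Setting $\bar{P}_j:=\Ho{0}(P_j)\in\h$, the canonical t-truncation triangle, with connective cover in $\hua{P}[1]$, combined with $\h\subset\hua{P}^\perp[1]$, shows that $\Hom_\D(P_j,M)\cong\Hom_\h(\bar{P}_j,M)$ for every $M\in\h$. Hence $\bar{P}_j$ is an indecomposable projective of the abelian category $\h$ with $\operatorname{top}(\bar{P}_j)=S_j$, and $\Add\{P_k\}\simeq\Add\{\bar{P}_k\}$ in a composition-preserving way, identifying the two $\Irr$-spaces. Moreover, since $\h$ is closed under extensions inside $\D$, any class in $\Hom_\D(S_j,S_i[1])$ is represented by an extension within $\h$, so $\Ext^1_\D(S_j,S_i)=\Ext^1_\h(S_j,S_i)$.

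The final step is the classical projective-cover computation inside $\h$. From $0\to K_j\to\bar{P}_j\to S_j\to 0$, applying $\Hom(-,S_i)$ yields an isomorphism $\Hom(K_j,S_i)\cong\Ext^1_\h(S_j,S_i)$, since $\Hom(\bar{P}_j,S_i)\to\Hom(K_j,S_i)$ is zero in every case (for $i\ne j$ both flanking terms vanish; for $i=j$ the map $\Hom(S_j,S_j)\to\Hom(\bar{P}_j,S_j)$ is an iso by Schur and the top-duality). Since $\h$ has finite composition length, a Nakayama-style argument produces a projective cover $\sigma\colon\bigoplus_k\bar{P}_k^{c_{kj}}\to K_j$ with $c_{kj}$ the multiplicity of $S_k$ in $\operatorname{top}(K_j)$. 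One then identifies $\operatorname{rad}(\bar{P}_i,\bar{P}_j)=\Hom(\bar{P}_i,K_j)$ (using local-ness of the endomorphism rings) and $\operatorname{rad}^2(\bar{P}_i,\bar{P}_j)=\Hom(\bar{P}_i,\operatorname{rad}(K_j))$, giving $\Irr(\bar{P}_i,\bar{P}_j)\cong\Hom(\bar{P}_i,\operatorname{top}(K_j))$, which is canonically dual to $\Hom(\operatorname{top}(K_j),S_i)=\Hom(K_j,S_i)=\Ext^1(S_j,S_i)$ via the composition pairing
\[
\Hom(\bar{P}_i,\operatorname{top}(K_j))\otimes\Hom(\operatorname{top}(K_j),S_i)\longrightarrow\Hom(\bar{P}_i,S_i)=\k
\]
through the semisimple object $\operatorname{top}(K_j)=\bigoplus_k S_k^{c_{kj}}$. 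The main obstacle will be the equality $\operatorname{rad}^2=\Hom(-,\operatorname{rad}(K_j))$: the inclusion $\subset$ follows from observing that any $\bar{P}_k\to K_j$, composed with the top-map of $K_j$, factors through $\operatorname{top}(\bar{P}_k)=S_k$ and hence vanishes when precomposed with a radical map, while $\supset$ requires lifting any $\bar{P}_i\to\operatorname{rad}(K_j)$ through $\sigma$ and checking that the resulting components all lie in $\operatorname{rad}(\bar{P}_i,\bar{P}_k)$.
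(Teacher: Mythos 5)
Your proposal is correct, but it takes a genuinely different route from the paper's. The first assertions (non-isomorphism, indecomposability, spanning) are handled essentially as in the paper, via \eqref{eq:dualPS} and Proposition~\ref{pp:proj}. For \eqref{eq:irr=ext}, however, the paper never leaves the triangulated category: it forms the syzygy-type objects $\Omega_j=\Cone(P_j\to S_j)[-1]$ and $\Omega_j^i$, checks they lie in $\hua{P}$, and transports the computation to $\mod\End(\mathbf{P})$ via $\Hom(\mathbf{P},-)$, where \eqref{eq:dualPS} identifies $\Hom(\mathbf{P},S_j)$ and $\Hom(\mathbf{P},P_j)$ with the simple and projective $\End(\mathbf{P})$-modules and the Yoneda lemma produces $\Irr(P_i,P_j)\cong\Hom(P_i,\Omega_j)/\Hom(P_i,\Omega_j^i)$. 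You instead truncate into the heart, replacing $P_j$ by $\bar P_j=\Ho{0}(P_j)$ (recall a projective of $\h$ need not lie in $\h$, so this step is genuinely needed), and then run the classical radical/projective-cover argument inside the finite-length abelian category $\h$; this is legitimate, since $\bar P_j$ is a projective object of $\h$ with top $S_j$ and a length category has radicals, semisimple tops, Nakayama and Fitting available. Two points in your outline should be made explicit. First, the asserted equivalence $\Add\{P_k\}\simeq\Add\{\bar P_k\}$ needs not only $\Hom(P_i,M)\cong\Hom(\bar P_i,M)$ for $M\in\h$ but also $\Hom(P_i,P_j)\cong\Hom(P_i,\bar P_j)$, i.e. $\Hom(P_i,P')=0=\Hom^1(P_i,P')$ for the piece $P'\in\hua{P}[1]$ of the truncation triangle; this is immediate from \eqref{eq:projhom}, since $P'$ has homology only in degrees at least $1$. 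Second, the obstacle you flag for $\Hom(\bar P_i,\operatorname{rad}K_j)\subseteq\operatorname{rad}^2(\bar P_i,\bar P_j)$ does close, but only because your $\sigma\colon Q=\bigoplus_k\bar P_k^{c_{kj}}\to K_j$ is minimal: it induces an isomorphism on tops, so $\ker\sigma\subseteq\operatorname{rad}Q$ and hence $\sigma^{-1}(\operatorname{rad}K_j)=\operatorname{rad}Q$; consequently \emph{any} lift of a map $\bar P_i\to\operatorname{rad}K_j$ has image in $\operatorname{rad}Q$, so all its components kill the tops of the $\bar P_k$ and are radical, as required (for an arbitrary surjection from a projective this could fail). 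The trade-off: the paper's proof avoids importing the structure theory of the abelian category $\h$, effectively rebuilding $\operatorname{rad}$ and $\operatorname{rad}^2$ by hand as $\Hom(\mathbf{P},\Omega_j)$ and $\Hom(\mathbf{P},\Omega_j^i)$, whereas yours is more classical and arguably more transparent once the truncation bookkeeping is done.
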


\begin{proof}
Note first that if a projective $P$ satisfies $\Hom(P,S)=0$ for all $S\in \Sim\h$,
then it satisfies $\Hom(P,M)=0$ for all $M\in\h$, as $\Sim\h$ generates $\h$.
But then $\Hom^k(P,M)=0$ for all $M\in\h$ and all $k$, as $P$ is projective
and so, using the canonical filtration \eqref{eq:canonfilt},
$\Hom(P,X)=0$ for all $X\in\D$, which means $P=0$.

Thus, as the duality \eqref{eq:dualPS} means that at most one indecomposable summand of
$P_j$ can have a non-zero map to $S_j$, any other summand would be trivial., i.e. $P_j$ is indecomposable.
The duality immediately implies that the $P_j$ are pairwise non-isomorphic and they span $\h$,
because it is generated by $\Sim\h$.

To prove \eqref{eq:irr=ext}, we start by defining $\Omega_j=\Cone \left(P_j\to S_j \right)[-1]$, so that
we have a triangle
\begin{equation}\label{eq:omega}
  S_j[-1] \xrightarrow{h} \Omega_j \to P_j \to S_j,
\end{equation}
Applying $\Hom(-,S_i)$ to this triangle, for any $S_i$, yields an isomorphism
\begin{equation}\label{eq:hom=ext}
  h^*\colon \Hom(\Omega_j,S_i)\xrightarrow{\cong}\Ext^1(S_j,S_i)
\end{equation}
and tells us that $\Hom(\Omega_j,S_i[-1])=0$, and so $\Hom(\Omega_j,M[-1])=0$, for all $M\in\h$.
Since \eqref{eq:omega} immediately gives $\Hom(\Omega_j,M[-k])=0$, for $k>1$ and all $M\in\h$,
we deduce that $\Omega_j\in\hua{P}$, the t-structure associated to $\h$.
Hence, applying $\Hom(P_i,-)$ to \eqref{eq:omega} yields a short exact sequence
\begin{equation}\label{eq:P-omega}
  0\to \Hom(P_i,\Omega_j)\to \Hom(P_i,P_j)\to\Hom(P_i,S_j)\to 0.
\end{equation}
Next define $\Omega^i_j=\Cone \left(\Omega_j\to S_i\otimes\Hom(\Omega_j,S_i)^*\right)[-1]$, so that
we have a triangle
\begin{equation}\label{eq:omega-ij}
  S_i\otimes\Hom(\Omega_j,S_i)^* [-1] \to \Omega^i_j \to \Omega_j \to S_i\otimes\Hom(\Omega_j,S_i)^*.
\end{equation}
Applying $\Hom(-,S_k)$ to this triangle, for any $k$, yields again $\Hom(\Omega^i_j,S_k[-1])=0$
and thus that $\Omega^i_j\in\hua{P}$ as before.
Hence applying $\Hom(P_k,-)$ to \eqref{eq:omega-ij} yields the exact sequence
\begin{equation}\label{eq:P-omega-ij}
  0\to \Hom(P_k,\Omega^i_j)\to \Hom(P_k,\Omega_j)\to\Hom(P_k,S_i)\otimes\Hom(\Omega_j,S_i)^*\to 0.
\end{equation}
Combining this, when $k=i$, with \eqref{eq:hom=ext} gives
\begin{equation}\label{eq:RHS=irr}
  \Ext^1(S_j,S_i)^* \cong \Hom(P_i,\Omega_j) / \Hom(P_i,\Omega^i_j)
\end{equation}
To see that the RHS is $\Irr(P_i,P_j)$, note that it would be had
$\Omega_j$ and $\Omega^i_j$ been constructed in the analogous way
inside $\mod\End(\mathbf{P})$, where $\mathbf{P}=\bigoplus_{i=1}^n P_i$.
However, the duality \eqref{eq:dualPS} means that $\Hom(\mathbf{P},S_j)$
are the simple $\End(\mathbf{P})$-modules,
while $\Hom(\mathbf{P},P_j)$ are the corresponding projectives.
Then the short exact sequences \eqref{eq:P-omega} and \eqref{eq:P-omega-ij} mean that
$\Hom(\mathbf{P},\Omega_j)$ and $\Hom(\mathbf{P},\Omega^i_j)$
are precisely the analogous $\End(\mathbf{P})$-modules
and so the result follows by the Yoneda Lemma.
\end{proof}

We now prove the first important result about the hearts in the exchange graph
$\EGp(Q)$,  observing in the process that, when a heart tilts, its projectives mutate
in a way precisely analogous to cluster tilting sets, as in
\eqref{eq:mutate1} and \eqref{eq:mutate2}.
A broader, more general result has been proved independently by
Koenig-Yang~\cite{KY}.

\begin{theorem}\label{ppthm:psp}
Let $Q$ be an acyclic quiver with $n$ vertices
and $\h$ be any heart in $\EGp(Q)$.
Then $\h$ is finite and rigid,
with exactly $n$ simples $\Sim\h=\{S_1,\ldots,S_n\}$
and exactly $n$ indecomposable projectives $\Proj\h=\{P_1,\ldots,P_n\}$,
which are dual in the sense of \eqref{eq:dualPS}.

If we tilt $\h$ with respect to any simple $S_i\in\Sim\h$, then
the simples change according to the formulae
\eqref{eq:psp-1} and \eqref{eq:psp-2} and
the projectives change according to the formulae
\begin{eqnarray}
\label{eq:pspp1}
    \Proj\tilt{\h}{\sharp}{S_i}&=&\Proj\h-\{P_i\}\cup\{P^\sharp_i\},\\
\label{eq:pspp2}
    \Proj\tilt{\h}{\flat}{S_i}&=&\Proj\h-\{P_i\}\cup\{P^\flat_i\},
\end{eqnarray}
where
\begin{eqnarray}
\label{eq:Psharp}
 P_i^\sharp &=& \Cone(P_i \to \bigoplus_{j\neq i} \Irr(P_i,P_j)^*\otimes P_j),
\\ \label{eq:Pflat}
 P_i^\flat &=&  \Cone(\bigoplus_{j\neq i} \Irr(P_j,P_i)\otimes P_j \to P_i)[-1],
\end{eqnarray}
with $\Irr(P_i,P_j)$ as in \eqref{eq:irr=ext}.
\end{theorem}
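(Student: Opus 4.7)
The plan is to prove all assertions simultaneously by induction on the length of a tilting path from the canonical heart $\nzero$ to $\h$; such a path exists by the definition of the principal component $\EGp(Q)$. The base case $\h=\nzero$ is classical: the simple modules $S_i$ and the indecomposable projective $\k Q$-modules $P_i$ satisfy the duality \eqref{eq:dualPS}, and acyclicity of $Q$ ensures that $\Ext^1(S_i,S_i)=0$, so $\nzero$ is rigid. For the inductive step I will treat a forward tilt $\tilt{\h}{\sharp}{S_i}$; the backward case is entirely analogous.

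By Proposition~\ref{pp:fini} the new heart is finite with simples $\{S_i[1]\}\cup\{T_l\}_{l\neq i}$, where $T_l=\tilt{\psi}{\sharp}{S_i}(S_l)$ fits into a triangle $T_l\to S_l\to S_i[1]\otimes\Ext^1(S_l,S_i)^*\to T_l[1]$. I define the candidate new projectives by keeping $P_j$ for $j\neq i$ and introducing $P_i^\sharp$ via the cone \eqref{eq:Psharp}, using the identification $\Irr(P_i,P_l)^*\cong\Ext^1(S_l,S_i)$ from Proposition~\ref{pp:irr=ext}. For $j\neq i$, applying $\Hom(P_j,-)$ to the triangle for $T_l$ together with the inductive duality $\Hom^k(P_j,S_l)=\delta_{k,0}\delta_{jl}\k$ and $\Hom^k(P_j,S_i)=0$ yields $\Hom^k(P_j,T_l)=\delta_{k,0}\delta_{jl}\k$ and $\Hom^k(P_j,S_i[1])=0$, so $P_j$ remains projective in $\tilt{\h}{\sharp}{S_i}$ and is dual to $T_j$.

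The key computation concerns $P_i^\sharp$. Applying $\Hom(-,S_i[1])$ to the defining triangle $P_i\to X\to P_i^\sharp\to P_i[1]$ immediately produces $\Hom(P_i^\sharp,S_i[1])=\k$, with higher Exts vanishing. Applying $\Hom(-,T_l)$ for $l\neq i$ gives a long exact sequence whose critical step is that the induced map
\[
  \Hom(X,T_l)\cong\Irr(P_i,P_l)^*\longrightarrow\Hom(P_i,T_l)\cong\Ext^1(S_l,S_i)^*
\]
coincides, through the identifications of Proposition~\ref{pp:irr=ext}, with the canonical Yoneda pairing and is therefore an isomorphism; hence $\Hom^k(P_i^\sharp,T_l)=0$ for all $k$ and all $l\neq i$. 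Thus $P_i^\sharp$ is projective of $\tilt{\h}{\sharp}{S_i}$, dual to $S_i[1]$, and non-zero, while indecomposability follows from the summand argument in Proposition~\ref{pp:irr=ext}: any projective summand killed by every simple must vanish, and the dualities force all other summands into that case.

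Finally, rigidity of $\tilt{\h}{\sharp}{S_i}$ is equivalent, again by Proposition~\ref{pp:irr=ext}, to the absence of loops in the Gabriel quiver of $\End(\bigoplus P_j')$; after shifting the heart into an interval $\EG_N(Q,\nzero)$ for $N$ sufficiently large, the identification \eqref{eq:DC} in Lemma~\ref{lem:inj} equates this algebra with the endomorphism algebra of a cluster tilting set in $\C{N-1}{Q}$, and the no-loops property follows from the Buan--Thomas theorem \cite{BT}. I expect the hardest step to be the identification of the connecting map with the Yoneda pairing in the $\Hom(-,T_l)$ calculation, since this is precisely where the duality of simples and projectives interacts with the mutation formula; the preservation of loop-freeness under mutation is the other delicate ingredient and depends essentially on the acyclicity of $Q$.
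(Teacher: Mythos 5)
Your overall strategy coincides with the paper's: induct along tilts from $\nzero$, use Proposition~\ref{pp:fini} for the simples, keep $P_j$ ($j\neq i$) and adjoin the cone \eqref{eq:Psharp}, and deduce rigidity from Lemma~\ref{lem:inj} together with the loop-free property of cluster-tilted endomorphism algebras. However, there is a genuine gap at exactly the point you flag as hardest. Your argument reduces, as in the paper, to showing that the map $\Hom(X,S_j^\sharp)\cong\Irr(P_i,P_j)\to\Hom(P_i,S_j^\sharp)$ induced by precomposition with $P_i\to X$ (equivalently, the map $\delta_*$ of \eqref{eq:delta-star}) is an isomorphism; you assert that it ``coincides with the canonical Yoneda pairing and is therefore an isomorphism'', but identifying it as a composition pairing does not make it bijective --- that bijectivity is precisely the content to be proven, and it is not a formal consequence of the duality \eqref{eq:dualPS}. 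The paper proves it by a genuine argument: factoring the universal map of \eqref{eq:Sj-sharp} through the syzygy $\Omega_j$ of \eqref{eq:omega}, applying the octahedral axiom to get \eqref{eq:octahedron}, and then using \eqref{eq:P-omega}, \eqref{eq:P-omega-ij} and \eqref{eq:RHS=irr} --- i.e.\ the identification of $\Irr(P_i,P_j)$ with a complement of $\Hom(P_i,\Omega_j^i)$ inside $\Hom(P_i,\Omega_j)\cong\Hom(P_i,P_j)$, together with $\Hom(P_i,\Omega_j^i[1])=0$ --- to force $\delta_*$ to be an isomorphism. Some argument of this kind (or an equivalent minimal-approximation argument) must be supplied; without it the vanishing $\Hom^{k}(P_i^\sharp,S_j^\sharp)=0$ for $k=0,1$, hence the duality and projectivity of $P_i^\sharp$, is unproven. (A small slip in the same place: $\Hom(X,S_j^\sharp)\cong\Irr(P_i,P_j)$, not its dual.)

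A second, smaller omission: the formulae \eqref{eq:pspp1}--\eqref{eq:pspp2} assert that the $n$ objects you construct are \emph{all} the indecomposable projectives of the new heart, and your proposal never rules out additional ones. The paper closes this by shifting the (finite) new heart into an interval $\EG_N(Q,\nzero)$ for $N$ large and invoking Lemma~\ref{lem:inj}, which bounds $\#\Proj\h$ by $\#Q_0$; you use this shifting device for the rigidity step but not for completeness of the set of projectives, and it should be added there as well.
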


\begin{proof}
\newcommand{\univ}{\upsilon} 
We use induction starting from the canonical heart $\nzero$,
which we know is finite and rigid, with standard simples and projectives
satisfying \eqref{eq:dualPS}. We will only give the proof for forward tilting,
but a very similar proof works for backward tilting and thus we can reach all
hearts in $\EGp(Q)$.

So, suppose that $\h$ is a finite and rigid heart satisfying \eqref{eq:dualPS},
\New{with associated $t$-structure $\hua{P}$,}
and that $S_i\in\Sim\h$. By Proposition~\ref{pp:fini}, we know that
$\tilt{\h}{\sharp}{S_i}$ is finite with new simples given by \eqref{eq:psp-1},
that is, they are $S_i[1]$ together with $S_j^\sharp=\tilt{\psi}{\sharp}{S_i}(S_j)$,
for $j\neq i$, occurring in the triangle
\begin{equation}\label{eq:Sj-sharp}
  S_j[-1] \xrightarrow{u} E_j^*\otimes S_i \to S_j^\sharp \to S_j,
\end{equation}
where $E_j=\Ext^1(S_j,S_i)$ and $u$ is the universal map.

We claim that the new projectives are given by \eqref{eq:pspp1}
and that they satisfy \eqref{eq:dualPS} with respect to the new simples.
First, to see that $P_k$, for $k\neq i$, remains projective, note that
$\Ext^1(P_k,M)=0$ for any $M\in\hua{P}\supset\tilt{\hua{P}}{\sharp}{S_i}$,
so we just need to show $P_k$ is in
$\tilt{\hua{P}}{\sharp}{S_i}=\hua{P}\cap {^\perp}S_i$,
and this follows from \eqref{eq:dualPS}.
Next, applying $\Hom(P_k,-)$ to \eqref{eq:Sj-sharp}
gives $\Hom(P_k,S_j^\sharp)\cong\Hom(P_k,S_j)$ and, as also $\Hom(P_k,S_i[1])=0$,
most of the new duality \eqref{eq:dualPS} holds
and it remains to consider the case of $P_i^\sharp$.

Applying $\Hom(-,S_i)$ to \eqref{eq:Psharp} gives
$\Hom^k(P_i^\sharp,S_i[1])\cong\Hom^k(P_i,S_i)$, for all $k$,
so it remains to show that $\Hom^k(P_i^\sharp,S_j^\sharp)=0$, for all $k$
and all $j\neq i$.
As well as completing the duality,
this will mean that $\Hom^k(P_i^\sharp,M)=0$, for all $k\neq 0$
and all $M\in \tilt{\h}{\sharp}{S_i}$, so that $P_i^\sharp$ is a new projective.

Since $S_j^\sharp\in\h$, the required vanishing follows immediately from applying
$\Hom(-,S_j^\sharp)$ to \eqref{eq:Psharp}, except in the cases $k=0,1$.
These two cases appear in the long exact sequence
\begin{equation}\label{eq:delta-star}
0\to \Hom(P_i^\sharp,S_j^\sharp) \to \Irr(P_i,P_j)
 \xrightarrow{\delta_*} \Hom(P_i,S_j^\sharp) \to \Hom(P_i^\sharp,S_j^\sharp[1])\to 0,
\end{equation}
where $\delta\in\Hom(P_j,S_j^\sharp)$ is any non-zero map in this 1-dimensional space.
Thus what we must show is that $\delta_*$ is an isomorphism.

For this, we recall from the proof of Proposition~\ref{pp:irr=ext} that
we associated to $S_j$ the syzygy
\New{$\Omega_j\in\hua{P}$,}
defined by the triangle \eqref{eq:omega},
with a map $h\colon S_j[-1]\to \Omega_j$ inducing
an isomorphism $h^*\colon \Hom(\Omega_j, S_i)\cong E_j$.
Hence we may factor the universal map $u$ in \eqref{eq:Sj-sharp}
through $h$ and the universal map
\[
  \univ:\Omega_j\to \Hom(\Omega_j, S_i)^*\otimes S_i \cong E_j^*\otimes S_i.
\]
Applying the Octahedral Axiom to this factorisation $u=\univ\circ h$
gives the commutative diagram of triangles in Figure~\ref{fig:octahedron},
where $\Omega_j^i$ is as in \eqref{eq:omega-ij} and
so, in particular, $\Omega_j^i\in\hua{P}$.

\begin{figure}\centering\[
  \xymatrix{
    & \Omega_j^i \ar[d]\ar@{=}[r] & \Omega_j^i \ar[d]\\
    S_j[-1] \ar@{=}[d] \ar[r]^{h} & \Omega_j \ar[r] \ar[d]^{\univ}
        & P_j \ar[d]^{\delta} \ar[r] & S_j \ar@{=}[d]\\
    S_j[-1] \ar[r]^{u} & E_j^*\otimes S_i \ar[d] \ar[r] &
        S_j^\sharp \ar[d]\ar[r] & S_j\\
    & \Omega_j^i[1] \ar@{=}[r]& \Omega_j^i[1]\\
  }
\]\caption{Octahedral diagram for $u=\univ\circ h$.}
\label{fig:octahedron}
\end{figure}

Notice that the right square ensures that $\delta$ is nonzero and so
provides the map required in \eqref{eq:delta-star}.
In fact, we can also observe that $\Omega_j^i$ is the new syzygy $\Omega_j^\sharp$,
and hence is actually in $\hua{P}^\sharp_{S_i}\subset \hua{P}$,
although this is more than we need to know.
But now we can apply $\Hom(P_i,-)$ to the right hand vertical triangle in
Figure~\ref{fig:octahedron}
and deduce, as required, that $\delta_*$ is an isomorphism,
because, as explained after  \eqref{eq:RHS=irr}, $\Irr(P_i,P_j)$ is a complement
to $\Hom(P_i,\Omega_j^i)$ in $\Hom(P_i,\Omega_j)\cong\Hom(P_i,P_j)$
and also $\Hom(P_i,\Omega_j^i[1])=0$.

Thus we have found $n$ new projectives of the new heart
$\tilt{\h}{\sharp}{S_i}$, which are dual to the new simples.
Hence these new projectives are non-isomorphic and indecomposable,
by the first part of Proposition~\ref{pp:irr=ext},
and so they must form $\Proj\tilt{\h}{\sharp}{S_i}$,
as Remark~\ref{rem:silting} implies there can be no more projectives.
This also shows that $\Proj\tilt{\h}{\sharp}{S_i}$ is a silting set.

To complete the inductive step, we must observe that the
new heart $\tilt{\h}{\sharp}{S_i}$ is rigid.
By Proposition~\ref{pp:irr=ext}, this equivalent to the fact that
$\Irr(P,P)=0$, for all $P\in \Proj \tilt{\h}{\sharp}{S_i}$,
i.e. that the Gabriel quiver of $\End_{\D(Q)}(\mathbf{P})$ has no loops, for
$\mathbf{P}=\bigoplus \Proj \tilt{\h}{\sharp}{S_i}$.
This holds by Remark~\ref{rem:silting}, as $\Proj \tilt{\h}{\sharp}{S_i}$
is a silting set.
\end{proof}

\subsection{Convexity and comparison with cluster exchange graphs}

Using Theorem~\ref{ppthm:psp}, we can begin to relate the exchange graphs for
$\D(Q)$ and $\C{m}{Q}$.
In particular, it tells us that any heart in $\EGp_N(Q,\nzero)$ has $\# Q_0$ projectives
and so we can use Lemma~\ref{lem:inj}, for any $m\geq N-1$, to define a map
\begin{equation}\label{eq:inj}
     \com_{N,m}:\EGp_N(Q,\nzero) \rightarrow \CEG{m}{Q},
\end{equation}
sending a heart $\h$ to the $m$-cluster tilting set $\cluster_m\bigl(\Proj\h\bigr)$,
with $\cluster_m$ as in \eqref{eq:cluster}.

\begin{proposition}\label{pro:psp}
The map $\com_{N,m}$ is injective on vertices, for all $m\geq N-1$,
and preserves edges, when $m\geq N$.
\end{proposition}

\begin{proof}
The map is injective on vertices,
because $\Proj\h$ spans $\h$, by Proposition~\ref{pp:irr=ext} and
hence determines $\h$, by Proposition~\ref{pp:proj}.

For the map to preserve edges, we need to show that,
within $\EGp_N(Q,\nzero)$, the cluster tilting set
$\cluster_m(\Proj\tilt{\h}{\sharp}{S_i})$ is the forward mutation
of $\cluster_m(\Proj\h)$ at $\cluster_m(P_i)$,
for the corresponding projective $P_i$.
When $m\geq N$, this follows from \eqref{eq:DC} in Lemma~\ref{lem:inj},
as the mutation formula \eqref{eq:mutate1} for $\cluster_m(\Proj\h)$
then agrees with the mutation formula \eqref{eq:Psharp} for $\Proj\h$.
\end{proof}

To show that $\com_{N,N-1}$ also preserves edges, we need
to look more carefully at the exchange graph $\EGp_N(Q,\nzero)$
and in particular study its `convexity'.

\begin{definition}\label{def:line}
Let $\h$ be a heart in a triangulated category $\D$.
For $S \in \Sim\h$, we set $\tilt{\h}{0\sharp }{S}=\h$ and inductively define
\[
    \tilt{\h}{m\sharp }{S}
    ={  \Big( \tilt{\h}{ (m-1) \sharp}{S} \Big)  }^{\sharp}_{S[m-1]} \, ,
\]
for $m\geq1$, and similarly $\tilt{\h}{m\flat }{S}$, for $m\geq1$.
For $m<0$, we also set $\tilt{\h}{m\sharp }{S}=\tilt{\h}{-m\flat }{S}$.

The \emph{line} $l=l(\h, S)$ in $\EG(\D)$ is then the full subgraph
consisting of the vertices $\{ \tilt{\h}{m\sharp}{S} \}_{m\in\ZZ}$.
We say an edge in $\EG(\D)$ has \emph{direction} $T$
if its label is $T[m]$ for some integer $m$;
we say a line $l$ has direction $T$
if some (and hence every) edge in $l$ has direction $T$.

A \emph{line segment} of length $m$ in $\EG(\D)$
is the full subgraph consisting of vertices $\{\tilt{\h}{i\flat}{S}\}_{i=0}^{m-1}$
of some line $l(\h,S)$ in $\EG(\D)$ and for some positive integer $m$.
Notice that any line segment inherits a direction from the corresponding line
and in particular line segments of length one consisting of the same vertex
may differ by their directions.
\end{definition}

\newcommand{\subg}{G}

\begin{definition}\label{def:convex}
A subgraph $\subg$ of $\EG(\D)$ is \emph{convex} if any line in $\EG(\D)$
that meets $\subg$ meets it in a single line segment.
Define the \emph{cyclic completion} of a convex subgraph $\subg$
to be the oriented graph $\cc{\subg}$ obtained from $\subg$ by adding an edge
$e_l=\left(\h\to\tilt{\h}{(m-1)\flat}{S}\right)$ with direction $S$
for each line segment $l\cap{\subg}=\{\tilt{\h}{i\flat}{S}\}_{i=0}^{m-1}$
of direction $S$, in $\subg$.
Call the line segment $l\cap{\subg}$ together with $e_l$ a \emph{basic cycle}
(induced by $l$ with direction $S$) in $\subg$.
\end{definition}

It is easy to see that any interval
$\EG_N(\D, \h_0)$, as in \eqref{eq:EGinterval}, is a convex subgraph,
as is its initial component
$\EGp_N(\D, \h_0)$.

\begin{proposition}\label{pp:parallel}
$\EGp_N(Q,\nzero)$ is a convex subgraph of $\EGp(Q)$,
in which every maximal line segment has length $N-1$.
Further, $\EGp_N(Q,\nzero)$ has a unique source $\nzero$ and a unique sink $\nzero[N-2]$.
\end{proposition}

\begin{proof}
Let $\h\in\EGp_N(Q,\nzero)$ and $S\in\Sim\h$.
Then $S$ is indecomposable in $\D(Q)$ and hence,
by \eqref{eq:Happ} and the first part of Lemma~\ref{lem:parallel},
in $\nzero[m]$ for some integer $0\leq m\leq N-2$.
By the second part of Lemma~\ref{lem:parallel}, we have
\[l(\h,S)\cap\EGp_N(Q,\nzero)=\{\tilt{\h}{i\sharp}{S}\}_{i=-m}^{N-2-m}\]
which implies the first statement.

If $\h\in\EGp_N(Q,\nzero)$,
with $S\in\Sim\h$,
and $\tilt{\h}{\flat}{S}\not\in\EGp_N(Q,\nzero)$,
then by Lemma~\ref{lem:parallel}, $\Ho{0}(S)\neq0$
and so, by \eqref{eq:Happ}, $S\in\nzero$.
Thus, if $\h$ is a source, then $\h\subset\nzero$ and so $\h=\nzero$,
i.e. $\nzero$ is the unique source.
Similarly for the uniqueness of the sink.
\end{proof}

As an immediate consequence of Proposition~\ref{pp:parallel},
any basic cycle in the cyclic completion $\cc{\EGp_N}(Q,\nzero)$ is an $(N-1)$-cycle.

\begin{theorem}\label{thm:comparison}
The map $\com=\com_{N,N-1}$, as in \eqref{eq:inj}, induces a
canonical isomorphism
\begin{equation}\label{eq:J}
    \cc{\com}: \cc{\EGp_N}(Q, \nzero)  \cong  \CEG{N-1}{Q}.
\end{equation}
between oriented graphs. Moreover, this isomorphism induces a
bijection between basic cycles in $\cc{\EGp_N}(Q, \nzero)$
and almost complete cluster tilting sets in $\C{N-1}{Q}$.
\end{theorem}

\begin{proof}
We write $\clu=\cluster_{N-1}\colon \D(Q)\to \C{N-1}{Q}$.
Any maximal line segment in $\EGp_N(Q, \nzero)$ is of the form
\[
    l(\h,S_i)\cap\EGp_N(Q, \nzero)=\{\h_j\}_{j=0}^{N-2},
    \;\text{where $\h_j=\tilt{\h}{j\flat}{S_i}$.}
\]
Let $A_l=\bigcap_{j=0}^{N-2} \Proj\h_j$
and $P_i^j=\Proj\h_j-A_l$.
By \eqref{eq:pspp2}, we have $\#A_l=n-1$,
which implies $\clu(A_l)$ is an almost complete cluster tilting set.
By \cite[Thm.~4.3]{ZZ},
any almost complete cluster tilting set has precisely $N-1$ completions,
and hence $\{\com(\h_k)\}_{k=0}^{N-2}$
are all the completions of $\clu(A_l)$.

We claim that
\begin{equation}\label{eq:edge x}
  \com(\h_{j-1})=\mu_i \com(\h_j),
\end{equation}
for $j=2,\ldots,N-2$, where $\mu_i$ is mutation at $\clu(P_i^{j})$.
Assuming this for the moment,
we deduce that \eqref{eq:edge x} also holds for $j=1$ and
\[
    \com(\h_{N-2})=\mu_i\com(\h_0),
\]
since $\{\com(\h_k)\}_{k=0}^{N-2}$ forms a $(N-1)$-cycle in $\CEG{N-1}{Q}$
(cf. \cite{IY}).
Therefore $\com$ preserves edges
and can be extended to the required map $\cc{\com}$
that sends each new edge
$e_l=\left(\h\to\tilt{\h}{(N-2)\flat}{S_i}\right)$ in any basic cycle to the
mutation $\mu_i$ on $\com(\h)$ at $\clu(P_i^0)$.

The fact that $\cc{\com}$ is a
graph epimorphism, i.e. surjective on vertices and edges,
follows by induction, starting at $\nzero$ and using the fact that
$\CEG{N-1}{Q}$ is connected and that $\cc{\com}$ is a local isomorphism,
because both (oriented) graphs are $(n,n)$-regular,
i.e. have $n$ incoming and $n$ outgoing edges at each vertex,
where $n=\# Q_0$.
We already know, from Proposition~\ref{pro:psp}, that $\cc{\com}$ is
injective on vertices and hence on edges, since there are  no multiple edges or loops,
so we have the required isomorphism.
Moreover, $l \longmapsto A_l$ gives the canonical bijection
between basic cycles and almost complete cluster tilting sets.

To see that \eqref{eq:edge x} does indeed hold, we first show that
\begin{equation}\label{eq:DCx}
    \Hom_{D(Q)}(P_i^j, P)
    =\Hom_{\C{N-1}{Q}}(\clu(P_i^j), \clu(P))
\end{equation}
for any $P\in A_l$.
By the first part of Lemma~\ref{lem:parallel} and \eqref{eq:Happ},
we know that 
$S_i\in\nzero[N-2]$.
Further, by \eqref{eq:dualPS}, we have
\begin{equation}\label{eq:P_i^j}
    \Hom^\bullet(P_i^j,S_i[-j])=\Hom(P_i^j,S_i[-j])\neq0.
\end{equation}
But $\Hom(M,S_i[-j])=0$ for $M\in\hua{P}_Q[N-1-j]$
as $S_i[-j]\in\nzero[N-2-j]$,
so we have $P_i^j\in\hua{P}_Q\cap \hua{P}_Q^\perp[N-1-j]$.
Since $j\geq2$,
we have $\Hom(\shift{N-1}^t P_i^j, P)=0$, for any $t\neq 0$ and $P\in A_l$,
by the same calculation as in the proof of Lemma~\ref{lem:inj},
which implies \eqref{eq:DCx}.
Then we deduce that
$V_P'=\Irr(\clu(P_i^j), \clu(P))$
is induced from $V_P\subset\Irr(P_i^j, P)$, for any $P\in A_l$.
Hence the triangle
\[
    \clu(P_i^j)
    \to \bigoplus_{P \in A_l} (V_P')^*\otimes \clu(P)
    \to \mu_i(\clu(P_i^j))
    \to \clu(P_i^j)[1]
\]
in $\C{N-1}{Q}$ is induced from some triangle
\begin{equation}\label{eq:X}
    P_i^j \to \bigoplus_{P \in A_l} V_P^*\otimes P \to X \to P_i^j[1].
\end{equation}
Note that $\mu_i(\clu(P_i^j))$ is one of the complements of $A_l$
and thus $X=P_i^k$ for some $k\in[0,N-2]$.
Applying $\Hom(-,S_i)$ to \eqref{eq:X} gives
\[
    \Hom(X,S_i[1-j])=\Hom(P_i^j,S_i[-j])\neq0,
\]
which implies that $X=P_i^{j-1}$, by \eqref{eq:P_i^j},
and hence \eqref{eq:edge x} holds, as required.
\end{proof}

An immediate corollary is that we can
improve on \eqref{eq:inclusion} in this case, i.e. we have
\begin{equation}\label{eq:prin}
    \EGp_N(Q,\nzero)=\EGp(Q)\cap\EG_N(Q, \nzero).
\end{equation}
To see this, note that any heart $\h$ in the RHS is finite with $\# Q_0$ projectives,
by Theorem~\ref{ppthm:psp}, and so $\cluster_{N-1}\bigl(\Proj\h\bigr)$
is in $\CEG{N-1}{Q}$, by Lemma~\ref{lem:inj}.
Then Theorem~\ref{thm:comparison} implies that $\h$ is in the LHS of \eqref{eq:prin}, as required.

\begin{remark}\label{rem:BRT}
We expect that the isomorphism \eqref{eq:J} of Theorem~\ref{thm:comparison}
is equivalent to the bijection of Buan-Reiten-Thomas \cite[Thm.~2.4(c)]{BRT}
between $m$-cluster tilting sets and their $m$-$\Hom_{\leq0}$-configurations,
since one can plausibly conjecture that these are precisely the sets
$\Sim\h$ for $\h\in\EGp_N(Q, \nzero)$ and indeed, more generally, that their $\Hom_{\leq0}$-configurations
are the sets $\Sim\h$ for $\h\in\EGp(Q)$.
\end{remark}

\section{Coloured quivers and Ext quivers}

Recall from \cite[\S 2]{BT}, that any cluster tilting set $\CTS=\{\cts_1,\ldots,\cts_n\}$
in $\CEG{m}{Q}$
determines a coloured quiver $\Q{\CTS}$
\New{with vertex set $\CTS$. For each $0\leq c\leq m-1$, there are $r^{(c)}_{ij}$ arrows $\cts_i\to \cts_j$
with colour $c$, where
\[ r^{(c)}_{ij}=\dim\Irr(\cts^{(c)}_i,\cts_j)\]
and $\cts^{(c)}_i$ is defined
recursively by $\cts^{(0)}_i=\cts_i$ and $\cts^{(c+1)}_i=\bigl(\cts^{(c)}_i\bigr)^\sharp$,
as in \eqref{eq:mutate1}.}
%
Furthermore, $\Q{\CTS}$ is monochromatic and skew-symmetric,
in the sense that the arrows from $\cts_i$ to $\cts_j$ have just one colour,
$c$ say, 
and then the arrows from $\cts_j$ to $\cts_i$ have colour $m-1-c$,
with the same multiplicity $V$.
We will denote this situation by
\[\xymatrix@C=2pc{
    V \cdot \bigl( \cts_i \ar@/^.5pc/[r]^c
        \ar@/_.5pc/@{<-}[r]_{m-1-c} & \cts_j \bigr).
}\]

\begin{definition}\label{def:augquiv}
Given such a coloured quiver $\Q{\CTS}$, we will
define the \emph{augmented graded quiver}, denoted by $\Qaug{\CTS}$,
with the same vertex set, containing the arrows of $\Q{\CTS}$,
with degree equal to their colour plus one
(so that the degrees of opposite arrows now sum to $m+1$),
together with an additional loop of degree $m+1$ at each vertex.
\end{definition}

\begin{definition}\label{def:extquiv}
Let $\h$ be a finite heart in a triangulated category $\D$.
The \emph{Ext quiver} $\Qext{\h}$ is the (positively) graded quiver
whose vertices are the simples of $\h$ and
whose degree $k$ arrows $S_i \to S_j$ correspond to a basis of
$\Hom_{\D}(S_i, S_j[k])$.
Further, define the \emph{\CY{N} double} of a graded quiver $\hua{Q}$,
denoted by $\double{\hua{Q}}{N}$,
to be the quiver obtained from $\hua{Q}$
by adding an arrow $S_j\to S_i$ of degree $N-k$ for each arrow $S_i\to S_j$ of degree $k$
and adding a loop of degree $N$ at each vertex.
\end{definition}

The objective of this section is to relate the coloured quiver of a cluster tilting set
$\CTS$, or more precisely its augmented graded quiver, to the double of the
Ext quiver of a heart in $\D(Q)$ that corresponds to $\CTS$.
In the process, we uncover another important property of such hearts.

\begin{definition}\label{def:mono}
We say that a heart $\h$ is
\begin{itemize}
\item \emph{monochromatic} (cf. \cite{BT})
  if, for any simples $S\neq T$ in $\Sim\h$,
  $\Hom^\bullet(S,T)$ is concentrated in a single (positive) degree;
\item \emph{strongly monochromatic}
  if it is monochromatic and in addition,
  for any simples $S\neq T$ in $\Sim\h$,
  $\Hom^\bullet(S,T)=0$ or $\Hom^\bullet(T,S)=0$;
\end{itemize}
\end{definition}

\begin{proposition}\label{pp:mono1}
Any heart in $\EGp(Q)$ is strongly monochromatic.
Moreover, for any heart $\h\in\EGp_N(Q, \nzero)$
and any $m\geq N$,
\begin{equation}\label{eq:QCY1}
    \Qaug{ \com_{N,m}(\h) }=\double{\Qext{\h}}{m+1}.
\end{equation}
where $\com_{N,m}=\cluster_m\circ\Proj$ is as in \eqref{eq:inj}.
\end{proposition}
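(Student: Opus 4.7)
My plan is to prove strong monochromaticity by induction on the tilting distance from $\nzero$, and the quiver identification by a direct calculation using the orbit-category structure of $\C{m}{Q}$ together with \eqref{eq:DC} and \eqref{eq:irr=ext}.

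For the first statement, note that any heart in $\EGp(Q)$ is finite by Theorem~\ref{ppthm:psp}, so after an appropriate overall shift (Corollary~\ref{cor:tilt-shift}) it lies in some $\EGp_N(Q, \nzero)$; since being strongly monochromatic is shift-invariant, it suffices to handle this case. The induction is then on simple-tilting distance from $\nzero$. The base case is immediate: between distinct simples of $\nzero$, $\Hom$ vanishes, $\Ext^1$ counts arrows in $Q$, so acyclicity prevents both directions of $\Ext^1$ from being simultaneously nonzero, while higher $\Ext$'s vanish because $\nzero$ is hereditary. For the inductive step, from a strongly monochromatic $\h$ with rigid simple $S$, I would apply the functors $\Hom(-, S_k^\sharp)$, $\Hom(S_j^\sharp, -)$, $\Hom(-, S[1])$ and $\Hom(S[1], -)$ to the defining triangle \eqref{eq:Sj-sharp} (and its dual for backward tilting) and use the inductive hypothesis on pairs of old simples to control the resulting long exact sequences, concluding that $\Hom^\bullet$ between any two new simples is again concentrated in a single degree with the either-or vanishing intact.

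For the quiver identification, the vertex sets agree by Theorem~\ref{ppthm:psp}, and the key calculation is a summand-by-summand analysis of $\Hom^c_{\C{m}{Q}}(T_i, T_j) = \bigoplus_{t \in \ZZ} \Hom_{\D(Q)}(P_i, \shift{m}^t P_j[c])$ for $0 \leq c \leq m$. The $t = 0$ piece is $\Ext^c(P_i, P_j)$, which vanishes for $c > 0$ via projectivity of $P_i$ applied to the canonical $\h$-filtration of $P_j$, and for $c = 0$ encodes the $\Irr$-data, giving the forward degree-$1$ arrows of $\CY{\Q{\h}}{m+1}$ via \eqref{eq:irr=ext}. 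The $t = 1$ piece $\Hom_{\D(Q)}(P_i, \tau^{-1} P_j[m-1+c])$ converts, via Auslander--Reiten duality, into Ext-data between simples of $\h$ in the reverse direction, accounting for the reverse CY arrows at degree $c+1$; the Serre-dual of $\End(P_i)$ then supplies the degree-$(m+1)$ loop at each vertex. Summands with $|t| \geq 2$ vanish by the position-of-projective arguments of Lemma~\ref{lem:inj}, crucially using $m \geq N$.

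The main obstacle is the inductive step for strong monochromaticity, and specifically preserving the either-or vanishing under tilting. This requires a case analysis according to which of $\Hom^\bullet(S, S_j)$ or $\Hom^\bullet(S_j, S)$ is nonzero for each old simple $S_j$, together with careful tracking of the degree shifts introduced by the triangles \eqref{eq:Sj-sharp} and use of the rigidity of $S$ to ensure that the long exact sequences degenerate as required.
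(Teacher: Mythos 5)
There are two genuine gaps, one in each half of your plan.

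First, the induction for strong monochromaticity does not close: the inductive hypothesis is too weak. Suppose $\h$ is strongly monochromatic with simples $S_i,S_j,S_k$ satisfying $\Ext^1(S_j,S_i)\neq0$ (so $\Hom^\bullet(S_j,S_i)$ sits in degree $1$ and $\Hom^\bullet(S_i,S_j)=0$), $\Hom^\bullet(S_j,S_k)$ concentrated in degree $d$, $\Hom^\bullet(S_i,S_k)$ concentrated in degree $d+1$, and $\Hom^\bullet(S_k,S_j)=\Hom^\bullet(S_k,S_i)=0$. Nothing in strong monochromaticity of $\h$ forbids this configuration. Tilting at $S_i$ leaves $S_k$ unchanged, and applying $\Hom(-,S_k)$ to \eqref{eq:Sj-sharp} gives $\Hom^{d}(S_j^\sharp,S_k)\cong\Hom^{d}(S_j,S_k)\neq0$ and $\Hom^{d+1}(S_j^\sharp,S_k)\cong\Ext^1(S_j,S_i)\otimes\Hom^{d+1}(S_i,S_k)\neq0$, so the tilted heart would fail monochromaticity. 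Ruling out such configurations is precisely the content of the proposition, so you must carry a stronger invariant through the induction; ``careful tracking of degree shifts'' from one tilt cannot supply it. This is why the paper proves \eqref{eq:QCY1} simultaneously with monochromaticity: it fixes the maximal line segment $l(\h,T_i)\cap\EGp_{m+1}(Q,\nzero)=\{\h_k\}$, and uses the Buan--Thomas mutation rule for coloured quivers (\eqref{eq:quiver 1}, \eqref{eq:quiver 2}) together with \eqref{eq:DC} and \eqref{eq:irr=ext} to identify the colour-zero arrows at each $\h_k$ with $\Ext^1$ between the corresponding simples; the monochromatic, skew-symmetric structure of the coloured quiver is the external input that pins down all the degrees, and strong monochromaticity drops out as a byproduct.

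Second, the orbit-category computation cannot produce the coloured quiver. The colour-$c$ arrows of $\Q{\mathbf{T}}$ for $c\geq1$ are \emph{not} graded Hom spaces between the summands $T_i=\cluster_m(P_i)$: they are defined via the complements and exchange triangles of the almost complete tilting set, and only the colour-zero arrows are given by $\Irr(T_i,T_j)$. Indeed, since $\mathbf{T}$ is an Ext-configuration, $\Ext^c_{\C{m}{Q}}(T_i,T_j)=0$ for $1\leq c\leq m-1$, so your decomposition $\bigoplus_t\Hom_{\D(Q)}(P_i,\shift{m}^tP_j[c])$ vanishes identically in exactly the range of degrees where the intermediate-colour arrows live; taken at face value your argument would show there are no such arrows, which is false (and it also misses the forward arrows of $\Q{\h}$ of degree $\geq2$, which correspond to colours $\geq1$, not to any $\Ext$ of the $T_i$ in $\C{m}{Q}$). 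The paper gets at these invisible colours by mutating along the whole segment $\h_0,\ldots,\h_{m-1}$ and reading off colour-zero data at each stage, which is the step your plan has no substitute for. (The degree-$(m+1)$ loops are added by definition on both sides, so no Serre duality argument is needed there.)
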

\begin{proof}
Note first that the loops on both sides match by construction
and so \eqref{eq:QCY1} just needs to be checked between two vertices.
\New{The key to the proof is to apply 
\eqref{eq:irr=ext} at a suitable place
on a maximal line segment in $\EGp_{m+1}(Q, \nzero)$ and to follow how
the degrees change under mutation/tilting.}

Choose any simples $T_i, T_j\in\h$.
By the first part of Lemma~\ref{lem:parallel} and \eqref{eq:Happ},
we have $T_i,T_j\in\bigcup_{t=0}^{N-2}\nzero[t]$
and hence
\begin{equation}\label{eq:hom-bound}
  \Hom^{\geq N}(T_i,T_j)=0,
\end{equation}
since $\nzero$ is hereditary.
Thus the maximum degree of any arrow in $\Qext{\h}$ is $N-1$.

Consider the maximal line segment
\begin{equation}\label{eq:maxseg}
    l(\h, T_i)\cap\EGp_{m+1}(Q, \nzero)=\{\h_k\}_{k=0}^{m-1},
\end{equation}
where $\h_k=\tilt{(\h_0)}{k\sharp}{S_i}$
and $\h_0$ has simples $S_i, S_j$, corresponding to $T_i,T_j$, with
associated projectives $P_i,P_j$.
Note that $\h_0\in\EGp_N(Q,\nzero)$
and that $\h=\h_h$ for some $0\leq h\leq N-2$, with $T_i=S_i[h]$ and $S_i\in\nzero$.
Therefore $\Hom^{\geq2}(S_i, S_j)=0$,
because $S_j\in\bigcup_{t=0}^{N-2}\nzero[t]$ and $\nzero$ is hereditary.
Thus
\begin{equation}\label{eq:1}
    \Hom^\bullet(S_i, S_j)=\Hom^1(S_i, S_j).
\end{equation}

Suppose that, in the coloured quiver $\Q{\cluster_m(\Proj\h_0)}$,
the full sub-quiver between $\cluster_m(P_i)$ and $\cluster_m(P_j)$ is
\begin{equation}\label{eq:Pij}
    \xymatrix@C=2pc{
    V \cdot \bigl(
        \cluster_m(P_j) \ar@/^.5pc/[r]^c \ar@/_.5pc/@{<-}[r]_{m-1-c} & \cluster_m(P_i)
    \bigr)
}\end{equation}
for some $0\leq c\leq m-1$ and multiplicity $V$.
Then, by the mutation rule in \cite{BT},
in the coloured quivers $\Q{\cluster_m(\Proj\h_k)}$, we have the following
full sub-quivers:
\begin{eqnarray}
    \xymatrix@C=3pc{
    V \cdot \bigl(
      \cluster_m(P_j) \ar@/^.5pc/[r]^{c-k}
      \ar@/_.5pc/@{<-}[r]_{m-1-c+k} & \cluster_m(P_i^k) \bigr)}
    &\quad& k=0,\ldots,c
\label{eq:quiver 1}\\
    \xymatrix@C=3pc{
    V \cdot \bigl(
      \cluster_m(P_j) \ar@/^.5pc/[r]^{m-k+c}
      \ar@/_.5pc/@{<-}[r]_{k-c-1} & \cluster_m(P_i^k) \bigr)}
    &\quad& k=c+1,\ldots,m-1
\label{eq:quiver 2}
\end{eqnarray}
where $P_i^k$ is the projective in $\h_k$ corresponding to $S_i[k]$
and hence $\cluster_m(P_i^k)$ is the replacement of
\New{$\cluster_m(P_i)$} in $\cluster_m(\Proj\h_k)$.
For $0\leq k \leq m-2$, the hearts $\h_k$ satisfy \eqref{eq:DC}
and so, by  \eqref{eq:irr=ext},
the number of colour-zero arrows in \eqref{eq:quiver 1} or \eqref{eq:quiver 2}
equals $\dim\Ext^1$ between the corresponding simples.
Arguing inductively, using Proposition~\ref{pp:fini}, these are
$S_j$ and $S_i[k]$ for \eqref{eq:quiver 1}
and $S_j^\sharp$ and $S_i[k]$ for \eqref{eq:quiver 2},
where $S_j^\sharp=\tilt{\psi}{\sharp}{S_i[c]}(S_j)$.

Consider the case when $c\neq m-1$.
We have
\begin{gather*}
    \Ext^1(S_j,S_i[k])=0=\Ext^1(S_i[k],S_j),\quad k=0,\ldots,c-1,\\
    \dim\Ext^1(S_j,S_i[c])=V, \quad \Ext^1(S_i[c],S_j)=0,\\
    \dim\Ext^1(S_i[c+1],S_j^\sharp)=V, \quad \Ext^1(S_j^\sharp,S_i[c+1])=0, \quad
    \text{if $c<m-2$}\\
    \Ext^1(S_j^\sharp,S_i[k])=0=\Ext^1(S_i[k],S_j^\sharp),\quad k=c+2,\ldots,m-2.
\end{gather*}
In particular $\Ext^1(S_i,S_j)=0$ and so, by \eqref{eq:1},
$\Hom^{\bullet}(S_i,S_j)=0$.
Also $S_i$ is exceptional, because it is rigid and $\D(Q)$ is hereditary.
Then, applying $\Hom(S_i,-)$ and $\Hom(-,S_i)$ to \eqref{eq:psi+},
a direct calculation shows that
\begin{gather*}
    \Hom^{\bullet}(S_i,S_j^\sharp)=\Hom^{-c}(S_i, S_j^\sharp)\cong
    \Ext^1(S_j, S_i[c])^*,
\\  \Hom^k(S_j,S_i)\cong\Hom^k(S_j^\sharp,S_i),\quad \forall k\neq c, c+1,
\\  \Hom^c(S_j^\sharp,S_i)=\Hom^{c+1}(S_j^\sharp,S_i)=0.
\end{gather*}
Since the degree of any arrow in the Ext quiver $\Qext{\h_0}$
is between $1$ and $N-1$ and $m\geq N$, we have
\[
    \Hom^{\bullet}(S_j,S_i)=\Hom^{c+1}(S_j, S_i),\quad
    \Hom^{\bullet}(S_j^\sharp,S_i)=0.
\]
Therefore, the full sub-quiver between $T_j, T_i$ in $\Qext{\h}$ is
\begin{eqnarray*}
    \xymatrix@C=3pc{
    V \cdot \bigl(
      S_j \ar[r]^{c-h+1} & S_i[h] \bigr)}
  && \text{if $0\leq h\leq c$,}
\\  \xymatrix@C=3pc{
    V \cdot \bigl(
      S_j^\sharp & S_i[h] \ar[l]_{h-c} \bigr)}
      && \text{if $c+1\leq h\leq N-2$,}
\end{eqnarray*}
as required.

On the other hand, in the case $c=m-1$, we have
\begin{gather*}
    \dim\Ext^1(S_i,S_j)=V, \quad \Ext^1(S_j,S_i)=0,\\
    \Ext^1(S_i[k],S_j)=0=\Ext^1(S_j,S_i[k]),\quad k=2,\ldots,m-2.
\end{gather*}
As before we deduce that
the full sub-quiver between $T_j, T_i$ in $\Qext{\h}$ is
\[
    \xymatrix@C=3pc{
    V \cdot \bigl(
      S_j & S_i[h] \ar[l]_{h+1} \bigr),}
\]
as required.

Thus we have proved that \eqref{eq:QCY1} holds
and, in the process, seen that $\h$ is strongly monochromatic.
By Lemma~\ref{lem:parallel}, we can shift any heart in $\EGp(Q)$ into
$\EGp_N(Q, \nzero)$ for some $N\gg1$, which implies
that any heart in $\EGp(Q)$ is strongly monochromatic.
\end{proof}

Now that we know that every heart in $\EGp(Q)$ is strongly monochromatic,
a more careful analysis will show that \eqref{eq:QCY1} also holds for $m=N-1$.
To this end, we write
\[
  \clu=\cluster_{N-1}\colon \D(Q)\to \C{N-1}{Q}
\]
for the quotient functor, as in Section~4, and recall that
\[
\com=\com_{N,N-1} \colon \EGp_N(Q,\nzero) \rightarrow \CEG{N-1}{Q},
\]
as defined in \eqref{eq:inj}.
The key observation is the following.

\begin{lemma}\label{lem:quiver}
Let $\h\in\EGp_N(Q,\nzero)$ with simples $S,S'$ and
corresponding projectives $P,P'$.
If $\tilt{\h}{\sharp}{S}\in\EGp_N(Q,\nzero)$, then
$\dim\Ext^1(S',S)=\dim\Irr(\clu(P), \clu(P'))$.
\end{lemma}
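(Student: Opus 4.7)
The plan is to invoke the correspondence between simple tilting in $\D(Q)$ and mutation in the cluster category $\C{N-1}{Q}$ established in Corollary~\ref{cor:J}, and to compare the two triangles for $\clu(P^\sharp)$ this produces in $\C{N-1}{Q}$. By Proposition~\ref{pp:irr=ext}, $\dim\Ext^1(S',S) = \dim\Irr(P,P')$, so it suffices to prove $\dim\Irr(P,P') = \dim\Irr(\clu(P), \clu(P'))$, comparing irreducible maps in $\Add\Proj\h \subset \D(Q)$ on the one side and in $\Add\clu(\Proj\h) \subset \C{N-1}{Q}$ on the other.

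The first step is to set up the two triangles. By Theorem~\ref{ppthm:psp}, $P^\sharp$ fits into the $\D(Q)$-triangle
\[
P \xrightarrow{\alpha} E \to P^\sharp \to P[1], \qquad E = \bigoplus_{Q \in \Proj\h \setminus \{P\}} \Irr(P,Q)^* \otimes Q,
\]
and applying the triangulated functor $\clu = \cluster_{N-1}$ yields a triangle $\clu(P) \xrightarrow{\clu(\alpha)} \clu(E) \to \clu(P^\sharp) \to \clu(P)[1]$ in $\C{N-1}{Q}$. Independently, the hypothesis together with Corollary~\ref{cor:J} identifies $\clu(P^\sharp)$ with the forward mutation of $\clu(P)$, which by \eqref{eq:mutate1} fits into the exchange triangle with middle term $\bigoplus_Q \Irr(\clu(P),\clu(Q))^* \otimes \clu(Q)$. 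Both triangles share outer terms $\clu(P)$ and $\clu(P^\sharp)$, so the remaining task is to identify the middle terms.

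The natural strategy is to show that $\clu(\alpha)$ is itself a minimal left $\Add\bigl(\clu(\Proj\h \setminus \{P\})\bigr)$-approximation of $\clu(P)$ in $\C{N-1}{Q}$, since by uniqueness of minimal approximations this forces the two middle terms to be isomorphic summand-by-summand, giving $\dim\Irr(P,Q) = \dim\Irr(\clu(P),\clu(Q))$ for every $Q$ and in particular for $Q = P'$. Non-redundancy of the summands of $\clu(E)$ is automatic, since $\alpha$ is built from irreducible maps in $\D(Q)$ and the restriction of $\Hom_{\C{N-1}{Q}}(\clu(P),\clu(Q))$ to its $\Hom_{\D(Q)}$-summand is injective. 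The main obstacle is the approximation property: any map $\clu(P) \to \clu(Q')$ in $\C{N-1}{Q}$ decomposes as $\bigoplus_t \Hom_{\D(Q)}(P, \shift{N-1}^t Q')$, and although the $t=0$ summand factors through $\alpha$ by inheritance from $\D(Q)$, the contributions for $t\neq 0$ must also be shown to factor through $\clu(\alpha)$. This is precisely where the hypothesis $\tilt{\h}{\sharp}{S}\in\EGp_N(Q,\nzero)$ enters: by Corollary~\ref{cor:ss} it is equivalent to $\Hom(T,S)=0$ for all $T\in\Sim\nzero[N-1]$, which together with Auslander-Reiten duality \eqref{eq:AR} should be enough to control the extra Hom-contributions in $\C{N-1}{Q}$ and rule out any that would produce genuinely new irreducible maps beyond those already present in $\D(Q)$. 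Once this minimality is secured, the comparison of the two triangles concludes the proof.
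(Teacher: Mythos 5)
Your overall strategy is the right one (reduce via Proposition~\ref{pp:irr=ext} to $\dim\Irr(P,P')=\dim\Irr(\clu(P),\clu(P'))$, use Corollary~\ref{cor:J} to identify $\clu(P^\sharp)$ with the mutation of $\clu(P)$, and compare the image of the triangle \eqref{eq:defPsh} with the exchange triangle in $\C{N-1}{Q}$), and you have correctly located the crux: the Hom-contributions from $\shift{N-1}^t$ with $t\neq 0$. But at exactly that point the proposal stops being a proof: you write that Corollary~\ref{cor:ss} together with Auslander--Reiten duality ``should be enough to control the extra Hom-contributions,'' without saying how. Corollary~\ref{cor:ss} merely rephrases the hypothesis as $\Ho{N-1}(S)=0$; it says nothing directly about maps $P\to\shift{N-1}^t R$, and no factorization argument is extracted from it. The mechanism that actually works is different: since $\nzero$ is hereditary, $\Hom(P,\shift{N-1}^t R)$ is nonzero for at most one $t$, and the calculation of Lemma~\ref{lem:inj} forces $t\geq 0$; then, for $t>0$, one applies $\Hom(-,\shift{N-1}^t R)$ to \eqref{eq:defPsh} and uses the vanishing $\Hom(P^\sharp,\shift{N-1}^t R[1])=0$ --- which is where the hypothesis really enters, because $\tilt{\h}{\sharp}{S}\in\EGp_N(Q,\nzero)$ places its projective $P^\sharp$ in $\hua{P}_Q[1]\cap\tau^{-1}\hua{P}_Q^\perp[N-1]$ as in \eqref{eq:where-proj} --- to conclude that every such map factors through $\alpha$ and hence contributes no irreducible maps. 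This yields $\Irr(\clu(P),\clu(R))=\clu(V(P,R))$ for some subspace $V(P,R)\subseteq\Irr(P,R)$, which is the statement your plan needs but does not establish.

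A second, smaller gap: even granting the approximation property, you appeal to uniqueness of minimal left $\Add$-approximations in $\C{N-1}{Q}$ and assert that minimality of $\clu(\alpha)$ is ``automatic''; injectivity of $\clu$ on the $t=0$ summand shows the components are nonzero but does not by itself rule out redundancy once the larger Hom-spaces of the cluster category are in play. The paper's proof avoids this machinery altogether: having shown $\Irr(\clu(P),\clu(R))$ is induced from a subspace $V(P,R)\subseteq\Irr(P,R)$, it observes that the exchange triangle for $\clu(P^\sharp)$ is then induced from a triangle in $\D(Q)$ with middle term $\bigoplus_X V(P,X)^*\otimes X$, and compares this with \eqref{eq:defPsh} in the Grothendieck group, using that $\Proj\h$ is a basis (Theorem~\ref{ppthm:psp}), to force $V(P,X)=\Irr(P,X)$. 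You should either carry out the factorization argument above and replace the minimal-approximation step by this Grothendieck-group comparison, or supply complete proofs of both the approximation property and the minimality claim; as written, the decisive step is asserted rather than proved.
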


\newcommand{\appr}{\gamma}  

\begin{proof}
By \eqref{eq:irr=ext} in Proposition~\ref{pp:irr=ext},
this is equivalent to showing that
\begin{equation}\label{eq:dimirr}
    \dim\Irr(P,P')=\dim\Irr(\clu(P), \clu(P')).
\end{equation}
Consider any $R\in\Proj\h$.
Writing $\hua{C}=\C{N-1}{Q}$ and $\D=\D(Q)$, recall that
\[
  \Hom_{\hua{C}}(\clu(P), \clu(R)) = \bigoplus_{t\in\ZZ} \Hom_{\D}(P,\shift{}^t R),
\]
where $\shift{}=\shift{N-1}$ is the cluster shift (Definition~\ref{def:cluster}).
If $t<0$, then, as in calculation~\eqref{eq:leminjcalc1} for Lemma~\ref{lem:inj},
we have $\shift{}^{-t} P\in \tau^{-1} \hua{P}_Q [N-2]$ and
$R\in \tau^{-1} \hua{P}^\perp_Q [N-2]$, and so $\Hom_{\D}(P,\shift{}^t R)=0$.
Thus the sum is only over $t\geq0$.

We claim that all maps in $\Irr(\clu(P),\clu(R))$ are induced from
$\Hom_{\D}(P, R)$.
Note that, when $R=P$, the no-loop condition in \cite[Sec.~2]{BT}
implies that $\Irr(\clu(P),\clu(P))=0$ and so the claim is trivial.
Assume then that $R\neq P$.

For $t>0$, let $P^\sharp$ be the new projective of $\tilt{\h}{\sharp}{S}$
that replaces $P$, that is, $P^\sharp$ is defined by the triangle
\begin{equation}\label{eq:defPsh}
    P\xrightarrow{\appr}
    \bigoplus_{X\in\Proj\h} \Irr(P,X)^*\otimes X \to P^\sharp \to P[1].
\end{equation}
Applying $\Hom_{\D}(-,\shift{}^{t} R)$ to this triangle 
gives the exact sequence
\begin{gather}\label{eq:last}
   \bigoplus_{X\in\Proj\h}  \Irr(P,X)\otimes \Hom_{\D}(X, \shift{}^{t} R)
    \xrightarrow{\appr^*} \Hom_{\D}(P, \shift{}^{t} R)\to \Hom_{\D}(P^\sharp,\shift{}^{t} R[1]).
\end{gather}
Since $\tilt{\h}{\sharp}{S}\in\EGp_N(Q,\nzero)$,
the t-structure $\tilt{\hua{P}}{\sharp}{S}$
corresponding to $\tilt{\h}{\sharp}{S}$
satisfies
$\tilt{\hua{P}}{\sharp}{S}\subset\hua{P}_Q$ and
$(\tilt{\hua{P}}{\sharp}{S})^\perp\subset\hua{P}^\perp_Q[N-2]$.
Then,
\[
    P^\sharp\in  \tau^{-1}(\tilt{\hua{P}}{\sharp}{S})^\perp\subset\tau^{-1}\hua{P}_Q^\perp[N-2],
\]
and, since $R\neq P$, we have
$R\in \Proj\tilt{\h}{\sharp}{S}\subset \tilt{\hua{P}}{\sharp}{S}$ and so
\[
    \shift{}^{t} R[1]=\tau^{-1} \shift{}^{t-1} R[N-1]\in\tau^{-1}
    \shift{}^{t-1} \tilt{\hua{P}}{\sharp}{S}[N-1]
    \subset \tau^{-1} \hua{P}_Q[N-1],
\]
which implies the last term in \eqref{eq:last} is zero, because $\h_Q$ is hereditary.
Hence no map in $\Irr(\clu(P),\clu(R))$ is induced from
$\Hom_{\D}(P,\shift{}^t R)$ for $t>0$.
Thus the claim holds.

Now, there may still be more maps in $\hua{C}$ than in $\D$,
which implies that
\begin{equation}\label{eq:VR}
  \Irr(\clu(P), \clu(R))=\clu(V_R),
\end{equation}
for some subspace $V_R$ of $\Irr(P,R)$.
Since $\tilt{\h}{\sharp}{S}\in\EGp_N(Q,\nzero)$,
Theorem~\ref{thm:comparison} implies that
$\com(\tilt{\h}{\sharp}{S})$ is the forward mutation at $\clu(P)$ of $\com(\h)$,
so that $\clu(P^\sharp)$ replaces $\clu(P)$.
Then the triangle defining $\clu(P^\sharp)$ in $\C{N-1}{Q}$ is
the image under $\clu$ of the triangle in $\D(Q)$
\[
  P\to \bigoplus_{X\in\Proj\h} V_X^*\otimes X \to P^\sharp \to P[1].
\]
By comparing this to the triangle \eqref{eq:defPsh},
we deduce that $V_X=\Irr(P,X)$,
because $\Proj\h$ is a basis for the Grothendieck group of $\D(Q)$
(by Theorem~\ref{ppthm:psp}).
In particular, setting $X=P'$ gives \eqref{eq:dimirr},
which completes the proof.
\end{proof}

\begin{remark}\label{rem:quiver}
In the setting of Lemma~\ref{lem:quiver},
if further $\Ext^1(S',S)$ and $\Irr(\clu(P), \clu(P'))$ are non-zero,
then, because the Ext quiver $\Qext{\h}$ is strongly monochromatic and
the augmented graded quiver $\Qaug{\com(\h)}$ is monochromatic and skew-symmetric,
the full sub-quiver between $S$ and $S'$
in the \CY{N} double $\double{\Qext{\h}}{N}$
is equal to the sub-quiver between $\clu(P)$ and $\clu(P')$ in $\Qaug{ \com(\h) }$.
\end{remark}

\begin{theorem}\label{thm:mono1}
For any heart $\h\in\EGp_N(Q, \nzero)$, we have
\begin{equation}\label{eq:QCY1+}
    \Qaug{ \com(\h) }=\double{\Qext{\h}}{N}.
\end{equation}
\end{theorem}

\begin{proof}
As before, i.e. in the proof of Proposition~\ref{pp:mono1},
the loops match and we only need to check that \eqref{eq:QCY1+} holds
between two simples $T_i,T_j\in\h$.
Also as before, we choose a maximal line segment \eqref{eq:maxseg},
with $m=N-1$, so that $\h=\h_h$, for some $0\leq h\leq N-2$,
and $\h_0$ has simples $S_i,S_j$ and projectives $P_i,P_j$.

Thus it is sufficient to show that the full sub-quiver between
$\clu(P_j)$ and $\clu(P_i^k)$ in $\Qaug{ \com(\h_k) }$
is equal to
the full sub-quiver between the corresponding simples in $\double{\Qext{\h_k}}{N}$,
for $0\leq k\leq N-2$.
Using the mutation rule for coloured quivers
(cf. \eqref{eq:quiver 1} and \eqref{eq:quiver 2})
and the change of simples formulae in Proposition~\ref{pp:fini},
once we know that the equality holds for one $\h_k$,
a direct calculation will show that it holds for all $\h_k$.

If there are no arrows in both the sub-quivers of
$\Qaug{ \com(\h_0) }$ and $\double{\Qext{\h_0}}{N}$, the equality holds for $\h_0$.
Suppose that there are arrows between $S_i$ and $S_j$ in $\double{\Qext{\h_0}}{N}$.
Since $\h_0$ is strongly monochromatic, there are three cases.
\StartEnum
\item $\Hom^\bullet(S_i, S_j)=\Hom^c(S_i, S_j)\neq0$ and $c=1$, by \eqref{eq:1}.
    Then applying Remark~\ref{rem:quiver} to $\h_0$ with simples $S=S_j$ and $S'=S_i$
    gives the equality for $\h_0$.
\item $\Hom^\bullet(S_j, S_i)=\Hom^{c+1}(S_j, S_i)\neq0$ for some $0\leq c\leq N-3$.
    Then applying Remark~\ref{rem:quiver} to $\h_c$ with simples $S=S_i[c]$ and $S'=S_j$
    gives the equality for $\h_c$.
\item $\Hom^\bullet(S_j, S_i)=\Hom^{N-1}(S_j, S_i)\neq0$
    and $S_j\in\nzero[N-2]$, because $S_i\in\nzero$.
    By Proposition~\ref{pp:fini},
    we deduce that $S_j,S_i[N-2]\in\Sim\h_{N-2}$.
    Since $S_j\not\in\nzero$, we have $\tilt{(\h_{N-2})}{\flat}{S_j}\in\EGp_N(Q,\nzero)$
    by Lemma~\ref{lem:parallel}.
    Then applying Remark~\ref{rem:quiver} to
    $\tilt{(\h_{N-2})}{\flat}{S_j}$ with simples $S=S_j[-1]$ and $S'=\tilt{\psi}{\flat}{S_j}(S_i[N-2])$
    gives the equality for $\tilt{(\h_{N-2})}{\flat}{S_j}$
    and hence for $\h_{N-2}$.
\StopEnum
On the other hand, suppose that there are arrows between
$\clu(P_j)$ and $\clu(P_i)$ in $\Qaug{ \com(\h_0) }$.
Similar to above, there are three cases and we obtain the required equality for an appropriate $\h_k$,
by applying Remark~\ref{rem:quiver}.
\end{proof}

Theorem~\ref{thm:mono1} will be a key step in the interpretation,
as provided by Theorem~\ref{thm:quiver}, of Buan-Thomas' coloured quiver,
or rather its augmented graded quiver,
in terms of hearts in the category $\D(\qq{N})$.

\section{Calabi-Yau categories} 

We now bring in the Calabi-Yau category $\D(\qq{N})$,
whose relationship with the derived category $\D(Q)$ and the cluster
category $\C{N-1}{Q}$ is the main focus of the paper.

\subsection{Ginzburg algebras}

Let $N\geq 2$ be an integer and $Q$ an acyclic quiver.
The \emph{\CY{N} Ginzburg (dg) algebra} $\qq{N}$  associated to $Q$
is constructed as follows (\cite[Sec.~7.2]{K10}):
\begin{itemize}
\item   Let $Q^N$ be the graded quiver whose vertex set is $Q_0$
and whose arrows are: the arrows in $Q$ with degree $0$;
an arrow $a^*:j\to i$ with degree $2-N$ for each arrow $a:i\to j$ in $Q$;
a loop $e^*:i\to i$ with degree $1-N$ for each vertex $e$ in $Q$.
\item   The underlying graded algebra of $\qq{N}$ is the completion of
the graded path algebra $\k Q^N$ in the category of graded vector spaces
with respect to the ideal generated by the arrows of $Q^N$.
\item   The differential of $\qq{N}$ is the unique continuous linear endomorphism homogeneous
of degree $1$ which satisfies the Leibniz rule and
takes the following values on the arrows of $Q^N$
\[
    \diff \sum_{e\in Q_0} e^*=\sum_{a\in Q_1} \, [a,a^*] .
\]
\end{itemize}
Write $\D(\qq{N})$ for $\hua{D}_{fd}(\mod  \qq{N})$,
the \emph{finite dimensional derived category} of $\qq{N}$
(cf. \cite[Sec.~7.3]{K10}).

Recall that a triangulated category $\hua{D}$ is called \emph{\CY{N}}
if, for any objects $L,M$ in $\hua{D}$ we have a natural isomorphism
\begin{equation}\label{eq:serre}
    \mathfrak{S}:\Hom_{\hua{D}}^{\bullet}(L,M)
        \xrightarrow{\sim}\Hom_{\hua{D}}^{\bullet}(M,L)^\vee[N].
\end{equation}
An object $S$ in a \CY{N} category is \emph{spherical} if
$\Hom^{\bullet}(S, S)=\k \oplus \k[-N]$.
Note that, as $N\geq 2$, any spherical object is rigid.

By \cite{K4} (see also \cite{KS},\cite{ST}), we know that
$\D(\qq{N})$ is a \CY{N} category,
which admits a canonical heart $\zero$ generated
by simple $\qq{N}$-modules $S_e$, for $e\in Q_0$,
each of which is spherical, because the quiver $Q$ has no loops.
We denote by $\EGp(\qq{N})$ the principal component of the exchange graph
$\EG(\qq{N})=\EG(\D(\qq{N}))$, that is, the component containing $\zero$.

\subsection{Twist functors and braid groups}

Recall (\cite{KS},\cite{ST}) that one may
associate to any spherical object $S$ in a \CY{N} category $\D$
a certain auto-equivalence, namely,
the \emph{twist functor} $\phi_S\colon \D\to\D$,
defined by
\begin{equation}\label{eq:sphtwist+}
    \phi_S(X)=\Cone\left(S\otimes\Hom^\bullet(S,X)\to X\right)
\end{equation}
with inverse
\begin{equation}\label{eq:sphtwist-}
    \phi_S^{-1}(X)=\Cone\left(X\to S\otimes\Hom^\bullet(X,S)^\vee \right)[-1].
\end{equation}
Note that the graded dual of a graded $\k$-vector space
$V=\oplus_{i\in\ZZ} V_i[i]$
is
\[V^\vee=\bigoplus_{i\in\ZZ} V_i^*[-i].\]
where $V_i$ is an ungraded $\k$-vector space and $V_i^*$ is its usual dual.

The \emph{Seidel-Thomas braid group} $\Br(\qq{N})$
is the subgroup of $\Aut\D(\qq{N})$ generated by
the twist functors of the simples in $\Sim\zero$.
As shown in \cite[Sec.~2c]{ST},
if $S_1, S_2$ are spherical, then so is $S=\phi_{S_2}(S_1)$
and we have
\begin{equation}\label{eq:ST}
    \phi_S=\phi_{S_2}\circ\phi_{S_1}\circ\phi_{S_2}^{-1}.
\end{equation}
Furthermore,
the generators $\{ \phi_S \mid S\in\Sim\zero\}$
satisfy the braid relations and so
$\Br(\qq{N})$ is a quotient of
the usual braid group $\Br_Q$ associated to the quiver $Q$.
It is not known, except when $Q$ has type $A_n$,
whether this quotient is an isomorphism.

\begin{remark}\label{rem:mono}
We will call a heart \emph{spherical}, if all its simples are spherical.
Observe that, if a finite heart $\h$ is
also monochromatic (Definition~\ref{def:mono}) and spherical,
then the change of simples formulae \eqref{eq:psi+} and \eqref{eq:psi-}
in Proposition~\ref{pp:fini}
can be expressed in terms of the spherical twists, as follows.
\begin{gather}
\tilt{\psi}{\sharp}{S}(X)=
\begin{cases}
\phi_S^{-1}(X) &  \text{if $\Ext^1(X,S)\neq 0$}\\
X & \text{if $\Ext^1(X,S)= 0$}
\end{cases}
\label{eq:psp1}\\
\tilt{\psi}{\flat}{S}(X)=
\begin{cases}
\phi_S(X) &  \text{if $\Ext^1(S,X)\neq 0$}\\
X & \text{if $\Ext^1(S,X)= 0$}
\end{cases}
\label{eq:psp2}
\end{gather}
\end{remark}

\subsection{Lagrangian immersions}

We now introduce the main tool for relating hearts in $\D(Q)$ and $\D(\qq{N})$.

\begin{definition}\label{def:L-imm}
An exact functor $\imm:\D(Q) \to \D(\qq{N})$
is called a \emph{Lagrangian immersion} (L-immersion) if
for any pair of objects $\widehat{S}$, $\widehat{X}$ in $\D(Q)$
there is a short exact sequence
\begin{equation}\label{eq:lagrangian}
    0 \to  \Hom^{\bullet}(\widehat{X},\widehat{S})
    \xrightarrow{\imm}
    \Hom^{\bullet}(\imm(\widehat{X}),\imm(\widehat{S}))
    \xrightarrow{\imm^\dagger}
    \Hom^{\bullet}(\widehat{S},\widehat{X})^\vee[N] \to 0,
\end{equation}
where $\imm^\dagger=\imm^\vee[N]\circ\mathfrak{S}$
is the following composition
\[
    \Hom^{\bullet}(\imm(\widehat{X}),\imm(\widehat{S}))
    \xrightarrow{\mathfrak{S}}
    \Hom^{\bullet}(\imm(\widehat{S}),\imm(\widehat{X}))^\vee[N]
    \xrightarrow{\imm^\vee[N]}
    \Hom^{\bullet}(\widehat{S},\widehat{X})^\vee[N],
\]
in which $\mathfrak{S}$ is the Serre duality isomorphism of \eqref{eq:serre}.
\end{definition}

\begin{definition} \label{def:induced}
Let $\h$ be a finite heart in $\D(\qq{N})$ with $\Sim\h=\{ S_1,\ldots,S_n \}$.
If there is a L-immersion $\imm:\D(Q) \to \D(\qq{N})$
and a finite heart $\nh\in\EGp(Q)$ with
$\Sim\nh=\{ \widehat{S}_1,\ldots,\widehat{S}_n \}$,
such that $\imm(\widehat{S}_i)=S_i$,
then we say that $\h$ is \emph{induced} via $\imm$ from $\nh$
and write $\imm_*(\nh)=\h$.
\end{definition}

In particular, an L-immersion ensures that the Ext quivers of the two hearts
are related precisely by the doubling of Definition~\ref{def:extquiv}.

\begin{proposition} \label{pp:mono2}
Let $\h=\imm_*(\nh)$ be a heart in $\EG(\qq{N})$ induced by a heart
$\nh$ in $\EGp(Q)$, via an L-immersion $\imm\colon \D(Q) \to \D(\qq{N})$.
Then $\h$ is monochromatic and
\begin{equation}\label{eq:QCY2}
    \Qext{\h}=\double{\Qext{\nh}}{N}.
\end{equation}
\end{proposition}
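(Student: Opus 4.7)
The plan is to apply the defining short exact sequence \eqref{eq:lagrangian} of the L-immersion pairwise to the simples of $\nh$. Since $\nh\in\EGp(Q)$, Proposition~\ref{pp:mono1} guarantees that $\nh$ is strongly monochromatic; moreover, as $\nh$ is rigid by Theorem~\ref{ppthm:psp} and $\D(Q)$ is hereditary (and $\k$ is algebraically closed), each diagonal Hom complex satisfies $\Hom^\bullet(\widehat{S}_i,\widehat{S}_i)=\k$ concentrated in degree $0$. These two inputs, combined with \eqref{eq:lagrangian}, will determine $\Hom^\bullet(S_i,S_j)$ completely in terms of $\Q{\nh}$, and the main task is to check that the resulting Ext-quiver matches the CY-$N$ double.

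First I would dispose of the off-diagonal case. For $i\neq j$, strong monochromaticity of $\nh$ means that at most one of $\Hom^\bullet(\widehat{S}_i,\widehat{S}_j)$ and $\Hom^\bullet(\widehat{S}_j,\widehat{S}_i)$ is nonzero, and if so it is concentrated in a single positive degree $d$. Feeding this into
\[
0\to \Hom^\bullet(\widehat{S}_i,\widehat{S}_j)\to \Hom^\bullet(S_i,S_j)\to \Hom^\bullet(\widehat{S}_j,\widehat{S}_i)^\vee[N]\to 0
\]
shows that $\Hom^\bullet(S_i,S_j)$ is itself concentrated in a single degree (either $d$ or $N-d$), which is exactly monochromaticity of $\h$ between $S_i$ and $S_j$. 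To read off the Ext-quiver, observe that each arrow $\widehat{S}_i\to\widehat{S}_j$ of degree $d$ in $\Q{\nh}$ yields, via the injective part of \eqref{eq:lagrangian}, an arrow $S_i\to S_j$ of degree $d$ in $\Q{\h}$; applying \eqref{eq:lagrangian} instead to the pair $(\widehat{S}_j,\widehat{S}_i)$ produces, via the surjective part, an arrow $S_j\to S_i$ of degree $N-d$. This matches precisely the CY-$N$ doubling rule between the corresponding vertices.

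For the diagonal $i=j$, specializing \eqref{eq:lagrangian} yields
\[
0\to\k\to\Hom^\bullet(S_i,S_i)\to\k^\vee[N]\to 0,
\]
so that $\Hom^\bullet(S_i,S_i)$ consists of one copy of $\k$ in degree $0$ (the vertex itself) together with one copy of $\k$ in degree $N$ (a loop of degree $N$), which is exactly the contribution prescribed by $\CY{\Q{\nh}}{N}$. Assembling the pairwise and diagonal matches then gives $\Q{\h}=\CY{\Q{\nh}}{N}$. The only delicate point is keeping track of the grading shifts in \eqref{eq:lagrangian}: one has to verify, using the graded-dual convention $V^\vee=\bigoplus V_i^*[-i]$, that the Serre-dual end term contributes exactly the reverse arrows demanded by the double in complementary degree $N-d$, rather than some other shift. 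Once this is unwound, no further argument is required.
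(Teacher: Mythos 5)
Your proposal is correct and follows essentially the same route as the paper: the paper's own proof simply observes that monochromaticity of $\h$ follows from the short exact sequence \eqref{eq:lagrangian} together with strong monochromaticity of $\nh$ (Proposition~\ref{pp:mono1}), and that \eqref{eq:QCY2} follows directly from \eqref{eq:lagrangian}; your write-up just fills in the same computation, including the diagonal case $\Hom^\bullet(S_i,S_i)=\k\oplus\k[-N]$ and the degree bookkeeping giving the reverse arrows in degree $N-d$.
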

\begin{proof}
The fact that $\h$ is monochromatic follows from  \eqref{eq:lagrangian}
and the fact that $\nh$ is strongly monochromatic (Proposition~\ref{pp:mono1}).
Then \eqref{eq:QCY2} also follows directly from \eqref{eq:lagrangian}.
\end{proof}

Under appropriate conditions, tilting preserves the property of being induced.

\begin{proposition}\label{pp:comm}
Let $\h=\imm_*(\nh)$ be a heart in $\EG(\qq{N})$ induced by a heart
$\nh$ in $\EG(Q)$.
If $\nh$ is rigid, then $\h$ is spherical.
Moreover, suppose $S=\imm(\widehat{S})$ is a simple in $\h$,
induced by $\widehat{S}\in\Sim\nh$. Then
\StartEnum
\item
    if $\Hom^{N-1}(\widehat{S},\widehat{X})=0$ for any $\widehat{X}\in\Sim\nh$,
    then $\tilt{\h}{\sharp}{S}= \imm_*( \tilt{\nh}{\sharp}{\widehat{S}} )$.
\item
    if $\Hom^{N-1}(\widehat{X},\widehat{S})=0$ for any $\widehat{X}\in\Sim\nh$,
    then $\tilt{\h}{\flat}{S}= \imm_*( \tilt{\nh}{\flat}{\widehat{S}} )$.
\StopEnum
\end{proposition}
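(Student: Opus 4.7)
The plan is to derive both statements by extracting information from the Lagrangian immersion short exact sequence \eqref{eq:lagrangian} degree by degree.

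For the sphericalness of $S = \imm(\widehat{S})$, I would apply \eqref{eq:lagrangian} with $\widehat{S} = \widehat{X}$. Because $\widehat{S} \in \Sim\nh$ is a simple object over the algebraically closed field $\k$, Schur's lemma gives $\End(\widehat{S}) = \k$; combined with rigidity of $\nh$ (which kills $\Ext^1$) and heredity of $\mod \k Q$ (which kills $\Ext^{\geq 2}$), this yields $\Hom^\bullet(\widehat{S}, \widehat{S}) = \k$ concentrated in degree $0$. The sequence \eqref{eq:lagrangian} then reduces to
\begin{equation*}
 0 \to \k \to \Hom^\bullet(S, S) \to \Hom^\bullet(\widehat{S}, \widehat{S})^\vee[N] \to 0,
\end{equation*}
whose two outer terms are concentrated in degrees $0$ and $N$ respectively, so the sequence splits trivially by degree reasons and $S$ is $N$-spherical.

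For 1°, I would show that $\imm$ carries the description of $\Sim\tilt{\nh}{\sharp}{\widehat{S}}$ furnished by Proposition~\ref{pp:fini} to that of $\Sim\tilt{\h}{\sharp}{S}$. The crucial input is the identification, for every other simple $\widehat{X} \in \Sim\nh$ with $X = \imm(\widehat{X})$, of $\Ext^1(\widehat{X}, \widehat{S})$ with $\Ext^1(X, S)$ under $\imm$. Taking the degree-$1$ component of \eqref{eq:lagrangian} applied to the pair $(\widehat{X}, \widehat{S})$ gives
\begin{equation*}
 0 \to \Ext^1(\widehat{X}, \widehat{S}) \xrightarrow{\imm} \Ext^1(X, S) \to \Hom^{N-1}(\widehat{S}, \widehat{X})^* \to 0,
\end{equation*}
so the hypothesis $\Hom^{N-1}(\widehat{S}, \widehat{X}) = 0$ forces the required isomorphism. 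Since $\imm$ is exact and sends the universal map $\widehat{X} \to \widehat{S}[1] \otimes \Ext^1(\widehat{X}, \widehat{S})^*$ appearing in \eqref{eq:psi+} to the universal map $X \to S[1] \otimes \Ext^1(X, S)^*$ via this isomorphism, we obtain $\imm\bigl(\tilt{\psi}{\sharp}{\widehat{S}}(\widehat{X})\bigr) = \tilt{\psi}{\sharp}{S}(X)$; combining with $\imm(\widehat{S}[1]) = S[1]$ and Proposition~\ref{pp:fini} gives $\Sim\tilt{\h}{\sharp}{S} = \imm\bigl(\Sim\tilt{\nh}{\sharp}{\widehat{S}}\bigr)$, as required.

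Part 2° is entirely symmetric, using \eqref{eq:lagrangian} for the pair $(\widehat{S}, \widehat{X})$ and the formula \eqref{eq:psi-} to conclude $\tilt{\h}{\flat}{S} = \imm_*(\tilt{\nh}{\flat}{\widehat{S}})$. The main point requiring care is the naturality of the comparison of universal maps: one must verify that under $\imm$ a basis of $\Ext^1(\widehat{X}, \widehat{S})$ really matches the corresponding basis of $\Ext^1(X, S)$ used to assemble the universal evaluation morphism. However, this is precisely the content of the leftmost map $\imm$ in \eqref{eq:lagrangian}, so no further work is needed beyond unpacking the definition of an L-immersion.
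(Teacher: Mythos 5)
Your proof is correct and follows essentially the same route as the paper: sphericality comes from applying the sequence \eqref{eq:lagrangian} to $(\widehat{S},\widehat{S})$ with $\Hom^\bullet(\widehat{S},\widehat{S})=\k$ (rigid plus hereditary), and the tilting statements come from the degree-one piece of \eqref{eq:lagrangian}, where the hypothesis kills the dual term, so that exactness of $\imm$ transports the universal cone in \eqref{eq:psi+} (resp.\ \eqref{eq:psi-}) and Proposition~\ref{pp:fini} identifies the simples. If anything, you are slightly more careful than the paper's wording about which ordered pair the sequence is applied to, which is the right reading of the hypothesis $\Hom^{N-1}(\widehat{S},\widehat{X})=0$.
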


\begin{proof}
Since $\D(Q)$ is hereditary,
any rigid object $\widehat{M}\in \D(Q)$ is exceptional
and hence, by \eqref{eq:lagrangian}, $\imm(\widehat{M})$ is spherical.

For any $\widehat{X}(\ncong\widehat{S})$ in $\Sim\nh$,
let $X=\imm(\widehat{X})$.
Since $\Hom^{N-1}(\widehat{S},\widehat{X})=0$,
the short exact sequence \eqref{eq:lagrangian} gives an isomorphism
$
    \imm:\Hom^1(\widehat{X},\widehat{S})
        \xrightarrow{\sim} \Hom^1(X,S).
$
Since $\imm$ is exact, we have
\[
    \imm(\tilt{\psi}{\sharp}{\widehat{S}}(\widehat{X}))
    =\tilt{\psi}{\sharp}{S}(X),
\]
where $\tilt{\psi}{\sharp}{}$ is defined as in \eqref{eq:psi+}.
Then, by Proposition~\ref{pp:fini}, we have
$\imm_*( \tilt{\nh}{\sharp}{\widehat{S}} )= \tilt{\h}{\sharp}{S}$.
Similarly for $2^\circ$.
\end{proof}

\section{Main results}

In this section, we return to assuming that $N\geq 3$.

\subsection{Inducing hearts}

The natural quotient morphism $\qq{N} \to Q$ induces a functor
\begin{equation}\label{eq:hua I}
    \hua{I}:\D(Q) \to \D(\qq{N}).
\end{equation}
For more general dg algebras,
this functor was considered by Keller,
who showed (\cite[Lem.~4.4 (b)]{K8}) that
$\hua{I}$ is an L-immersion (and indeed that \eqref{eq:lagrangian} has a natural splitting).

Consider the subgraph $\EGp_N(\qq{N},\zero)$ in $\EGp(\qq{N})$
with canonical heart $\zero$ as base
(Definition~\ref{def:eg}).
Observe that $\hua{I}$ sends the simples in $\nzero$
to the corresponding simples in $\zero$ and hence we have
$\hua{I}_*(\nzero)=\zero$.

\begin{theorem}\label{thm:inducing}
Any heart in $\EGp_N(Q,\nzero)$ induces a heart in $\EGp_N(\qq{N},\zero)$
via the natural L-immersion $\hua{I}$ in \eqref{eq:hua I},
i.e. we have a well-defined map
\begin{equation}\label{eq:I1}
    \hua{I}_*\colon \EGp_N(Q,\nzero) \to \EGp_N(\qq{N},\zero).
\end{equation}
Moreover, it is an isomorphism between oriented graphs
and can be extended to an isomorphism
$\cc{\hua{I}_*}:\cc{\EGp_N}(Q,\nzero) \to \cc{\EGp_N}(\qq{N},\zero)$.
\end{theorem}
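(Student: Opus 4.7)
The plan is to build $\hua{I}_*$ by induction on tilting paths in the connected graph $\EGp_N(Q,\nzero)$, starting from $\hua{I}_*(\nzero[1])=\zero[1]$, which holds because $\hua{I}$ sends standard simples to standard simples. By Theorem~\ref{ppthm:psp}, every heart in $\EGp(Q)$ is rigid, so by Proposition~\ref{pp:comm} each induced heart upstairs is spherical and hence admits simple tilts at every simple. The inductive step to establish is: whenever a forward (resp.\ backward) tilt $\tilt{\nh}{\sharp}{\widehat{S}}$ (resp.\ $\tilt{\nh}{\flat}{\widehat{S}}$) remains in $\EGp_N(Q,\nzero)$, the corresponding tilt of $\h=\hua{I}_*(\nh)$ at $S=\hua{I}(\widehat{S})$ remains in $\EGp_N(\qq{N},\zero)$ and is the induced heart of the downstairs tilt.

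The crux is to verify the hypothesis of Proposition~\ref{pp:comm}(1). Writing any two simples of $\nh$ as $\widehat{S}=\widehat{S}_0[k]$ and $\widehat{X}=\widehat{X}_0[l]$ with $\widehat{S}_0,\widehat{X}_0\in\nzero$ indecomposable, Lemma~\ref{lem:parallel} together with \eqref{eq:Happ} gives $1\leq k,l\leq N-1$, and then the heredity of $\nzero$ forces
\[
  \Hom^{N-1}(\widehat{S},\widehat{X})=\Ext^{N-1+l-k}_{\k Q}(\widehat{S}_0,\widehat{X}_0)
\]
to vanish except possibly when $k=N-1$ and $l=1$. However, if $\widehat{S}\in\nzero[N-1]$, then $\widehat{S}_0$ has some simple $\nzero$-subobject $T_0$, so $\Hom(T_0[N-1],\widehat{S})\neq0$ with $T_0[N-1]\in\Sim\nzero[N-1]$, contradicting Corollary~\ref{cor:ss}(1); thus this case cannot arise when $\tilt{\nh}{\sharp}{\widehat{S}}\in\EGp_N(Q,\nzero)$, and Proposition~\ref{pp:comm}(1) yields $\tilt{\h}{\sharp}{S}=\hua{I}_*(\tilt{\nh}{\sharp}{\widehat{S}})$. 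To confirm that this new heart still lies in $\EGp_N(\qq{N},\zero)$, apply $\Hom(\hua{I}(T),-)$ to \eqref{eq:lagrangian} for $T\in\Sim\nzero[N-1]$: heredity forces $\Hom^N(\widehat{S},T)=0$, hence $\Hom(\hua{I}(T),S)\cong\Hom(T,\widehat{S})$, so the vanishing condition in Corollary~\ref{cor:ss} transfers from downstairs to upstairs. A dual argument will handle backward tilts.

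Injectivity of $\hua{I}_*$ on vertices will follow because simples of a finite heart form a basis of its Grothendieck group and $\hua{I}$ induces an isomorphism on $K$-theory via the identification of standard simples, so distinct simples have distinct images and hence distinct hearts have distinct induced hearts. Surjectivity will be the reverse induction starting from $\zero[1]$, where the same L-immersion argument shows that every tilt upstairs pulls back to a tilt downstairs remaining in $\EGp_N(Q,\nzero)$. Edges are preserved in both directions by Proposition~\ref{pp:comm}, so $\hua{I}_*$ is an isomorphism of oriented graphs. For the cyclic completion, $\hua{I}_*$ carries each maximal line segment $l(\nh,\widehat{S})\cap\EGp_N(Q,\nzero)$ onto $l(\h,S)\cap\EGp_N(\qq{N},\zero)$ with $S=\hua{I}(\widehat{S})$; by Proposition~\ref{pp:parallel} and its analogue on the $\qq{N}$ side both segments have length $N-1$, so the added cyclic edges match and we obtain the extension $\cc{\hua{I}_*}$. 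The main obstacle throughout will be the control of when tilting remains inside the interval $\EGp_N$ on each side, which is handled by combining \eqref{eq:lagrangian}, Corollary~\ref{cor:ss}, and the heredity of $\nzero$ as sketched above.
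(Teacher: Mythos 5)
Your overall strategy is essentially the paper's: the same induction starting from $\hua{I}_*(\nzero[1])=\zero[1]$, with the inductive step carried by Proposition~\ref{pp:comm} after checking $\Hom^{N-1}(\widehat{S},\widehat{X})=0$ via \eqref{eq:Happ} and heredity. Your bookkeeping of the interval differs slightly but soundly: you transfer the Hom-vanishing criterion of Corollary~\ref{cor:ss} through the exact sequence \eqref{eq:lagrangian}, killing the correction term $\Hom^N(\widehat{S},T)^*$ by heredity, where the paper instead notes that $\Ho{N-1}(\widehat{S})=0$ forces $\widehat{S}\in\nzero[m]$ with $m\leq N-2$, hence $S=\hua{I}(\widehat{S})\in\zero[m]$, and applies Lemma~\ref{lem:parallel} upstairs; run in reverse, your transfer also gives a workable surjectivity induction in place of the paper's maximal-line-segment argument, and the cyclic-completion step matches. (Minor slip: in the forward-tilt exclusion of $\widehat{S}\in\nzero[N-1]$ you need item $2^\circ$ of Corollary~\ref{cor:ss}, not $1^\circ$.)

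The genuine gap is the injectivity step. From the isomorphism that $\hua{I}$ induces on Grothendieck groups you can only conclude that if $\hua{I}_*(\nh_1)=\hua{I}_*(\nh_2)$ then $\nh_1$ and $\nh_2$ have the same classes of simples in $K(\D(Q))$. But classes do not determine objects: non-isomorphic indecomposable modules can share a dimension vector, and $[\widehat{S}]=[\widehat{S}[2]]$; correspondingly, hearts in the interval are not determined by the classes of their simples --- for $N\geq 4$ the hearts $\nzero[1]$ and $\nzero[3]$ both lie in $\EGp_N(Q,\nzero)$ and have identical simple classes. So the inference ``distinct simples have distinct images'' does not follow from the $K$-theory isomorphism, and injectivity of $\hua{I}_*$ is not established by your argument. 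What is actually needed (and what the paper uses) is that a heart is determined by its set of simples as objects, combined with injectivity of the functor $\hua{I}$ itself on isomorphism classes of the relevant objects; this is a property of the restriction functor \eqref{eq:hua I} (for instance, for $N\geq 3$ restriction along the degree-zero inclusion $\k Q\subset\qq{N}$ retracts $\hua{I}$), not of its shadow on $K$-theory, and your proof needs this ingredient supplied before the claimed graph isomorphism (whose edge-faithfulness also relies on it) goes through.
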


\begin{proof}
We prove that $\hua{I}_*$ is well-defined
by induction starting from $\hua{I}_*(\nzero)=\zero$.
Thus, if $\hua{I}_*(\nh)=\h$, for some
$\nh, \tilt{\nh}{\sharp}{\widehat{S}}\in\EG(Q,\nzero)$,
$\widehat{S}\in\Sim\nh$ and $\h\in\EGp_N(\qq{N},\zero)$,
then we need to show that
$\tilt{\nh}{\sharp}{\widehat{S}}$ induces a heart in $\EGp_N(\qq{N},\zero)$.

For any $\widehat{X}\in\Sim\nh$,
by the first part of Lemma~\ref{lem:parallel} and \eqref{eq:Happ}, we know that
$\widehat{X}\in\Ind\nzero[m]$ for some $0\leq m(\widehat{X})\leq N-2$.
By the second part of Lemma~\ref{lem:parallel}, we have $\Ho{N-2}(\widehat{S})=0$
which implies $m(\widehat{S})< N-2$,
where the homology $\Ho{\bullet}$ is with respect to $\nzero$.
Then $\Hom^{N-1}(\widehat{S},\widehat{X})=0$, since $\nzero$ is hereditary.
By Proposition~\ref{pp:comm} we have
$\imm_*( \tilt{\nh}{\sharp}{\widehat{S}} )= \tilt{\h}{\sharp}{S}$.
Since $S=\imm(\widehat{S})\in\zero[m(\widehat{S})]$ for some $m(\widehat{S})< N-2$,
by the second part of Lemma~\ref{lem:parallel}, we know that
$\tilt{\h}{\sharp}{S}$ is in $\EGp_N(\qq{N},\zero)$.

The injectivity of $\hua{I}_*$ follows from the facts that
a heart is determined by its simples and $\hua{I}$ is injective.

For surjectivity of $\hua{I}_*$, we consider the line segments.
By the first part of Lemma~\ref{lem:parallel},
any line segment in $\EGp_N(\qq{N}, \zero)$ has length less or equal than $N-1$.
Notice that,
by Proposition~\ref{pp:parallel}, any maximal line segment
in $\EGp_N(Q, \nzero)$ has length $N-1$,
and hence its image under $\hua{I}_*$ is a maximal line segment in $\EGp_N(\qq{N}, \zero)$.
This implies that,
if a heart $\h$ in $\EGp_N(\qq{N}, \zero)$ is induced from some heart
$\nh\in\EGp_N(Q, \nzero)$ via $\hua{I}$,
then the maximal line segment $l(\h,S)\cap\EGp_N(\qq{N}, \zero)$
is induced from the line segment $l(\nh,\widehat{S})\cap\EGp_N(Q, \nzero)$ via $\hua{I}$,
where $S\in\Sim\h$, and $\widehat{S}\in\Sim\nh$ such that $\hua{I}(\widehat{S})=S$.
Hence any simple tilt of an induced heart via $\hua{I}$ is also induced via $\hua{I}$,
provided this tilt is still in $\EGp_N(\qq{N}, \zero)$.
Thus, inductively, we deduce that $\hua{I}_*$ is surjective.

The last assertion follows from the facts that
we can cyclically complete $\EGp_N(Q,\nzero)$ (Proposition~\ref{pp:parallel})
and $\hua{I}_*$ preserves the structure of line segments.
\end{proof}

\begin{remark}\label{rem:induced}
As Theorem~\ref{thm:inducing} tells us that
every heart $\h\in \EGp_N(\qq{N},\zero)$ is induced
and hence finite, we also deduce that $\h$ is monochromatic,
by Proposition~\ref{pp:mono2}, and spherical, by
Proposition~\ref{pp:comm}, because every heart in $\EGp(Q)$
is rigid, by Theorem~\ref{ppthm:psp}.

We can also apply $\hua{I}_*$ to the sequence of tilts in Corollary~\ref{cor:tilt-shift}
and deduce, as there, that $\zero[k]\in\EGp_N(\qq{N}, \zero)$, for $0\leq k\leq N-2$, and
 $\zero[k]\in\EGp(\qq{N})$, for $k\in\ZZ$.
 \end{remark}

\begin{proposition}\label{pp:br inj}
$\Br(\qq{N}) \cdot \EGp_N(\qq{N},\zero)=\EGp(\qq{N})$.
\end{proposition}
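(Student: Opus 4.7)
The plan is to establish both containments, with the central technical ingredient being the key identity
\[
  \tilt{\h}{(N-1)\sharp}{S} \;=\; \phi_S(\h),
\]
valid for any spherical monochromatic heart $\h$ in $\EGp(\qq{N})$ and spherical simple $S\in\Sim\h$ for which the intermediate hearts along the tilt-line also remain spherical and monochromatic. I would prove this identity by explicit comparison of simples: both hearts carry $S[N-1]=\phi_S(S)$ at the moved vertex, while iterating the change-of-simples formula \eqref{eq:psp1} a total of $N-1$ times along the line produces, for each other simple $X\in\Sim\h$, the same object as $\phi_S(X)$, with the conjugation relation \eqref{eq:ST} and monochromaticity used to track the intermediate spherical twists. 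This identity is the principal obstacle of the proof.

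For the inclusion $\Br\cdot\EGp_N(\qq{N},\zero)\subseteq\EGp(\qq{N})$, I first observe that $\EGp_N(\qq{N},\zero)\subseteq\EGp(\qq{N})$ because iterated simple forward tilting of $\zero$ along an appropriate ordering of $\Sim\zero$ reaches $\zero[1]$ (the direct analog of Corollary~\ref{cor:tilt-shift}, valid in the spherical monochromatic setting thanks to Remark~\ref{rem:mono}). Applying the key identity to $\zero$ then gives $\phi_S(\zero)=\tilt{\zero}{(N-1)\sharp}{S}$ for every generator $\phi_S$ of $\Br$, exhibiting $\phi_S(\zero)$ as the endpoint of an explicit tilt-path from $\zero$ and hence placing it in $\EGp(\qq{N})$; since $\Br$ acts by auto-equivalences that preserve the edges of $\EG(\qq{N})$, the full orbit $\Br\cdot\EGp_N(\qq{N},\zero)$ lies in $\EGp(\qq{N})$.

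For the reverse inclusion $\EGp(\qq{N})\subseteq\Br\cdot\EGp_N(\qq{N},\zero)$, I would induct along tilt-paths from $\zero$ in $\EGp(\qq{N})$, maintaining the strengthened hypothesis that each visited heart $\h$ is spherical and monochromatic, satisfies $\h=\phi(\h_0)$ for some $\phi\in\Br$ and $\h_0\in\EGp_N(\qq{N},\zero)$, and moreover $\phi_T\in\Br$ for every $T\in\Sim\h$. The base case $\h=\zero$ is clear. For the inductive step on a forward tilt $\tilt{\h}{\sharp}{S}$, set $S_0:=\phi^{-1}(S)\in\Sim\h_0$; if $\tilt{\h_0}{\sharp}{S_0}\in\EGp_N(\qq{N},\zero)$, the same $\phi$ works. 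Otherwise, Theorem~\ref{thm:inducing} transports Proposition~\ref{pp:parallel} to $\EGp_N(\qq{N},\zero)$, so the maximal line segment through $\h_0$ in direction $S_0$ has $N-1$ elements with $\h_0$ at its end; letting $\h_0'$ be its start, the key identity yields $\tilt{\h_0}{\sharp}{S_0}=\tilt{(\h_0')}{(N-1)\sharp}{S_0}=\phi_{S_0}(\h_0')$, hence $\tilt{\h}{\sharp}{S}=(\phi\circ\phi_{S_0})(\h_0')$. By the inductive hypothesis and the conjugation relation $\phi_{S_0}=\phi^{-1}\phi_S\phi$ (from iterating \eqref{eq:ST}), we have $\phi\circ\phi_{S_0}\in\Br$. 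Preservation of the twist-functor part of the hypothesis for the new heart follows from Proposition~\ref{pp:fini}, Remark~\ref{rem:mono} (which expresses each new simple as $S[1]$, an unchanged $X$, or $\phi_S^{-1}(X)$), and one more application of \eqref{eq:ST}; backward tilts are handled symmetrically using the dual identity $\tilt{\h}{(N-1)\flat}{S}=\phi_S^{-1}(\h)$.
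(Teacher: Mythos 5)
Your overall strategy is exactly the paper's: the paper proves this proposition by induction along tilt-paths, transporting by $\Br$, and in the crucial case (when the tilt leaves the fundamental domain) it establishes, by comparing simples along the maximal line segment using the twist form of the change-of-simples formulae (Remark~\ref{rem:mono}) and the inducedness/monochromaticity of the hearts on that segment, precisely your ``key identity'' --- recorded afterwards as \eqref{eq:root} in Corollary~\ref{cor:many}. Your bookkeeping of the braid-group element via \eqref{eq:ST} and the strengthened inductive hypothesis that twists of simples lie in $\Br$ is also in the spirit of the paper (which handles that point in Corollary~\ref{cor:many}).

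Two concrete corrections. First, your key identity has the wrong sign: since $\phi_S(S)=S[1-N]$, the simple at the moved vertex of $\tilt{\h}{(N-1)\sharp}{S}$ is $S[N-1]=\phi_S^{-1}(S)$, not $\phi_S(S)$, and the correct statements are $\tilt{\h}{(N-1)\sharp}{S}=\phi_S^{-1}(\h)$ and $\tilt{\h}{(N-1)\flat}{S}=\phi_S(\h)$, as in \eqref{eq:root}; as stated, your identity is false and its proposed proof (``both hearts carry $S[N-1]=\phi_S(S)$'') would fail at the first comparison. Because your argument only ever uses that the resulting functor lies in $\Br$, the error is repairable by flipping the sign consistently, but it must be flipped. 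Second, the one step you gloss over is the actual engine of the identity: to conclude that each other simple $X\in\Sim\h$ is twisted \emph{exactly once} in the course of the $N-1$ tilts, you need $\Hom^{\bullet}(X,S)$ to be concentrated in a single degree $m$ with $1\leq m\leq N-1$. Monochromaticity gives the single degree but not the bound; the paper obtains $1\leq m\leq N-1$ from the fact that the hearts on the segment are induced via the L-immersion, using \eqref{eq:lagrangian} together with heredity of $\k Q$ (alternatively, one can get it from Calabi--Yau-$N$ duality \eqref{eq:serre} applied to a pair of non-isomorphic simples of a heart). Without this bound the claim that iterating \eqref{eq:psp1} $N-1$ times produces $\phi_S^{-1}(X)$ is unjustified, so this point should be made explicit.
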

\begin{proof}
We use induction starting from Theorem~\ref{thm:inducing}.
Suppose $\h'\in\EGp(\qq{N})$, with $\h'=\phi(\h)$ for $\phi\in\Br(\qq{N})$
and $\h\in\EGp_N(\qq{N}, \zero)$.
For any simple $S'$ in $\h'$ we have
$(\h')^{\sharp}_{S'}=\phi ( \tilt{\h}{\sharp}{S} )$,
for $S=\phi^{-1}(S')$.
Thus it is sufficient to prove that $\tilt{\h}{\sharp}{S}$
is in $\Br(\qq{N}) \cdot \EGp_N(\qq{N},\zero)$,
for any simple $S$ in $\h$.

If $\tilt{\h}{\sharp}{S}$ is still in $\EGp_N(\qq{N}, \zero)$,
then there is nothing to prove, so suppose that
$\tilt{\h}{\sharp}{S}\notin\EGp_N(\qq{N}, \zero)$.
As in the proof of Theorem~\ref{thm:inducing},
the maximal line segment
\[
  l(\h,S) \cap\EGp_N(\qq{N}, \zero)
  = \{\tilt{\h}{m\flat}{S}\}_{m=0}^{N-2}
\]
is induced from the maximal line segment
\[
  l(\nh,\widehat{S})\cap\EGp_N(Q,\nzero)
 = \{\tilt{\nh}{m\flat}{\widehat{S}}\}_{m=0}^{N-2},
\]
where $\h=\hua{I}_*(\nh)$ and $S= \hua{I}(\widehat{S})$.
Let $\h^-=\tilt{\h}{(N-2)\flat}{S}$ and $S^-=S[2-N]$.
By Remark~\ref{rem:induced}, each $\tilt{\h}{m\flat}{S}$ is finite, spherical
and monochromatic.
Applying Proposition~\ref{pp:fini}, with \eqref{eq:psi+} replaced by
\eqref{eq:psp1}, to the simple forward tilt of $\tilt{\h}{m\flat}{S}$
with respect to $S[-m]$, for $m=N-2,N-1,\ldots,0$,
we deduce that the changes of simples from $\h^-$ to $\tilt{\h}{\sharp}{S}$
are as follows:
\begin{itemize}
\item
    $S^-\in\Sim\h^-$ becomes $S[1]\in \Sim \tilt{\h}{\sharp}{S}$
    which equals $\phi^{-1}_S(S^-)$;
\item
    if $X\in\Sim\h^-$, with $X\neq S^-$ and $\Hom^\bullet(X,S)=0$,
    then it remains in $\Sim \tilt{\h}{\sharp}{S}$,
    but then also $X=\phi^{-1}_S(X)$.
\item
    if $X\in\Sim\h^-$, with $X\neq S^-$ and $\Hom^\bullet(X,S)\neq0$,
    then, since $\h^-$ is monochromatic, $\Hom^\bullet(X,S)=\Hom^k(X,S^-)$,
    for some integer $k>0$.
    Since $\h^-$ is induced from a heart in $\EGp_N(Q,\nzero)$,
    \eqref{eq:lagrangian}
    and \eqref{eq:hom-bound} imply that $1\leq k\leq N-1$.
    Then $X$ remains in $\Sim \tilt{\h}{m\flat}{S}$ for
    $m=N-2,\ldots,N-1-k$,
    then becomes and remains $\phi^{-1}_S(X)$
    in $\Sim \tilt{\h}{m\flat}{S}$ for $m=N-k,\ldots,-1$.
\end{itemize}
Thus $\Sim \tilt{\h}{\sharp}{S} = \phi^{-1}_{S} \bigl(\Sim \h^-\bigr)$ and so,
as the simples determine the heart, we conclude that
$\tilt{\h}{\sharp}{S}=\phi^{-1}_{S} (\h^-)$,
as required.
\end{proof}

\begin{corollary}\label{cor:all-induced}
Every heart in $\EGp(\qq{N})$ is induced (Definition~\ref{def:induced}) and
hence finite, spherical and monochromatic.
Moreover, for any heart $\h$ in $\EGp(\qq{N})$,
the set of twist functors of its simples is a set of generators of $\Br(\qq{N})$.
Further, for any $S\in\Sim\h$, we have
    \begin{equation}\label{eq:root}
        \tilt{\h}{ \pm(N-1)\sharp}{S} = \phi_{S}^{\mp1} (\h ).
    \end{equation}
\end{corollary}

\begin{proof}
Proposition~\ref{pp:br inj} shows that every heart is induced
via the L-immersion which is the composition of
the natural L-immersion $\hua{I}$ with some twist functors.
Then every heart is finite, spherical and monochromatic,
as in Remark~\ref{rem:induced}.
Hence Proposition~\ref{pp:fini} applies, with \eqref{eq:psi+} and
\eqref{eq:psi-} replaced by
\eqref{eq:psp1} and \eqref{eq:psp2},
and so, by \eqref{eq:ST}, the simple twist functors
of two hearts related by a simple tilt generate the same group.
Thus the second assertion follows by induction.

Further, we know that
\eqref{eq:root} is true for any heart $\h^-\in\hua{I}_*(\EGp_N(Q,\nzero))$
with simple $S^-$ as in Proposition~\ref{pp:br inj}.
Hence it is true for any hearts in $l(\h^-,S^-)$,
which implies it is also true for any heart induced via $\hua{I}_*$,
by Proposition~\ref{pp:parallel}.
Notice that the autoequivalences preserve \eqref{eq:root},
thus this equation holds for any heart in $\EGp(\qq{N})$ by Proposition~\ref{pp:br inj}.
\end{proof}

\begin{corollary}\label{cor:signs}
Let $\h$ and $\h'$ be hearts in $\EGp(\qq{N})$ in the same braid group orbit, i.e.
$\phi(\h)=\h'$ for some $\phi\in\Br(\qq{N})$.
Then there exists a sequence of spherical objects
$T_0,\ldots,T_{m-1}$ in hearts $\h_0,\ldots,\h_{m-1}$
(for some integer $m\geq 0$) together with signs
$\epsilon_i\in\{\pm1\}, i=0,\ldots,m-1$,
such that $\h_0=\h$,
\begin{equation}\label{eq:sequence}
    \h_{i+1}=\tilt{(\h_i)}{\epsilon_i(N-1)\sharp}{T_i}, \quad i=0,1,\ldots,m-1,
\end{equation}
and $\h_m=\h'$.
\end{corollary}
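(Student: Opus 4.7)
The plan is induction on the word length $\ell(\phi)$ of $\phi$ in the standard generators $\{\phi_S^{\pm1} : S\in\Sim\zero\}$ of $\Br(\qq{N})$. The base case $\ell(\phi)=0$ is trivial: $\phi=\id$ forces $\h=\h'$, and we take $m=0$.

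For the inductive step, factor $\phi=\phi_0\circ g$ with $g=\phi_S^{\delta}$ a single generator and $\ell(\phi_0)=\ell(\phi)-1$. Applying the inductive hypothesis to $\phi_0$ and the hearts $g(\h)$, $\phi_0(g(\h))=\h'$ yields a tilt sequence from $g(\h)$ to $\h'$ of the stated form. It therefore suffices to construct a tilt sequence from $\h$ to $g(\h)=\phi_S^{\delta}(\h)$ for a single generator.

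The two key tools, both from Corollary~\ref{cor:many}, are: equation \eqref{eq:root} which identifies $\tilt{\h}{\pm(N-1)\sharp}{T}=\phi_T^{\mp1}(\h)$ for any $T\in\Sim\h$; and the assertion that the twists $\{\phi_T^{\pm1} : T\in\Sim\h\}$ generate $\Br(\qq{N})$. If a shift of $S$ lies in $\Sim\h$, then by the shift-invariance $\phi_{X[n]}=\phi_X$ of spherical twists, $\phi_S^{\delta}(\h)$ is reached in a single $(N-1)$-tilt via \eqref{eq:root}. Otherwise, factor $\phi_S^{\delta}=\phi_{U_r}^{\eta_r}\cdots\phi_{U_1}^{\eta_1}$ with $U_i\in\Sim\h$, apply $\phi_{U_1}^{\eta_1}$ as a single $(N-1)$-tilt to land at $\h_1=\phi_{U_1}^{\eta_1}(\h)$, and then iterate: the residual braid $\phi_{U_r}^{\eta_r}\cdots\phi_{U_2}^{\eta_2}$, acting on $\h_1$, is re-expressed in terms of simples of $\h_1=\phi_{U_1}^{\eta_1}(\h)$ via the conjugation formula \eqref{eq:ST} and the identification $\Sim\h_1=\phi_{U_1}^{\eta_1}(\Sim\h)$.

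The main obstacle is controlling termination of this inner iteration, since re-expressing each old twist via conjugation can a priori inflate the residual word. The resolution is that the outer conjugating factors telescope, so the residual element retains bounded length as an abstract braid and the construction halts after finitely many $(N-1)$-tilts. Alternatively, the required sequence can be extracted directly from the recursive construction in the proof of Proposition~\ref{pp:br inj}, which tracks how the braid representative of a heart in $\EGp(\qq{N})$ evolves under simple tilts: each change of representative is a multiplication by some $\phi_S^{\mp1}$, which by \eqref{eq:root} realises as precisely one $(N-1)$-tilt in the exchange graph.
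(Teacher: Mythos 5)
There is a genuine gap, and it sits exactly where the real content of the corollary lies. Your outer induction (peeling the innermost standard generator $g=\phi_S^{\delta}$, $S\in\Sim\zero$) is a legitimate reduction, but it reduces to the case of a single standard generator acting on an \emph{arbitrary} heart $\h$, and since $S$ need not be a shift of a simple of $\h$, that case is essentially as hard as the full statement: writing $\phi_S^{\delta}=\phi_{U_r}^{\eta_r}\cdots\phi_{U_1}^{\eta_1}$ with $U_i\in\Sim\h$ (Corollary~\ref{cor:many}) leaves you with an arbitrary word in twists at simples of $\h$ to realise by $(N-1)$-tilts. Your inner iteration does not work as described: after the first tilt you sit at $\h_1=\phi_{U_1}^{\eta_1}(\h)$, and the residual braid $\phi_{U_r}^{\eta_r}\cdots\phi_{U_2}^{\eta_2}$ is a word in twists at simples of $\h$, not of $\h_1$. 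Rewriting it via \eqref{eq:ST} in the generators $\phi_V$ with $V\in\Sim\h_1=\phi_{U_1}^{\eta_1}(\Sim\h)$ yields $\phi_{U_1}^{-\eta_1}\bigl(\phi_{V_r}^{\eta_r}\cdots\phi_{V_2}^{\eta_2}\bigr)\phi_{U_1}^{\eta_1}$, a word of length $r+1$ rather than $r-1$; the residual element keeps length $r-1$ only in the \emph{old} generators, which are useless for the next application of \eqref{eq:root} since that requires a simple of the current heart. So the claimed ``telescoping'' is precisely the point that needs proof and is not supplied, and the alternative suggestion of ``extracting'' the sequence from the proof of Proposition~\ref{pp:br inj} is not worked out either.

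The paper avoids this bookkeeping by a different induction, and that device is what your argument is missing. It writes $\phi$ from the outset as a word $\phi_{t_{s}}^{\lambda_{s}}\circ\cdots\circ\phi_{t_{0}}^{\lambda_{0}}$ in twists at simples of the fixed base heart $\h$ (Corollary~\ref{cor:many}), inducts on the length of that word, and peels off the \emph{outermost} factor $\varphi=\phi_{t_s}^{\lambda_s}$: the inductive hypothesis gives a tilt sequence from $\h$ to $\varphi^{-1}(\h')$ through hearts $\h'_i$ with tilting objects $R_i$, and this entire sequence is transported by the autoequivalence $\varphi$ (tilting $\varphi(\h'_{i-1})$ at $\varphi(R_{i-1})$ gives $\varphi(\h'_i)$, since autoequivalences commute with simple tilting), while the one extra step $\h\to\varphi(\h)$ is a single $(N-1)$-tilt at $S_{t_s}\in\Sim\h$ by \eqref{eq:root}. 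No re-expression of the residual word in the simples of the new heart is ever needed, which is exactly the step you cannot control. If you want to keep your outer structure, replace the inner iteration by this lemma, proved by the outermost-peeling-plus-transport argument: for any heart $\h$ and any word of length $r$ in twists at simples of $\h$, there is a sequence of $r$ tilts of the form \eqref{eq:sequence} from $\h$ to its image.
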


\begin{proof}
Fix $\h$ and let $\Sim\h=\{S_1,\ldots,S_n\}$, $\phi_k=\phi_{S_k}$ for $1\leq k\leq n$.
Since $\phi_1,\ldots,\phi_n$ generate $\Br(\qq{N})$ by Corollary~\ref{cor:all-induced},
we have
\[
    \phi=\phi_{t_{m-1}}^{\lambda_{m-1}}\circ\cdots\circ\phi_{t_{0}}^{\lambda_{0}}
\]
for some $t_j\in\{1,\ldots,n\}$ and $\lambda_j\in\{\pm1\}$.
Use induction on $m$.
If $m=0$, i.e. $\h=\h'$, there is nothing to prove.
Suppose the statement holds for $m\leq s$ and consider the case when $m=s+1$.
Write $\varphi=\phi_{t_s}^{\lambda_s}$.
By the inductive hypothesis, given hearts $\h$ and
\[
    \varphi^{-1}(\h')=
    \left(  \phi_{t_{s-1}}^{\lambda_{s-1}}
        \circ\cdots\circ\phi_{t_{0}}^{\lambda_{0}}  \right)(\h),
\]
there are spherical objects $R_0,R_2,\ldots,R_{s-1}$
and signs $\varepsilon_i\in\{\pm1\}$, such that $\h'_0=\h$,
\[
    \h'_{i+1}=\tilt{(\h'_i)}{\varepsilon_i(N-1)\sharp}{R_i},
    \quad i=0,1,\ldots,s-1
\]
and $\h'_{s}=\varphi^{-1}(\h')$.
Let $T_0=S_{t_m}, \epsilon_0=\lambda_m$
and $T_i=\varphi(R_{i-1}), \epsilon_i=\varepsilon_{i-1}$ for $i=1,\ldots,s$.
Then we have $\h_0=\h$, $\h_1=\varphi(\h_0)$ and (inductively)
\[
    \h_{i+1}=\tilt{(\h_i)}{\epsilon_i(N-1)\sharp}{T_i}
    =\tilt{\left(\varphi(\h'_{i-1})\right)}{\varepsilon_{i-1}(N-1)\sharp}
        {\varphi(R_{i-1})}
    =\varphi(\h'_i)
\]
for $i=1,\ldots,s$.
In particular, we have $\h_{s+1}=\varphi(\h'_s)=\h'$ as required.
\end{proof}

\subsection{The circle of identifications}
\newcommand{\taut}{p} 

By Proposition~\ref{pp:br inj}, the tautological map
\[
  \taut\colon \EGp_N(\qq{N}, \zero) \to \EGp(\qq{N})/\Br,
\]
is a surjection on vertices.

By Theorem~\ref{thm:inducing},
$\EGp_N(\qq{N}, \zero)$ carries the same linear structure as $\EGp_N(Q,\nzero)$,
and so, just as we extended $\com$ to $\cc{\com}$ in Theorem~\ref{thm:comparison},
we can also extend $\taut$ to the cyclic completion
\begin{equation}\label{eq:eg}
  \cc{\taut}\colon \cc{\EGp_N}(\qq{N},\zero) \to \EGp(\qq{N})/\Br
\end{equation}
to get an epimorphism of oriented graphs,
i.e. also a surjection on edges.
More precisely, in the notation of Definition~\ref{def:convex},
$\cc{\taut}$ sends the new edge
$e_l\colon \h \to \h^-=\tilt{\h}{(N-2)\flat}{S}$, in each basic cycle $c_l$,
to the edge in $\EGp(\qq{N})/\Br$
induced by $(\h \xrightarrow{S} \tilt{\h}{\sharp}{S})$,
where $c_l$ is induced by the line $l=l(\h,S)$ such that
\[
    l(\h,S)\cap\EGp_N(\qq{N}, \zero)=\{\tilt{\h}{i\flat}{S}\}_{i=0}^{N-2}.
\]
As shown in the proof of Proposition~\ref{pp:br inj},
we have $\tilt{\h}{\sharp}{S}=\phi^{-1}_{S} (\h^-)$,
so that $\cc{\taut}$ is indeed a map of graphs.
The fact that $\cc{\taut}$ is a surjection on edges follows,
as in the proof of Theorem~\ref{thm:comparison},
from the fact that both graphs are $(n,n)$-regular,
\New{i.e. have $n$ incoming and $n$ outgoing edges at each vertex,
where $n=\# Q_0$.}

Our goal is to show that $\cc{\taut}$ is an isomorphism, which in particular
means that $\EGp_N(\qq{N}, \zero)$ is a fundamental domain for the braid
group action on $\EGp(\qq{N})$. Since we already know,
by Theorems~\ref{thm:inducing} and~\ref{thm:comparison},
that
\[
  \cc{\EGp_N}(\qq{N},\zero) \;\cong\;
  \cc{\EGp_N}(Q, \nzero) \;\cong\;
  \CEG{N-1}{Q},
\]
this will enable us to deduce, as promised, that the braid group quotient
of the exchange graph of $\qq{N}$ is the cluster exchange graph, i.e.
\[
  \EGp(\qq{N})/\Br \cong \CEG{N-1}{Q}.
\]

To make the proof and also to see that this isomorphism is natural,
recall that the \CY{N} version of Amiot's construction
\cite[Sec.~2]{Guo1} (cf. \cite[Sec.~2]{A}) gives a quotient functor
$\per(\qq{N}) \to \C{N-1}{Q}$,
where $\per(\qq{N})$ is the perfect derived category of $\qq{N}$.
By \cite[Sec.4]{KY} and \cite[Sec.3]{Guo2},
every heart $\h\in\EGp(\qq{N})$ induces a t-structure on $\per(\qq{N})$
and determines a
silting set $\Proj\h$
in $\per(\qq{N})$, which maps by the quotient functor
to a cluster tilting set in $\C{N-1}{Q}$.
Thus we have a map
\begin{equation}\label{eq:cov}
    \cov:\EGp(\qq{N}) \to \CEG{N-1}{Q}.
\end{equation}
In particular, $\cov$ maps $\zero$ to the initial cluster tilting set
$\CTS_Q$.

\begin{theorem}\label{thm:main}
Let $Q$ be an acyclic quiver.
The map $\cc{\taut}$ in \eqref{eq:eg} gives an isomorphism of
oriented graphs
\begin{equation}\label{eq:eg1}
   \cc{\EGp_N}(\qq{N}, \zero)\cong\EGp(\qq{N})/\Br(\qq{N}).
\end{equation}
Furthermore, the map $\cov$ in \eqref{eq:cov} is a $\Br$-invariant
map of oriented graphs and induces a graph isomorphism
\begin{equation}\label{eq:eg2}
    \EGp(\qq{N})/\Br(\qq{N})    \cong  \CEG{N-1}{Q}.
\end{equation}
\end{theorem}

\begin{proof}
To see that $\cov$ is a graph map,
observe that simple tilting of hearts corresponds to mutation of silting sets,
by the argument of Koenig-Yang \cite[Thm.~7.12]{KY}
(in fact we need the result for homologically smooth non-positive dg algebras
attributed therein to  Keller-Nicol\'{a}s \cite{KN2}).
Then mutation of silting sets corresponds to mutation of cluster tilting sets,
using \cite[Prop.~2.15]{Guo1} (cf. \cite[Prop.~2.9]{A}).

To see that $\cov$ is $\Br$-invariant,
observe that, by Corollary~\ref{cor:signs},
if two hearts $\h,\h'\in\EGp(\qq{N})$ are in the same braid group orbit,
then $\h'$ can be obtained from $\h$ by a sequence of \New{$N-1$} simple tiltings
as in \eqref{eq:sequence}.
Then $\cov(\h)=\cov(\h')$ because repeating the same mutation $N-1$ times
returns every cluster tilting set back to itself.
Hence $\cov$ induces a graph map
\[
  \covtil:\EGp(\qq{N})/\Br\to\CEG{N-1}{Q}.
\]

Noting that the initial points $\nzero$, $\zero$ and $\CTS_Q$ match up
and that simple tilting of hearts in $\EGp_N(Q, \nzero)$ or $\EGp(\qq{N})$
corresponds to mutation of cluster tilting sets,
we obtain the following commutative diagram of graph maps
\begin{equation}\label{eq:diagram}
\xymatrix@C=3pc{
    \cc{\EGp_N}(Q, \nzero) \ar[d]^{\cc{\hua{I}_*}}
        \ar[r]^{\cc{\com}}
        & \CEG{N-1}{Q}\\
    \cc{\EGp_N}(\qq{N}, \zero) \ar@{->}[r]^{\cc{\taut}}
        & \EGp(\qq{N})/\Br \ar@{->}[u]^{\covtil}
}\end{equation}
The fact that $\cc{\com}$ and $\cc{\hua{I}_*}$ are isomorphisms
(Theorem~\ref{thm:comparison} and Theorem~\ref{thm:inducing})
and $\cc{\taut}$ is
an epimorphism
implies that both $\cc{\taut}$ and $\covtil$ are
isomorphisms, as required.
\end{proof}

\begin{remark}
We need the canonical heart as base on the left-hand-side
to ensure the isomorphism \eqref{eq:eg1} holds.
Example~\ref{ex:x} illustrates this phenomenon.
However, if $N=3$, the isomorphism \eqref{eq:eg1} holds for any heart
(see Section~\ref{sec:cy3}).
Further, for $N=3$, Keller-Nicol\'{a}s (cf. \cite[Thm.~5.6]{K6})
prove \eqref{eq:eg2} in full generality, that is,
when $Q$ is a loop-free, 2-cycle-free quiver with a polynomial potential $W$.
\end{remark}

\begin{example}\label{ex:x}
Let $Q$ be a quiver of type $A_2$
and $\Sim\zero=\{S,T\}$ with $\Ext^1(S,T)=\k$.
Figure~\ref{fig:2} shows the cyclic completions of two exchange graphs:
$\cc{\EGp_4}(\qq{4}, \zero)$ on the left and
$\cc{\EGp_4}(\qq{4}, \tilt{(\zero)}{\sharp}{S} )$ on the right.
The solid arrows are the edges in $\EGp(\qq{4})$
and the dotted arrows are the extra edges in the cyclic completions.
The vertices $\otimes$ and $\odot$ represent the source and sink
(i.e. $\h$ and $\h[2]$ in fact)
in the exchange graph $\EGp_4(\qq{4},\h)$ with base $\h$.
Notice that $\cc{\EGp_4}(\qq{4},\tilt{(\zero)}{\sharp}{S} )$
cannot be isomorphic to $\EGp(\qq{4})/\Br$,
because it has the wrong number of vertices.

\begin{figure}[t]\centering
\newcommand{\vsource}{\otimes}
\newcommand{\vsink}{\odot}
\newcommand{\vertx}{\bullet}
\begin{tikzpicture}[xscale=1,yscale=.8, 
  arrow/.style={->,>=stealth,thick},
  c-all/.style={black}]
\foreach \j/\x/\y in {2/1/1,3/2/2,4/2/-2,5/2/0,6/3/-1,7/4/2,8/4/0,9/5/1,10/3/-3,11/4/-2}
  \draw[c-all] (\x,\y) node (v\j) {$\vertx$};
\draw[c-all] (0,0) node (v1) {$\vsource$};
\draw[c-all] (6,0) node (v12) {$\vsink$};
\draw[c-all,arrow] (v1) edge (v2) edge (v4) ;
\draw[c-all,arrow] (v2) edge (v3) edge (v5) ;
\draw[c-all,arrow] (v3) edge[dotted, bend right] (v1) edge (v8) ;
\draw[c-all,arrow] (v4) edge (v6) edge (v10) ;
\draw[c-all,arrow] (v5) edge (v6) edge (v7) ;
\draw[c-all,arrow] (v7) edge (v9) edge[dotted] (v2) ;
\draw[c-all,arrow] (v6) edge (v8) edge (v11) ;
\draw[c-all,arrow] (v8) edge (v9) edge[dotted,bend left] (v4) ;
\draw[c-all,arrow] (v9) edge[dotted] (v3) edge (v12) ;
\draw[c-all,arrow] (v10) edge (v11) edge[dotted,bend left] (v1) ;
\draw[c-all,arrow] (v11) edge (v12) edge[dotted,bend left] (v5) ;
\draw[c-all,arrow] (v12) edge[dotted,bend right] (v7) edge[dotted,bend left] (v10) ;
\end{tikzpicture}
\quad
\begin{tikzpicture}[xscale=.65,yscale=.6,
  arrow/.style={->,>=stealth,thick},
  c-all/.style={black}]
\foreach \j/\x/\y in {2/2/2,3/2/-2,4/3/3,5/3/-3,6/4/0,7/5/3,8/5/-3,9/6/2,10/6/-2}
  \draw[c-all] (\x,\y) node (v\j) {$\vertx$};
\draw[c-all] (0,0) node (v1) {$\vsource$};
\draw[c-all] (8,0) node (v11) {$\vsink$};
\path[c-all,arrow] (v1) edge (v2) edge (v3) ;
\draw[c-all,arrow] (v2) edge (v4) edge (v6) ;
\draw[c-all,arrow] (v3) edge (v6) edge (v5) ;
\draw[c-all,arrow] (v4) edge[dotted,bend right=23] (v1) edge (v7) ;
\draw[c-all,arrow] (v5) edge[dotted,bend left=23] (v1) edge (v8) ;
\draw[c-all,arrow] (v7) edge (v9) edge[dotted,bend right] (v4) ;
\draw[c-all,arrow] (v6) edge (v9) edge (v10) ;
\draw[c-all,arrow] (v8) edge (v10) edge[dotted,bend left] (v5) ;
\draw[c-all,arrow] (v9) edge[dotted,bend left=23] (v3) edge (v11) ;
\draw[c-all,arrow] (v10) edge (v11) edge[dotted,bend left=23] (v2) ;
\draw[c-all,arrow] (v11) edge[dotted, bend right=23] (v7) edge[dotted,bend left=23] (v8) ;
\end{tikzpicture}
\caption{Two cyclic completions of CY-4 exchange graphs of type $A_2$.}
\label{fig:2}
\end{figure}
\end{example}

\begin{remark}
By Theorem~\ref{thm:comparison},
each almost complete cluster tilting set in $\C{N-1}{Q}$
can be identified with a basic cycle in
$\CEG{N-1}{Q}\cong\cc{\EGp_N}(Q, \nzero)$,
which can be identified with
a basic cycle in $\cc{\EGp_N}(\qq{N}, \zero)$ by Theorem~\ref{thm:inducing}.
By Theorem~\ref{thm:main},
these basic cycles also can be interpreted as
braid group orbits of lines of $\EGp(\qq{N})$ in $\EGp(\qq{N})/\Br$.
\end{remark}

\subsection{Interpretation of coloured quivers}

As an immediate application of Theorem~\ref{thm:main}, we can use
Theorem~\ref{thm:mono1} to
interpret Buan-Thomas' coloured quiver
(or more precisely its augmented graded quiver)
for cluster tilting sets via hearts in $\D(\qq{N})$.

\begin{theorem}\label{thm:quiver}
For any heart $\h\in\EGp(\qq{N})$, the Ext quiver $\Qext{\h}$ (Definition~\ref{def:extquiv})
is equal to the augmented graded quiver $\Qaug{\cov(\h)}$ (Definition~\ref{def:augquiv})
of the corresponding cluster tilting set $\cov(\h)$,
for $\cov$ as in \eqref{eq:cov}.
\end{theorem}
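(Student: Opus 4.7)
The plan is to establish the equality first on the fundamental domain $\EGp_N(\qq{N},\zero)$ and then extend to $\EGp(\qq{N})$ by exploiting the braid group invariance of both sides.

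First, I would reduce to a heart $\h\in\EGp_N(\qq{N},\zero)$. By Theorem~\ref{thm:inducing} such an $\h$ is induced, $\h=\hua{I}_*(\nh)$, from a unique $\nh\in\EGp_N(Q,\nzero)$. On one hand, since $\hua{I}$ is a (strong) L-immersion, Proposition~\ref{pp:mono2} gives the identification of Ext-quivers
\[
  \Q{\h}=\CY{\Q{\nh}}{N}.
\]
On the other hand, the commutative diagram \eqref{eq:diagram} in the proof of Theorem~\ref{thm:main} identifies $\upsilon(\h)$ with $\hua{J}(\nh)\in\CEG{N-1}{Q}$, and the strengthened form of Proposition~\ref{pp:mono1} (i.e.\ the equality $\QR{\hua{J}(\h)}=\CY{\Q{\h}}{N}$ valid for $m=N-1$) yields
\[
  \QR{\upsilon(\h)}=\QR{\hua{J}(\nh)}=\CY{\Q{\nh}}{N}.
\]
Comparing these two expressions gives $\Q{\h}=\QR{\upsilon(\h)}$ on the fundamental domain.

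Next I would extend to an arbitrary heart $\h'\in\EGp(\qq{N})$. By Proposition~\ref{pp:br inj}, $\h'=\phi(\h)$ for some $\phi\in\Br(\qq{N})$ and some $\h\in\EGp_N(\qq{N},\zero)$. Both sides of the claimed identity are invariant under such $\phi$: the Ext-quiver is preserved because $\phi$ is a triangulated auto-equivalence, so it induces isomorphisms on every graded Hom-space between simples; and $\upsilon(\h')=\upsilon(\h)$ by the construction of $\upsilon$ via \eqref{eq:ups}, which was shown in the proof of Theorem~\ref{thm:main} to descend to $\EGp(\qq{N})/\Br$. Therefore
\[
  \Q{\h'}=\Q{\phi(\h)}=\Q{\h}=\QR{\upsilon(\h)}=\QR{\upsilon(\h')},
\]
completing the reduction.

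The delicate point, and arguably the main technical obstacle, is to make sure that under $\phi$ the identification between vertices of the two quivers (simples of $\h'$ on one side, objects of the cluster tilting set $\upsilon(\h')$ on the other) is the intended one, so that the equality of quivers holds as labelled quivers and not merely up to unspecified isomorphism. This should be handled by tracking, vertex by vertex, how the bijection $\Sim\h\leftrightarrow\Sim\h'$ induced by $\phi$ corresponds via $\upsilon$ to the identity bijection on the underlying cluster tilting set (since $\upsilon(\h')=\upsilon(\h)$ as cluster tilting sets, not merely up to mutation), which in turn follows from the construction of $\upsilon$ through silting objects in $\per(\qq{N})$ and their images in $\C{N-1}{Q}$. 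Once this labelling compatibility is checked on the fundamental domain step and shown to propagate under $\phi$, the theorem follows.
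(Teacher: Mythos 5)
Your proposal is correct and follows essentially the same route as the paper: on the fundamental domain it combines $\Q{\h}=\CY{\Q{\nh}}{N}$ (Proposition~\ref{pp:mono2}), the $m=N-1$ identity $\QR{\hua{J}(\nh)}=\CY{\Q{\nh}}{N}$, and the diagram \eqref{eq:diagram} identifying $\upsilon(\h)$ with $\hua{J}(\nh)$, then extends to all of $\EGp(\qq{N})$ using that $\Br(\qq{N})$ acts by auto-equivalences (preserving Ext-quivers) while $\upsilon$ is $\Br$-invariant. The extra labelling discussion in your last paragraph is a reasonable precaution but is already implicit in the paper's argument, since the braid-group elements act as automorphisms matching simples of $\h$ with simples of $\phi(\h)$ and $\upsilon$ is constant on orbits.
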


\begin{proof}
By Theorem~\ref{thm:inducing}, any heart $\h$ in $\EGp_N(\qq{N}, \zero)$
is induced from a heart $\nh$ in $\EGp_N(Q, \nzero)$,
i.e. $\h= \hua{I}_*(\nh)$.
Hence, combining \eqref{eq:QCY2}, \eqref{eq:QCY1+} and
\eqref{eq:diagram}, we see that
\[
 \Qext{\h}=\double{\Q{\nh}}{N}=\Qaug{\com(\nh)}=\Qaug{\cov(\h)}.
\]
But Theorem~\ref{thm:main} also tells us that $\EGp_N(\qq{N}, \zero)$
is a fundamental domain for the action (by automorphisms) of $\Br(\qq{N})$
and that $\cov$ is invariant under this action.
Hence, we deduce that the equality holds for all $\h\in\EGp(\qq{N})$.
\end{proof}

\section{Orientations of cluster exchange graphs} \label{sec:cy3}

We now consider just the case $N=3$.
Recall that, by Theorem~\ref{thm:main}, we have the following three
descriptions of the same oriented graph
\[
  \cc{\EGp_3}(\qq{3}, \zero)\cong \EGp(\qq{3})/\Br \cong \CEG{2}{Q}.
\]
In this graph, every basic cycle is a $2$-cycle and
hence we have an induced isomorphism of the oriented
graph $\EGp(\qq{3}, \zero)$ with the
usual unoriented cluster exchange graph
$\CEGun{Q}$, that is, the graph obtained from $\CEG{2}{Q}$
by replacing each basic $2$-cycle with an unoriented edge.
For example, for $Q$ of type $A_2$, Figure~\ref{fig:4} shows
$\EGp_3(\qq{3}, \zero)$ cyclically completed on the left
and the corresponding unoriented cluster exchange graph $\CEGun{Q}$
on the right.
\begin{figure}\centering
\newcommand{\vsource}{\otimes}
\newcommand{\vsink}{\odot}
\newcommand{\vertx}{\bullet}
\begin{tikzpicture}[scale=.7,
  arrow/.style={->,>=stealth,thick},
  c-all/.style={black}]
\foreach \j in {1,3,4}
   \draw[c-all] (72*\j:2cm) node (t\j) {$\vertx$};
\draw[c-all] (2*72:2cm) node (t2) {$\vsource$};
\draw[c-all] (5*72:2cm) node (t5) {$\vsink$};
\foreach \a/\b in {2/1,1/5,2/3,3/4,4/5}
  \draw[c-all] (t\a) edge[arrow]  (t\b);
\foreach \a/\b in {2/3,3/4,4/5}
  \draw[c-all] (t\b) edge[arrow,dotted,bend left]  (t\a);
\foreach \a/\b in {2/1,1/5}
  \draw[c-all] (t\b) edge[arrow,dotted,bend right]  (t\a);
\end{tikzpicture}
\qquad\quad
\begin{tikzpicture}[scale=.7,
 c-all/.style={black}]
\foreach \j in {1,...,5}
{   \draw[c-all] (\j*72:2cm) node (t\j) {$\vertx$};
    \draw[c-all] (\j*72+72:2cm) node (h\j) {$\vertx$} ;}
\foreach \j in {1,...,5}
{   \draw[c-all] (t\j) edge[thick] (h\j);  }
\end{tikzpicture}
\caption{$\cc{\EGp_3}(\qq{3}, \zero)$ and $\CEGun{Q}$ for a quiver $Q$ of type $A_2$.}
\label{fig:4}
\end{figure}
Thus $\EGp(\qq{3},\zero)$ is an oriented version of $\CEGun{Q}$.
In this section, we will see that the same holds for $\EGp(\qq{3},\h)$,
for any heart $\h\in\EGp(\qq{3})$, although the orientations will differ.

To achieve this, it is crucial that $N=3$ and we can use Lemma~\ref{pp:either-or}
rather than just Lemma~\ref{lem:parallel}.
Recall that, by Corollary~\ref{cor:all-induced}, any heart $\h\in\EGp_3(\qq{3})$
is finite and any $S\in\Sim\h$ is spherical and thus rigid.
We can use the dichotomy of Lemma~\ref{pp:either-or},
with its assumption rewritten as $\h\leq \h_0\leq \h[1]$,
to partition the interval $\EG_3(\qq{3}, \h)$ into two pieces,
given a choice of $S\in\Sim\h$:
\begin{eqnarray*}
\EG_3(\qq{3}, \h)_S^- &=& \bigl\{ \h_0\in \EG_3(\qq{3}, \h) \mid S\in \h_0 \bigr\},\\
\EG_3(\qq{3}, \h)_S^+  &=& \bigl\{ \h_0\in \EG_3(\qq{3}, \h) \mid S[1]\in \h_0 \bigr\}.
\end{eqnarray*}
and also write $\EGp_3(\qq{3}, \h)_S^\pm=\EGp_3(\qq{3}, \h) \cap \EG_3(\qq{3}, \h)_S^\pm$.

If $S$ labels an edge $\h_1\to \h_2$ in
$\EG_3(\qq{3},\h)$, then $\h_1\in\EG_3(\qq{3},\h)_S^-$
and $\h_2\in\EG_3(\qq{3},\h)_S^+$.
In fact, the converse is also true.

\begin{lemma}\label{lem:connecting}
Let $\h\in\EG(\qq{3}), S\in\Sim\h$
and $e$ be an edge between $\EG_3(\qq{3},\h)_S^-$ and
$\EG_3(\qq{3},\h)_S^+$.
Then the tail of $e$ is in $\EG_3(\qq{3},\h)_S^-$
and the label of $e$ is $S$.
\end{lemma}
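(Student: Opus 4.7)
The plan is to unpack the definition of an edge in $\EGp_3(\qq{3},\h)$: every edge is a simple forward tilt $\h_1\to\h_2=\tilt{(\h_1)}{\sharp}{T}$ for some rigid simple $T\in\Sim\h_1$, with $\hua{F}=\<T\>$ the torsion-free part. Then I would control which shifted copies of $S$ can sit in $\h_2$ by computing canonical $\h_1$-filtrations and invoking the characterisation of $\h_2$ from Proposition~\ref{pp:HRS}: an object lies in $\h_2$ iff its $\h_1$-homologies vanish outside degrees $0,1$ and satisfy $\Ho{1}\in\hua{F}$, $\Ho{0}\in\hua{T}$.

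First, I would treat the case where the tail $\h_1\in\EGp_3(\qq{3},\h)_S^-$ and the head $\h_2\in\EGp_3(\qq{3},\h)_S^+$, and show that $T=S[1]$. Since $S[1]\in\h_1$, the object $S[2]=S[1][1]$ has canonical $\h_1$-filtration with a single factor $S[1]$ at degree $1$, i.e. $\Ho{1}(S[2])=S[1]$ with respect to $\h_1$ and all other $\h_1$-homologies vanishing. Because $S[2]\in\h_2$, Proposition~\ref{pp:HRS} forces $S[1]=\Ho{1}(S[2])\in\hua{F}=\<T\>$. Rigidity of $T$ (via Lemma~\ref{lem:simpletilt}) gives $\hua{F}=\Add T$, and since $S[1]$ is indecomposable we conclude $T\cong S[1]$, as required.

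Next, I would rule out an edge from $\EGp_3(\qq{3},\h)_S^+$ to $\EGp_3(\qq{3},\h)_S^-$. In such a case we would have $S[2]\in\h_1$ and $S[1]\in\h_2$. Writing $S[1]=S[2][-1]$ gives a canonical $\h_1$-filtration of $S[1]$ with single factor $S[2]$ at degree $-1$, so $\Ho{-1}(S[1])=S[2]\neq 0$. But any object of $\h_2=\tilt{(\h_1)}{\sharp}{T}$ has $\h_1$-homology only in degrees $0$ and $1$, contradicting $S[1]\in\h_2$. Hence this orientation cannot occur, which combined with the first case gives the lemma. I do not anticipate a real obstacle here; the only point worth checking carefully is the identification $\hua{F}=\Add T$ used to force $T\cong S[1]$, and this is immediate from rigidity.
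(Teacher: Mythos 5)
Your proof is correct, and it takes a genuinely different route from the paper's. The paper first fixes the orientation by observing that $S[1]\in\hua{P}_1$ while $S[1]\notin\hua{P}_2$, so the t-structure inclusion forced by an edge must be $\hua{P}_2\subset\hua{P}_1$; it then identifies the label by contradiction: assuming $T\neq S[1]$, it produces a nonzero map $f\colon S[1]\to T$ from the torsion decomposition, uses $T\in\h[1]$ (which comes from \eqref{eq:union}, i.e.\ Proposition~\ref{pp:cy3}) together with simplicity of $T$ in $\h_1$ and of $S[1]$ in $\h[1]$ to place $M=\Cone(f)[-1]$ in both $\h$ and $\hua{P}[1]$, which is impossible. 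You instead read everything off the Happel--Reiten--Smal\o\ description of the forward tilt: the wrong orientation is excluded because $S[1]$ would have nonzero $\h_1$-homology in degree $-1$, and in the allowed orientation $\Ho{1}(S[2])=S[1]$ must lie in the torsion-free class $\hua{F}=\Add T$, so indecomposability forces $T\cong S[1]$. Your argument is more self-contained -- it needs only Proposition~\ref{pp:HRS}, the canonical filtration, and the identification $\hua{F}=\Add T$ for a rigid simple (exactly as in the proof of Proposition~\ref{pp:fini}), and never invokes Proposition~\ref{pp:cy3} or the ambient heart $\h$ beyond the definitions of the two halves -- whereas the paper's proof is shorter granted that earlier machinery and also makes explicit the morphism-level picture ($S[1]$ surjecting onto the torsion-free part). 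Both establish the lemma; the only step in your version that deserves the care you already gave it is $\hua{F}=\Add T$, which is immediate from $\Ext^1(T,T)=0$.
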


\begin{proof}
Let $\h_1\in\EG_3(\qq{3}, \h)_S^-$ and $\h_2\in\EG_3(\qq{3}, \h)_S^+$
be the vertices of $e$.
We know that one of $\hua{P}_1$, $\hua{P}_2$ is contained in the other.
But, since $S\in\h_1\subset\hua{P}_1$ and $S\in\h_2[-1]\subset\hua{P}_2^\perp$,
we must have $\hua{P}_1\supset\hua{P}_2$, i.e. $\h_1$ is the tail of $e$.

Suppose $T$ labels $e$, so that $T\in\h_1$ and $T[1]\in\h_2$.
Then, since $\h\leq \h_i \leq \h[1]$, for $i=1,2$,
Lemma~\ref{pp:either-or} implies that $T$ is in $\h$ or $\h[1]$ and
also that $T[1]$ is in $\h$ or $\h[1]$.
Hence it must be that $T\in\h$.

Now let $\torpr{\hua{F}}{\hua{T}}$ \New{be} the torsion pair in $\h_1$ corresponding to $e$.
Noticing that $\hua{T}\subset\h_2$, but $S\notin\h_2$,
we have $S\notin\hua{T}$, which implies there is a nonzero map
$f\colon S \to T$.
Let $M=\Cone(f)[-1]$.
Since $T$ is simple in $\h_1$ and $S\in \h_1$,
we have $M\in\h_1\subset\hua{P}_1\subset\hua{P}$.
On the other hand, since $S$ is simple in $\h$ and $T\in\h$,
we have $M[1]\in\h\subset\hua{P}^\perp[1]$,
so $M\in\hua{P}^\perp$.
Hence $M=0$ and so $S\cong T$, as required.
\end{proof}

We can now describe how forward tilting the base heart transforms
the based exchange graphs.
There is an obvious modification for backwards tilting.

\begin{proposition}\label{pp:EG}
For any heart $\h\in\EGp(\qq{3})$ and $S\in\Sim\h$,
the exchange graph
$\EGp_3(\qq{3},\tilt{\h}{\sharp}{S})$ can be obtained from $\EGp_3(\qq{3},\h)$
by applying a `half-twist', that is,
applying $\phi_{S}^{-1}$ to $\EGp_3(\qq{3}, \h)_S^-$
and reversing all the connecting edges (labelled by $S$) in $\EGp_3(\qq{3},\h)$,
while relabelling them by $S[1]$.
\end{proposition}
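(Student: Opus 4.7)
The plan is to establish the claimed half-twist isomorphism of oriented graphs in two stages: first as a bijection of vertex sets, then as a matching of edges. Write $\h'=\tilt{\h}{\sharp}{S}$ and use the partition $\EGp_3(\qq{3},\h)=\EGp_3(\qq{3},\h)_S^- \sqcup \EGp_3(\qq{3},\h)_S^+$ together with the analogous partition of $\EGp_3(\qq{3},\h')$ with respect to the simple $S[1]\in\Sim\h'$.

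For the vertex sets, I would first show $\EGp_3(\qq{3},\h)_S^+ = \EGp_3(\qq{3},\h')_{S[1]}^-$ by unpacking both definitions via Proposition~\ref{pp:cy3}: each side equals the interval $\{\h_0 : \h'[1]\leq\h_0\leq\h[2]\}$, using that $\tilt{\h'}{\flat}{S[1]}=\h$. Next I would show $\phi_S^{-1}(\EGp_3(\qq{3},\h)_S^-) = \EGp_3(\qq{3},\h')_{S[1]}^+$. Here the key facts are that $\phi_S^{-1}$ is an autoequivalence (so preserves the partial order) and that \eqref{eq:root} gives $\phi_S^{-1}(\h)=\tilt{\h}{2\sharp}{S}=\tilt{\h'}{\sharp}{S[1]}$, from which one deduces by a short functorial argument that $\phi_S^{-1}$ shifts positions in the line $l(\h,S)$ by $N-1=2$, and in particular $\phi_S^{-1}(\tilt{\h}{\flat}{S})=\h'$. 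Applying $\phi_S^{-1}$ to the interval $\EGp_3(\qq{3},\h)_S^- = \{\h_0 : \h[1]\leq\h_0\leq\tilt{\h}{\flat}{S}[2]\}$ then yields the interval $\{\h_1 : \phi_S^{-1}(\h)[1]\leq\h_1\leq\h'[2]\}$, which equals $\EGp_3(\qq{3},\h')_{S[1]}^+$ by Proposition~\ref{pp:cy3} applied to $\h'$ and $S[1]$.

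For the edges, I would treat three cases. Edges within the common set $\EGp_3(\qq{3},\h)_S^+ = \EGp_3(\qq{3},\h')_{S[1]}^-$ coincide in both graphs, since each base interval is a full subgraph of $\EG(\qq{3})$. Edges within $\EGp_3(\qq{3},\h)_S^-$ correspond, via the autoequivalence $\phi_S^{-1}$, to edges within $\EGp_3(\qq{3},\h')_{S[1]}^+$, with labels transformed accordingly. The crucial case is the connecting edges: by Lemma~\ref{lem:connecting} every edge between the two parts of $\EGp_3(\qq{3},\h)$ has the form $\h_1\xrightarrow{S[1]}\h_2$, and every edge between the two parts of $\EGp_3(\qq{3},\h')$ has label $S[1][1]=S[2]$. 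For such an edge we have $\h_2=\tilt{\h_1}{\sharp}{S[1]}$, so tilting $\h_2$ at its simple $S[2]$ produces $\tilt{\h_1}{2\sharp}{S[1]}$, which by \eqref{eq:root} equals $\phi_{S[1]}^{-1}(\h_1)=\phi_S^{-1}(\h_1)$; the identity $\phi_{S[k]}=\phi_S$ for every $k$ is a direct manipulation of \eqref{eq:sphtwist-}, using the canonical isomorphism $S[1]\otimes V[-1]\cong S\otimes V$. Hence the original edge $\h_1\to\h_2$ with label $S[1]$ corresponds to the reversed and relabelled edge $\h_2\to\phi_S^{-1}(\h_1)$ with label $S[2]$ in $\EGp_3(\qq{3},\h')$, exactly as stated.

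The main obstacle is the careful matching of partial-order intervals across the two base hearts and tracking how $\phi_S^{-1}$ translates positions along $l(\h,S)$. Once the two bookkeeping identities $\phi_S^{-1}(\tilt{\h}{\flat}{S})=\h'$ and $\phi_{S[k]}=\phi_S$ are in hand, each of the above verifications reduces to a short direct application of Proposition~\ref{pp:cy3} and Lemma~\ref{lem:connecting}, and the resulting vertex-and-edge bijection is precisely the half-twist described in the statement.
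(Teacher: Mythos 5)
Your proposal is correct and follows essentially the same route as the paper: the same decomposition via Proposition~\ref{pp:cy3}, the identity \eqref{eq:root} (with $\phi_{S[k]}=\phi_S$) to see that $\phi_S^{-1}$ carries $\EGp_3(\qq{3},\h)_S^-$ onto $\EGp_3(\qq{3},\tilt{\h}{\sharp}{S})_{S[1]}^+$, and Lemma~\ref{lem:connecting} to handle the connecting edges, which become the reversed edges labelled $S[2]$. The paper's proof is merely a terser version of the same argument, so no substantive difference to report.
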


\begin{proof}
First observe, by Lemma~\ref{pp:either-or}, that if $S[1]\in\h_0$, then
\[
   \h \leq \h_0 \leq \h[1]
   \iff
   \tilt{\h}{\sharp}{S} \leq \h_0 \leq \tilt{\h}{\sharp}{S}[1].
\]
On the other hand, if $S\in\h_0$ (or equivalently $S[2]\in \phi_S^{-1}(\h_0)$), then
\[
   \h \leq \h_0 \leq \h[1]
   \iff
   \tilt{\h}{\flat}{S} \leq \h_0 \leq \tilt{\h}{\flat}{S}[1]
   \iff
   \tilt{\h}{\sharp}{S} \leq \phi_S^{-1}(\h_0) \leq \tilt{\h}{\sharp}{S}[1],
\]
since $\phi_S^{-1}(\tilt{\h}{\flat}{S})=\tilt{\h}{\sharp}{S}$, by \eqref{eq:root}.
Thus we have
\begin{eqnarray}\label{eq:+-1}
   \EG_3(\qq{3}, \h)_S^+ = \EG_3(\qq{3}, \tilt{\h}{\sharp}{S})_{S[1]}^- \\\label{eq:+-2}
   \phi_S^{-1}\left(\EG_3(\qq{3}, \h)_S^-\right) = \EG_3(\qq{3},\tilt{\h}{\sharp}{S})_{S[1]}^+
\end{eqnarray}
These identifications automatically preserve edges lying in one part of the partition,
while, by Lemma~\ref{lem:connecting}, we know that
any edge connecting $\EG_3(\qq{3},\h)_S^\pm$ is labelled by $S$ and
any edge connecting $\EG_3(\qq{3},\tilt{\h}{\sharp}{S})_{S[1]}^\pm$
is labelled by $S[1]$.
Furthermore,
when $\h_1\in\EG_3(\qq{3}, \h)_S^-$ and $\h_2\in\EG_3(\qq{3}, \h)_S^+$,
we have $\h_2=\tilt{(\h_1)}{\sharp}{S}$
if and only if
$\phi_S^{-1}(\h_1)=\tilt{(\h_2)}{\sharp}{S[1]}$.
Thus we have a `half-twist' isomorphism of \emph{undirected} graphs between
$\EG_3(\qq{3},\h)$ and $\EG_3(\qq{3},\tilt{\h}{\sharp}{S})$,
as described.

It remains to prove that this restricts to a similar isomorphism between
their principal components, i.e.
\begin{align*}
    \EGp_3(\qq{3}, \h)_S^+
 &= \EGp_3(\qq{3}, \tilt{\h}{\sharp}{S})_{S[1]}^-
\\  \phi_S^{-1}\left(\EGp_3(\qq{3}, \h)_S^-\right)
 &= \EGp_3(\qq{3},\tilt{\h}{\sharp}{S})_{S[1]}^+
\end{align*}
For the first equation, suppose that $\h'\in\EGp_3(\qq{3}, \h)_S^+$.
Then the half twist isomorphism on intervals,
as described above, 
turns any path from $\h$ to $\h'$ in $\EG_3(\qq{3},\h)$
into a path from $\phi_S^{-1}(\h)$ to $\h'$ in $\EG_3(\qq{3},\h_S^\sharp)$,
and vice versa.
But there is also an edge
 $\tilt{\h}{\sharp}{S}\xrightarrow{S[1]} \phi_S^{-1}(\h)$,
which implies that $\h'\in\EGp_3(\qq{3}, \tilt{\h}{\sharp}{S})_{S[1]}^-$
if and only if $\h'\in \EGp_3(\qq{3}, \h)_S^+$, as required.
If, on the other hand, $\h'\in\EGp_3(\qq{3}, \h)_S^-$, then
the half twist turns a path from $\h$ to $\h'$ into
into a path from $\phi_S^{-1}(\h)$ to $\phi_S^{-1}(\h')$
and the second equation follows in the same way.
\end{proof}

\begin{corollary}\label{cor:sink-source}
For any $\h\in \EGp(\qq{3})$, the subgraph
$\EGp_3(\qq{3}, \h)$ has a unique source $\h$ and a unique sink $\h[1]$.
\end{corollary}

\begin{proof}
For any $\h'\in\EGp_3(\qq{3}, \h)$ and any simple $S\in\Sim\h'$,
we know from Lemma~\ref{pp:either-or} that
precisely one $S$-tilt of $\h'$ remains in $\EGp_3(\qq{3}, \h)$:
the forward tilt if $S\in\h$ or the backward tilt if $S\in\h[1]$.
If $\h'$ is a source, then it must be the forward tilt for every $S\in\Sim\h'$,
and so $\Sim\h'\subset\h$, and hence $\h'\subset\h$, which implies $\h'=\h$.
Thus $\h$ is the unique source.
The uniqueness of the sink follows similarly: if there is one, then all its simples must be in
$\h[1]$, so it must be $\h[1]$.

It remains to prove that we do actually have $\h[1]\in\EGp_3(\qq{3},\h)$.
We do this by induction, starting from $\h=\zero$,
where we know the result by Remark~\ref{rem:induced}.
Thus we want to show that $\h[1]\in\EGp_3(\qq{3}, \h)$ implies
$\tilt{\h}{\sharp}{S}[1]\in \EGp_3(\qq{3}, \tilt{\h}{\sharp}{S})$,
for every $S\in\Sim\h$, and likewise for $\tilt{\h}{\flat}{S}$,
although the argument is similar so we omit it.

But observe, by \eqref{eq:root}, that
$\tilt{\h}{\sharp}{S}[1]=\phi_S^{-1}(\tilt{\h}{\flat}{S}[1])$
and that $\tilt{\h}{\flat}{S}[1]=\tilt{(\h[1])}{\flat}{S[1]}$,
which we know is in $\EGp_3(\qq{3}, \h)$ by the inductive hypothesis
and Lemma~\ref{pp:either-or}, as $S[1]\in\h[1]$.
Furthermore, since $\tilt{\h}{\flat}{S}[1]$ contains $S$, it is in $\EGp_3(\qq{3}, \h)_S^-$
and so Proposition~\ref{pp:EG} implies that
$\tilt{\h}{\sharp}{S}[1]\in\EGp_3(\qq{3}, \tilt{\h}{\sharp}{S})^+_{S[1]}$, as required.
\end{proof}

\begin{example}\label{ex3}
For $Q$ the $A_3$-type quiver of Example~\ref{ex:flaw},
choose $\h=\zero$ and $S=\hua{I}(Y_1)$.
In Figure~\ref{fig:main}, the two parts of the two based exchange graphs are
separated by the dotted line: the left/right parts
of the top graph are $\EGp_3(\qq{3}, \h)^{-/+}_S$,
while the left/right parts of the bottom graph are
$\EGp_3(\qq{3}, \tilt{\h}{\sharp}{S})^{+/-}_{S[1]}$.
Moreover, the arrows that cross the dotted line
are labelled $S$ in the top graph and $S[1]$ in the bottom graph.
The vertices $\otimes$ and $\odot$ are the
unique source and sink in the graphs.
\end{example}

\begin{figure}[t]\centering
\begin{tikzpicture}[scale=0.8,
  arrow/.style={->,>=stealth,thick},
  equalto/.style={double,double distance=2pt},
  mapto/.style={|->},
  c-before/.style={cyan},
  c-after/.style={red!15!blue!40!green}, 
  c-cross/.style={red},
  c-div/.style={orange},
  c-fixed/.style={black}]
\newcommand{\vsource}{\otimes}
\newcommand{\vsink}{\odot}
\newcommand{\vertx}{\bullet}
\foreach \n/\a\b in {1/1/6, 2/1/4, 3/2/7, 4/2/5, 5/3.25/5, 6/4/7.75,
  7/4/6.5, 8/4/3.5, 9/4/2, 10/4.75/5, 11/6/7, 12/6/5, 13/7/6, 14/7/3.75}
 \coordinate (X\n) at (\a,\b);
\draw[c-before] (X3) node (x3) {$\vsource$};
\draw[c-fixed] (X14) node (x14) {$\vsink$};
\foreach \n in {1,2,4,5,8,9}
  \draw[c-before] (X\n) node (x\n) {$\vertx$};
\foreach \n/\x/\y in {6,7,10,11,12,13}
  \draw[c-fixed] (X\n) node (x\n) {$\vertx$};
\draw[c-cross] (x1) edge[arrow,dashed] (x13);
\foreach \t/\h in {3/6, 5/7, 8/10, 9/14}
  \draw[c-cross] (x\t) edge[arrow] (x\h);
\foreach \t/\h in {1/2, 2/9, 3/1, 3/4, 4/5, 4/2, 5/8, 8/9}
  \draw[c-before] (x\t) edge[arrow] (x\h);
\foreach \t/\h in {6/7, 6/11, 7/10, 10/12, 11/12, 11/13, 12/14, 13/14}
  \draw[c-fixed] (x\t) edge[arrow] (x\h);
\draw[c-fixed] (X6)++(3,-6) node (y6) {$\vsource$};
\draw[c-after] (X9)++(3,-6) node (y9) {$\vsink$};
\foreach \n in {1,2,3,4,5,8}
  \draw[c-after] (X\n)++(3,-6) node (y\n) {$\vertx$};
\foreach \n in {7,10,11,12,13,14}
  \draw[c-fixed] (X\n)++(3,-6) node (y\n) {$\vertx$};
\draw[c-cross] (y13) edge[arrow,dashed] (y1); 
\foreach \t/\h in {3/6, 5/7, 8/10, 9/14}
  \draw[c-cross] (y\h) edge[arrow] (y\t); 
\foreach \t/\h in {1/2, 2/9, 3/1, 3/4, 4/5, 4/2, 5/8, 8/9}
  \draw[c-after] (y\t) edge[arrow] (y\h);
\foreach \t/\h in {6/7, 6/11, 7/10, 10/12, 11/12, 11/13, 12/14, 13/14}
  \draw[c-fixed] (y\t) edge[arrow] (y\h);
\draw[c-div] (2.5,8) edge[dotted,thick] (8.5,-4);
\draw (9.2, 4.8) node (R1) {$\EGp_3(\qq{3},\h)_S^+$};
\draw (11, 2) node (R2) {$\EGp_3(\qq{3},\tilt{\h}{\sharp}{S})_{S[1]}^-$};
\draw (0.4, 1.8) node (L1) {$\EGp_3(\qq{3},\h)_S^-$};
\draw (2, -1) node (L2) {$\EGp_3(\qq{3},\tilt{\h}{\sharp}{S})_{S[1]}^+\quad $};
\draw (R1) edge[equalto] (R2);
\draw (L1) edge[mapto] node[above right] {\small $\phi^{-1}_S$} (L2);
\end{tikzpicture}
\caption{Half twist of $\EGp_3(\qq{3},\h)$ for an $A_3$-type quiver $Q$.}
\label{fig:main}
\end{figure}

Applying Proposition~\ref{pp:EG} inductively, starting from
Theorem~\ref{thm:main} applied to the canonical heart  $\zero$,
we obtain the following result.

\begin{theorem}\label{thm:egx}
For any heart $\h\in\EGp(\qq{3})$,
we have $\cc{\EGp_3}(\qq{3}, \h)\cong\CEG{2}{Q}$,
or equivalently,
$\EGp_3(\qq{3},\h)$
induces an orientation of the (unoriented) cluster exchange graph $\CEGun{Q}$.
\end{theorem}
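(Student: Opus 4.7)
The plan is to induct on the distance in $\EGp(\qq{3})$ from the standard heart $\zero$, with Theorem~\ref{thm:main} specialised to $N=3$ supplying the base case $\cc{\EGp_3}(\qq{3},\zero)\cong\CEG{2}{Q}$. Since $\EGp(\qq{3})$ is by construction the connected component of $\EG(\qq{3})$ containing $\zero$, every heart $\h\in\EGp(\qq{3})$ is reachable from $\zero$ by a finite chain of simple forward or backward tilts, so I only need to verify that the isomorphism class of $\cc{\EGp_3}(\qq{3},\h)$ is preserved when $\h$ is replaced by $\tilt{\h}{\sharp}{S}$ or $\tilt{\h}{\flat}{S}$ for any $S\in\Sim\h$.

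For the inductive step with $\h'=\tilt{\h}{\sharp}{S}$, I would invoke Proposition~\ref{pp:EG}, which realises $\EGp_3(\qq{3},\h')$ by applying the autoequivalence $\phi_S^{-1}$ to the subset $\EGp_3(\qq{3},\h)_S^-$ and then reversing and relabelling each $S[1]$-edge of $\EGp_3(\qq{3},\h)$. Both operations leave the underlying unoriented graph unchanged: the first is a bijection on vertices and edges induced by a category autoequivalence, and the second merely flips the orientation of certain edges. Thus $\EGp_3(\qq{3},\h)$ and $\EGp_3(\qq{3},\h')$ are isomorphic as unoriented graphs, and the backward-tilting case is handled by the analogous modification noted immediately above Proposition~\ref{pp:EG}.

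Next, to promote this unoriented isomorphism to an isomorphism of cyclic completions, I would observe that every maximal line segment in $\EGp_3(\qq{3},\h_b)$, for any base $\h_b\in\EGp(\qq{3})$, has length exactly $2$. The upper bound $N-1=2$ is Lemma~\ref{lem:parallel}, and the lower bound is immediate because each oriented edge of $\EGp_3(\qq{3},\h_b)$ already realises a two-vertex segment whose endpoints both lie in the interval. Therefore the cyclic completion simply doubles every oriented edge into a $2$-cycle, and $\cc{\EGp_3}(\qq{3},\h_b)$ is determined as an oriented graph by the underlying unoriented graph of $\EGp_3(\qq{3},\h_b)$. Combining this with the previous paragraph gives
\[
  \cc{\EGp_3}(\qq{3},\h')\cong\cc{\EGp_3}(\qq{3},\h)\cong\CEG{2}{Q},
\]
which closes the induction.

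The hard part will be justifying the length-$2$ statement for an arbitrary base heart $\h_b$, and in particular ruling out maximal segments of length $1$ that would otherwise contribute spurious self-loops to the completion. For this, I would rely on Proposition~\ref{pp:cy3} together with Lemma~\ref{lem:connecting}: the former pins down, for each $S\in\Sim\h$ with $\h\in\EGp_3(\qq{3},\h_b)$, on which side of the partition $\EGp_3(\qq{3},\h_b)=\EGp_3(\qq{3},\h_b)_S^-\sqcup\EGp_3(\qq{3},\h_b)_S^+$ the adjacent hearts lie, while the latter identifies the $S[1]$-edges as precisely those edges crossing this partition, so that forward and backward simple tilts with respect to shifts of $S$ always pair up into length-$2$ line segments within $\EGp_3(\qq{3},\h_b)$.
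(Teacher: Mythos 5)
Your proposal is correct and follows essentially the same route as the paper: an induction along tilts of the base heart, starting from Theorem~\ref{thm:main} for $\zero$ and using the half-twist description of Proposition~\ref{pp:EG} (built on Proposition~\ref{pp:cy3} and Lemma~\ref{lem:connecting}) to see that each step preserves the underlying graph and hence the cyclic completion. The only difference is that you spell out explicitly why the cyclic completion is just the edge-doubling (maximal line segments have length exactly $2$, by Lemma~\ref{lem:parallel} for the upper bound and Proposition~\ref{pp:cy3} for the lower), a point the paper's one-line proof leaves implicit.
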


In particular, Theorem~\ref{thm:egx} says that
$\EGp_3(\qq{3}, \h)$ is a fundamental domain for $\EGp(\qq{3})/\Br_3$,
for every $\h\in\EGp(\qq{3})$,
while in \CY{N} case, this is only proved (and most likely only true)
for the canonical heart $\zero$ as base (cf. Theorem~\ref{thm:main}).

\begin{remark}\label{rem:gluing}
Proposition~\ref{pp:EG} describes precisely
how the subgraphs $\EGp_3(\qq{3},\h)$,
each of which is an orientation of
the cluster exchange graph $\CEGun{Q}$ by Theorem~\ref{thm:egx},
glue together to form $\EGp(\qq{3})$.
\end{remark}

\section{Construction of A2-type exchange graph via the Farey graph}

In this section, we consider a quiver $Q$ of type $A_2$ and, by abuse of notation,
we write $\Gamma_N A_2$ for $\qq{N}$ and allow any $N\geq 2$.
We will demonstrate a roughly dual relationship between the quotient graph
$\EGp(\qq{3})/[1]$ and the Farey graph $\FG$, in its natural embedding
in the hyperbolic disc.
This graph has vertices the rational points on the boundary of the disc
\[
  \FG_0=\QQ\cup\{\infty\},
\]
with an edge from $p/q$ to $r/s$ if and only if $|ps-rq|=1$.
By convention here $\infty=1/0$. These edges, as hyperbolic geodesics,
give a triangulation of the disc.

The Farey graph arises in a variety of contexts;
for example, it is the curve complex of a (once-punctured) torus,
whose vertices are homotopy classes of simple closed curves
and whose edges join curves with intersection number one.
More directly relevant here, $\FG_0$ can be identified with
a conjugacy class of parabolic elements in $\PSL{2}{\ZZ}$
of the form
\begin{equation}\label{eq:defPsi}
  \Psi_{p/q}=\left(\begin{matrix} 1+pq & -p^2 \\ q^2 & 1-pq\end{matrix}\right).
\end{equation}
Note that the natural action of $\PSL{2}{\ZZ}$ on the hyperbolic disc preserves
$\FG$ and furthermore $\Psi_{p/q}$ fixes $p/q$.

\subsection{Spherical twists and vertices in FG}

Denote by $\Sph(\Gamma_N A_2)$ the set of
all spherical objects which are simples in some hearts in $\EGp(\Gamma_N A_2)$ and
\[
   \twists(\Gamma_N A_2)=\{\phi_S\mid S\in\Sph(\Gamma_N A_2)\}\subset\Br_3.
\]
Since $\phi_S=\phi_{S[1]}$, there is a surjective map
\begin{equation}\label{eq:Phi}
    \Phi:\Sph(\Gamma_N A_2)/[1] \to \twists(\Gamma_N A_2).
\end{equation}
Moreover, suppose $\phi_S=\phi_T$ for some $S,T\in\Sph(\Gamma_N A_2)$.
Then $\phi_T(S)=\phi_S(S)=S[1-N]$.
Since there is no non-zero map from $S$ to $S[1-N]$, we must have
\[
    T\otimes\Hom^\bullet(T,S)=S\oplus S[-N],
\]
which implies $T=S[m]$ for some integer $m$.
Thus $\Phi$ in \eqref{eq:Phi} is in fact an bijection.

Let $\Sim\zero=\{X_0,X_\infty\}$ with $\Ext^1(X_0,X_\infty)\neq0$.
Then $\{\phi_{X_0}, \phi_{X_\infty}\}$ is a generating set of $\Br_3$.
By \eqref{eq:ST}, we inductively deduce that
\[
    \twists(\Gamma_N A_2)=\{ \phi\circ\phi_{X_0}\circ\phi^{-1} \mid \phi\in\Br_3\},
\]
that is, $\twists(\Gamma_N A_2)$ is the conjugacy class of one of the generators of the braid group.
It is well-known that $\Br_3$ is a central extension
\[
    0 \to \ZZ \to \Br_3 \xrightarrow{p} \PSL{2}{\ZZ} \to 0,
\]
where the central generator is a braid of non-zero `length' (i.e. number of positive minus number of negative crossings).
Hence the map $p$ is injective restricted to the conjugacy class
$\twists(\Gamma_N A_2)$ and we can identify $\twists(\Gamma_N A_2)$ with its image,
which is $\{\Psi_{a}\mid a\in\FG_0\}$. Indeed we can arrange that
$p(\phi_{X_a})=\Psi_a$ for $a=0,1,\infty$, where $X_1=\phi^{-1}_{X_0}(X_\infty)$,
and thereby obtain a bijection
\[
    \chi=(p\circ \Phi)^{-1}\circ\Psi: \FG_0  \xrightarrow{\sim}  \Sph(\Gamma_N A_2)/[1],
\]
which satisfies
\[
    \chi(\Psi_a(b))=\phi^{-1}_{\chi(a)}(\chi(b)),
\]
for any $a,b\in\FG_0$.
\New{In other words, for each $a\in\FG_0$, the spherical twist $\phi^{-1}_{\chi(a)}$ acts on
$\Sph(\Gamma_N A_2)/[1]$ in the same way that
the parabolic element $\Psi_a$ acts on $\FG_0$, i.e. via \eqref{eq:defPsi}.}
\subsection{L-immersions and triangles in FG}\label{sec:L}

By the action of the $\PSL{2}{\ZZ}$ symmetry,
the properties of the triangle $\Tri=(\infty,1,0)$ in $\FG$
can be extended to any clockwise triangle $\Tri=(a,b,c)$ in $\FG$.
In particular, for each such triangle, we have the following:
\StartEnum
\item There is a triangle
\begin{equation}\label{eq:tri}
    X_a\to X_b \to X_c \to X_a[1]
\end{equation}
in $\D(\Gamma_N A_2)$ such that
$X_{j}$ is in the shift orbit $\chi(j)$ for $j=a,b,c$ satisfying
\[
    X_b={\phi^{-1}_{X_c}(X_a)},\quad X_c={\phi^{-1}_{X_a}(X_b)},\quad
    X_a[1]={\phi^{-1}_{X_b}(X_c)}.
\]
\item
There is an L-immersion $\imm_\Tri\colon\D(A_2)\to\D(\Gamma_N A_2)$,
unique up to the action of $\Aut\D(A_2)$, determined by
\[
    \imm_\Tri(\Ind \D(A_2))=
\shifts{X_a}\cup\shifts{X_b}\cup\shifts{X_c},
\]
where $\shifts{X_j}$ means $\{X_j[m]\}_{m\in\ZZ}$.
\item Up to shift, there are $3(N-1)$ hearts
induced by $\imm_\Tri$, given as follows
\begin{equation}\label{eq:hearts}
    \h^{ca}_{j}=\<X_a,X_c[j-1]\>,\quad
    \h^{ab}_{j}=\<X_b,X_a[j]\>,\quad
    \h^{bc}_{j}=\<X_c,X_b[j]\>.
\end{equation}
where $j=1,\ldots,N-1$.
\item
In \eqref{eq:hearts},
only the three hearts $\h^*_1$ are induced
by $\imm_\Tri$ from canonical hearts in $\D(A_2)$.
Their images in $\EG(\Gamma_N A_2)/[1]$ form a three-cycle $T_\Tri$.
Moreover, the images of hearts $\h^*_{j-1}$ and $\h^*_{j}$
in $\EG(\Gamma_N A_2)/[1]$ form a two-cycle $C_{*,j}$, for $j=2,\ldots,N-1$.
\item
If two triangles share an edge $(c,a)$, then the corresponding induced
hearts $\h^{ca}_{j}$ and $\h^{ac}_{N-j}$ coincide (up to shift).
N.B. the objects $X_a$ and $X_c$ for the two triangles
will also differ by shifts.
\StopEnum

Moreover, by iterated tilting from $\zero$, we can obtain every heart
$\h\in\EGp(\Gamma_N A_2)$ as (a shift of)
one of the hearts in \eqref{eq:hearts}.
In particular, if $\Sim\h=\{A,C\}$, then $(\shifts{A},\shifts{C})$
corresponds an edge in $\FG$.

Thus we can naturally draw the (oriented)
quotient graph
$
  \hua{G}_N=\EGp(\Gamma_N A_2)/[1]
$
on top of the Farey graph, as illustrated in Figure~\ref{fig:G3}
in the case $N=3$ and in Figure~\ref{fig:G24} in the cases $N=2,4$.
It consists of the three-cycles $T_\Tri$,
for each triangle $\Tri$ of $\FG$, joined by a chain of
$N-2$ two-cycles $C_{\Lambda,j}$, for each edge $\Lambda$ of $\FG$,
and thus $\hua{G}_N$ is `roughly' dual to $\FG$.


\begin{figure}\centering
\begin{tikzpicture}[scale=0.8,
 arrow/.style={->,>=stealth,thick,blue}, 
 border/.style={Periwinkle,dotted,thick},
 c-vrtx/.style={blue},
 c-arc/.style={Emerald}]
\newcommand{\vrtx}{\bullet}
\coordinate (O) at (0,0);
\coordinate (S1) at (0,6) ;
\draw [border] (O) circle (6cm);
\draw (-30:6cm) node[below right] {$0$}
    (90:6cm) node[above] {$1$}
    (-90:6cm) node[below] {$-1$}
    (210:6cm) node[below left] {$\infty$};
\draw[c-arc,thick] (S1) \foreach \j in {1,...,3}
    {arc(360/3-\j*360/3+180:360-\j*360/3:10.3923cm)}--cycle;
\draw[c-arc,semithick] (S1) \foreach \j in {1,...,6}
    {arc(360/6-\j*360/6+180:360-\j*360/6:3.4641cm)}--cycle;
\draw[c-arc] (S1) \foreach \j in {1,...,12}
    {arc(360/12-\j*360/12+180:360-\j*360/12:1.6077cm)}--cycle;
\foreach \j in {1,...,3}
{\draw[c-vrtx] (-90+120*\j:0.7cm) node (v\j) {$\vrtx$};
 \draw[c-vrtx] (-210+120*\j:0.7cm) node (w\j) {$\vrtx$};
 \draw[c-vrtx] (-90+120*\j:2.2cm) node (a\j) {$\vrtx$};
 \draw[c-vrtx] (-90+15+120*\j:3cm) node (b\j) {$\vrtx$};
 \draw[c-vrtx] (-90-15+120*\j:3cm) node (c\j) {$\vrtx$};}
\foreach \j in {1,...,3}
{\draw [arrow] (v\j) edge[bend left] (w\j);
 \draw [arrow] (a\j) edge[bend left] (b\j);
 \draw [arrow] (b\j) edge[bend left] (c\j);
 \draw [arrow] (c\j) edge[bend left] (a\j);}
\foreach \j in {1,...,3}
{\draw[c-vrtx] (60*\j*2-1-60:3.9cm) node (x1\j) {$\vrtx$};
 \draw[c-vrtx] (60*\j*2+1-120:3.9cm) node (x2\j) {$\vrtx$};}
\foreach \j in {1,...,3}
{\draw [arrow]  (v\j) edge[bend left] (a\j);
 \draw [arrow]  (a\j) edge[bend left] (v\j);
 \draw [arrow]  (b\j) edge[bend left] (x1\j);
 \draw [arrow]  (c\j) edge[bend left] (x2\j);
 \draw [arrow]  (x1\j) edge[bend left] (b\j);
 \draw [arrow]  (x2\j) edge[bend left] (c\j);}
\end{tikzpicture}
\caption{The Farey graph $\FG$ with `dual' quotient graph $\hua{G}_3$.}
\label{fig:G3}
\end{figure}

\begin{figure}\centering
\begin{tikzpicture}[scale=0.5,
 arrow/.style={->,>=stealth,thick,blue}, 
 border/.style={Periwinkle,dotted,thick},
 c-vrtx/.style={blue},
 c-arc/.style={Emerald}]
\newcommand{\vrtx}{\bullet}
\coordinate (O) at (0,0);
\coordinate (S1) at (0,6) ;
\draw [border] (O) circle (6cm);
\draw (S1)[c-arc,thick] \foreach \j in {1,...,3}
    {arc(360/3-\j*360/3+180:360-\j*360/3:10.3923cm)}--cycle;
\draw (S1)[c-arc,semithick] \foreach \j in {1,...,6}
    {arc(360/6-\j*360/6+180:360-\j*360/6:3.4641cm)}--cycle;
\foreach \j in {1,...,3}
{\draw[c-vrtx] (-90+120*\j:1.6cm) node {$\vrtx$};
 \draw[c-vrtx] (-210+120*\j:1.6cm) node {$\vrtx$};
 \draw[c-vrtx] (-90+30+120*\j:3.4641cm) node {$\vrtx$};
 \draw[c-vrtx] (-90-30+120*\j:3.4641cm) node {$\vrtx$};}
\foreach \j in {1,...,3}
{\draw[c-vrtx] (-90+120*\j:1.6cm) node (v\j) {};
 \draw[c-vrtx] (-210+120*\j:1.6cm) node (w\j) {};
 \draw[c-vrtx] (-90+30+120*\j:3.4641cm) node (b\j) {};
 \draw[c-vrtx] (-90-30+120*\j:3.4641cm) node (c\j) {};}
\foreach \j in {1,...,3}
{\draw [arrow] (v\j) edge[bend left=15] (w\j);
 \draw [arrow] (v\j) edge[bend left=15] (b\j);
 \draw [arrow] (b\j) edge[bend left=25] (c\j);
 \draw [arrow] (c\j) edge[bend left=15] (v\j);}
\end{tikzpicture}
\;
\begin{tikzpicture}[scale=0.5,
 arrow/.style={-,thick,blue}, 
 border/.style={Periwinkle,dotted,thick},
 c-vrtx/.style={blue},
 c-arc/.style={Emerald}]
\newcommand{\vrtx}{\bullet}
\coordinate (O) at (0,0);
\coordinate (S1) at (0,6) ;
\draw [border] (O) circle (6cm);
\draw[c-arc,thick] (S1) \foreach \j in {1,...,3}
    {arc(360/3-\j*360/3+180:360-\j*360/3:10.3923cm)}--cycle;
\draw[c-arc,semithick] (S1) \foreach \j in {1,...,6}
    {arc(360/6-\j*360/6+180:360-\j*360/6:3.4641cm)}--cycle;
 \foreach \j in {1,...,3}
{\draw[c-vrtx] (-90+120*\j:.6cm) node {$\vrtx$};
\draw[c-vrtx] (-210+120*\j:.6cm) node {$\vrtx$};
 \draw[c-vrtx] (-90+120*\j:1.6cm) node {$\vrtx$};
 \draw[c-vrtx] (-90+120*\j:2.4cm) node {$\vrtx$};
 \draw[c-vrtx] (-90+17+120*\j:3cm) node{$\vrtx$};
 \draw[c-vrtx] (-90-17+120*\j:3cm) node{$\vrtx$};}
  \foreach \j in {1,...,3}
{\draw[c-vrtx] (-90+120*\j:.6cm) node (v\j) {};
 \draw[c-vrtx] (-210+120*\j:.6cm) node (w\j){};
 \draw[c-vrtx] (-90+120*\j:1.6cm) node (x\j){};
 \draw[c-vrtx] (-90+120*\j:2.4cm) node (a\j){};
 \draw[c-vrtx] (-90+17+120*\j:3cm) node (b\j){};
 \draw[c-vrtx] (-90-17+120*\j:3cm) node (c\j){};}
\foreach \j in {1,...,3}
{\draw [arrow] (v\j) edge[bend left] (w\j);
 \draw [arrow] (a\j) edge[bend left=10] (b\j);
 \draw [arrow] (b\j) edge[bend left=20] (c\j);
 \draw [arrow] (c\j) edge[bend left=10] (a\j);}
\foreach \j in {1,...,3}
{\draw [arrow] (v\j) edge[bend left] (x\j);
 \draw [arrow] (x\j) edge[bend left] (v\j);}
\foreach \j in {1,...,3}
{\draw [arrow] (x\j) edge[bend left] (a\j);
 \draw [arrow] (a\j) edge[bend left] (x\j);}
\foreach \j in {1,...,3}
{\draw[c-vrtx] (-90+30+120*\j:3.4641cm) node {$\vrtx$};
 \draw[c-vrtx] (-90-30+120*\j:3.4641cm) node {$\vrtx$};}
\foreach \j in {1,...,3}
{\draw[c-vrtx] (-90+30+120*\j:3.4641cm) node (d\j) {};
 \draw[c-vrtx] (-90-30+120*\j:3.4641cm) node (e\j) {};}
 \foreach \j in {1,...,3}
{\draw [arrow] (b\j) edge[bend left] (d\j);
 \draw [arrow] (c\j) edge[bend left] (e\j);
 \draw [arrow] (d\j) edge[bend left] (b\j);
 \draw [arrow] (e\j) edge[bend left] (c\j);}
\end{tikzpicture}
\caption{The quotient graphs $\hua{G}_2$
 and $\hua{G}_4$ (orientation omitted).}
\label{fig:G24}
\end{figure}

\subsection{Lifting to the exchange graph}

The exchange graph $\EGp(\Gamma_N A_2)$ may be recovered
\New{as the domain of}
a $\ZZ$-cover $\Pi\colon \widetilde{\hua{G}_N}\to\hua{G}_N$,
sitting inside $\hua{G}_N\times\frac{1}{6}\ZZ$
and determined by grading each edge $e$ by
\[
  \gr(e)=\begin{cases}
  \frac{1}{3}, & \text{if } e\in T_\Tri,\\
  \frac{1}{2}, & \text{if } e\in C_{\Lambda,j}.
  \end{cases}
\]
More precisely, each vertex $v$ of $\hua{G}_N$
lifts to $\Pi^{-1}(v)=\{v\}\times \mathbf{N}_v$,
where $\mathbf{N}_v$ is a coset of $\ZZ$ in $\frac{1}{6}\ZZ$.
\New{
Each edge $e\colon v\to w$ of $\hua{G}_N$
lifts to edges $\widetilde{e}\colon(v,n)\to (w,n+\gr(e))$,
for each $n\in \mathbf{N}_v$.
}

\New{
Observe that $\widetilde{\hua{G}_2}$ is tiled by
edge-oriented hexagons,
formed from the lifts of two adjacent $3$-cycles in $\hua{G}_2$ sharing a vertex, $v$ say.
Thus the hexagons have source and sink at vertices $(v, n)$ and $(v, n+1)$, respectively.
Similarly, $\widetilde{\hua{G}_3}$
is tiled by oriented pentagons (cf. Remark~\ref{rem:gluing})
which are the lifts of an adjacent 3-cycle and 2-cycle,
as illustrated on the left of Figure~\ref{fig:pentagon+square}.
For $N\geq 4$, the tiling of $\widetilde{\hua{G}_N}$ has similar pentagons but also includes squares
which are the lifts of two adjacent 2-cycles,
as illustrated on the right of Figure~\ref{fig:pentagon+square}.
The way these oriented pentagons and squares fit together to make larger intervals
may be seen in the solid parts of Figure~\ref{fig:2}.
}

\begin{figure}\centering
\begin{tikzpicture}[scale=0.47,
  egarrow/.style={->,>=stealth,thick,blue}, 
  vline/.style={dashed,gray},
  c-vrtx/.style={blue}]
\newcommand{\vrtx}{\bullet}
\draw[fill=gray!7,dotted]
    (-9, 2.5) to (7+10, 2.5) to (5+10, -3) to (-11,-3) -- cycle;
\draw[c-vrtx] (-4,0) node (A) {$\vrtx$};
\draw[c-vrtx] (-4,10) node (Ax) {$\vrtx$};
\draw[c-vrtx] (-4,4) node (Axx) {$\vrtx$};
\draw[c-vrtx] (1,1) node (C) {$\vrtx$};
\draw[c-vrtx] (1,6) node (Cx) {$\vrtx$};
\draw[c-vrtx] (0,-2) node (B) {$\vrtx$};
\draw[c-vrtx] (0,8) node (Bx) {$\vrtx$};
\draw[c-vrtx] (-6,0) node (D) {$\vrtx$};
\draw[c-vrtx] (-6,7) node (Dx) {$\vrtx$};
\draw [vline] (A) edge (Axx); \draw [vline] (Axx) edge (Ax);
\draw [vline] (C) edge (Cx); \draw [vline] (Cx) edge (1,10);
\draw [vline] (B) edge (Bx); \draw [vline] (Bx) edge (0,10);
\draw [vline] (D) edge (Dx); \draw [vline] (Dx) edge (-6,10);
\draw [egarrow] (A) edge[bend left] (D);
\draw [egarrow] (D) edge[bend left] (A);
\draw [egarrow] (A) edge[bend left] (C);
\draw [egarrow] (C) edge[bend left] (B);
\draw [egarrow] (B) edge[bend left] (A);
\draw [egarrow] (Axx) edge (Dx);
\draw [egarrow] (Axx) edge (Cx);
\draw [egarrow] (Cx) edge (Bx);
\draw [egarrow] (Bx) edge (Ax);
\draw [egarrow] (Dx) edge (Ax);

\draw[c-vrtx] (-3+12,0) node (A) {$\vrtx$};
\draw[c-vrtx] (-3+12,10) node (Ax) {$\vrtx$};
\draw[c-vrtx] (-3+12,4) node (Axx) {$\vrtx$};
\draw[c-vrtx] (-0+12,0) node (B) {$\vrtx$};
\draw[c-vrtx] (-0+12,7) node (Bx) {$\vrtx$};
\draw[c-vrtx] (-6+12,0) node (D) {$\vrtx$};
\draw[c-vrtx] (-6+12,7) node (Dx) {$\vrtx$};
\draw [vline] (A) edge (Axx);  \draw [vline] (Axx) edge (Ax);
\draw [vline] (B) edge (Bx);  \draw [vline] (Bx) edge (0+12,10);
\draw [vline] (D) edge (Dx);  \draw [vline] (Dx) edge (-6+12,10);
\draw [egarrow] (A) edge[bend left] (D);
\draw [egarrow] (D) edge[bend left] (A);
\draw [egarrow] (A) edge[bend left] (B);
\draw [egarrow] (B) edge[bend left] (A);
\draw [egarrow] (Axx) edge (Dx);
\draw [egarrow] (Axx) edge (Bx);
\draw [egarrow] (Bx) edge (Ax);
\draw [egarrow] (Dx) edge (Ax);
\end{tikzpicture}
\caption{Lifting to a pentagon or a square.}\label{fig:pentagon+square}
\end{figure}


\end{document}